\newtheorem{theorem}{Theorem}[section]
\newtheorem{proposition}[theorem]{Proposition}
\newtheorem{corollary}[theorem]{Corollary}
\newtheorem{lemma}[theorem]{Lemma}
\newtheorem*{ThM}{Main Theorem}
\theoremstyle{definition}
\newtheorem{definition}[theorem]{Definition}
\theoremstyle{remark}
\newtheorem{remark}[theorem]{Remark}
\newcommand{\CC}{{\mathcal C}}
\newcommand{\cD}{{\mathcal D}} 
\newcommand{\CK}{{\mathcal K}}
\newcommand{\CM}{{\mathcal M}}
\newcommand{\CR}{{\mathcal R}}
\newcommand{\CS}{{\mathcal S}}
\newcommand{\CT}{{\mathcal T}}
\newcommand{\CW}{{\mathcal W}}
\newcommand{\SA}{{\mathscr A}}
\newcommand{\SF}{{\mathscr F}}
\newcommand{\ZZ}{{\mathbb Z}}
\newcommand{\NN}{{\mathbb N}}
\newcommand{\QQ}{{\mathbb Q}}
\newcommand{\ch}{{\operatorname{char}\, }}
\newcommand{\Hom}{{\operatorname{Hom}}}
\newcommand{\Lie}{{\operatorname{Lie}}}
\newcommand{\id}{{\operatorname{id}}}
\newcommand{\supp}{{\operatorname{supp}}}
\newcommand{\catmod}{{\operatorname{-mod}}}
\newcommand{\rk}{{{\operatorname{rk}}}}
\newcommand{\GL}{{\operatorname{GL}}}
\newcommand{\Aff}{{\operatorname{Aff}}}
\newcommand{\pr}{{\operatorname{pr}}}
\newcommand{\sign}{{\operatorname{sign}}}
\newcommand{\ul}{\underline}
\newcommand{\w}[2]{#1^{(#2)}}
\renewcommand{\wp}[2]{#1^{(#2)}_{\, +}}
\newcommand{\wm}[2]{#1^{(#2)}_{\, -}}
\numberwithin{equation}{section}
\begin{document}

\title{Duality in the Category of Andersen-Jantzen-Soergel}
\author{Friederike Steglich*}
\address{Department Mathematik, Friedrich-Alexander-Universit\"at Erlangen-N\"urnberg, Cauerstr. 11, 91058 Erlangen, Germany}
\email{steglich@mi.uni-erlangen.de}
\thanks{*supported by the DFG priority program 1388}

\begin{abstract}
In the early 1990's Andersen, Jantzen and Soergel introduced a  category in order to give a combinatorial model for certain  representations of quantum groups at a root of unity and 	simultaneously of Lie algebras of semisimple algebraic groups in positive characteristic. We will describe the behaviour of duality in this category. 
\end{abstract}

\maketitle

\section{Introduction}
In \cite{MR1272539}, Andersen, Jantzen and Soergel introduced a combinatorial model to describe certain representations of the two following situations:
\begin{enumerate}
\item $U_p$ is the quantized enveloping algebra of a complex Lie algebra at a $p$-th root of unity, where $p>1$ is an odd integer (and prime to 3 if the corresponding root system is of type $G_2$),
\item $\Lie\, G_k$, where $G_k$ is a connected, simply connected semisimple algebraic group over an algebraically closed field $k$ of char $p>h$, where $h$ is the Coxeter number of the corresponding root system. 
\end{enumerate}
A relevant consequence of this work is the following. Lusztig's Modular Conjecture (cf. \cite{lusztig1980some}) provides a character formula for representations of connected reductive algebraic groups over an algebraically closed field $k$ with $\ch k =p>h$. In this context, \cite{MR1272539} designates an essential step in proving this conjecture for almost all $p$ by deducing this from the validity of the char 0 analogue, Lusztig's Quantum Conjecture (resulting from \cite{zbMATH00755403} and \cite{KL93}).

This model is realized by a combinatorial category $\CK_k$ and its full subcategory of so-called \emph{special objects} $\CM_k'$.  $\CM_k'$ is then equivalent to $\CR_k$, the category of \emph{$X$-graded special deformed modular representations} of $\Lie \, G_k$ where $X$ is the corresponding weight lattice. The feature of $\CR_k$ is to contain the objects that are of primary interest regarding a character formula, namely the deformed projective objects of the principal block of the category of \emph{$X$-graded restricted representations}. 

The only necessary data derived from $G_k$ to construct a certain $X$-graded $\CM_k'$ is the corresponding root datum. Starting with a unit object, we obtain this category $\CM_k'$ by repeatedly  applying a set of translation functors $\CT_s'$ indexed by the simple affine reflections, taking direct sums and direct summands. However, the main actor of this paper is the category $\CM_k$ that is constructed in the same manner with the only difference lying in the definition of the translation functors $\CT_s$. This definition is related to moment graph theory and due to Fiebig who showed in \cite{MR2726602} that $\CM_k$ and $\CM_k'$ are equivalent categories. The methods in \cite{MR2726602} lead to an improvement in the matter of Lusztig's Modular Conjecture by giving an explicit bound on exceptional primes. Furthermore, a proof of Lusztig's Quantum Conjecture independent from \cite{zbMATH00755403}, \cite{KL93} could be given. 

It is very difficult to describe these special objects intrinsically. A first step towards such a description is -- and that is what is done in this paper -- to study the behaviour of an intuitionally defined duality in the category $\CM_k$ and its parent $\CK_k$. The main result of this paper is
\begin{ThM}[Theorem \ref{selfdualanti}]
The indecomposable special objects in $\CM_k$ that correspond to alcoves in the anti-fundamental box are tilting self-dual up to a shift.
\end{ThM}

\section{Preliminaries}

\subsection{Notation}\label{sec:not}
Let $R$ be a reduced and irreducible root system in a finite dimensional $\QQ$-vector space $V$.  Furthermore, the coroot system of $R$ is denoted by $R^{\vee} \subset V^*=\Hom_{\QQ}(V,\QQ)$, $R^{+}\subset R$ is a subset of chosen positive roots and this determines a set of simple roots $\Delta \subset R^{+} \subset R$. 

For each $\alpha \in R$ and $n \in \ZZ$, an affine transformation in $\Aff (V^*)$ is defined by
\begin{equation*}
	s_{\alpha, n}(v)=v-(\langle\alpha, v\rangle -n)\alpha^{\vee},
\end{equation*}
where $\langle\cdot,\cdot\rangle\colon V \times V^* \to \QQ$ is the canonical pairing. This transformation is a reflection through the hyperplane 
\begin{equation*}
	H_{\alpha, n}=\{v\in V^*\mid \langle\alpha, v\rangle=n\},
\end{equation*}
and therefore, it partitions the dual space into three parts
\begin{equation}\label{hyperpartition}
	V^*=H_{\alpha,n}^-\dot\cup H_{\alpha,n}\dot\cup H_{\alpha,n}^+
\end{equation}
with
\begin{equation*}
H_{\alpha,n}^-=\{v\in V^*\mid \langle\alpha, v\rangle<n\}
\end{equation*}
and
\begin{equation*}
H_{\alpha,n}^+=\{v\in V^*\mid \langle\alpha, v\rangle>n\}. 
\end{equation*}

Let $\tilde{\alpha}$ be the highest root in $R$, i.e. the unique root with the property that for every $\alpha \in R^{+}$, 
\begin{equation*}
	\tilde{\alpha} -\alpha \in\sum_{\beta \in \Delta}\ZZ_{\geq 0} \beta.
\end{equation*}
Then 
\begin{equation*}
	\widehat{\CS}:=\{s_{\alpha,0}\mid \alpha \in \Delta\}\cup \{s_{\tilde{\alpha},1}\}
\end{equation*}
is called the set of \emph{simple affine reflections}. We will distinguish between the \emph{finite Weyl group} $\CW \subset \GL (V^*)$ generated by $\CS:=\{s_{\alpha,0}\mid \alpha \in \Delta\}$ and the \emph{affine Weyl group} $\widehat{\CW}\subset\Aff(V^*)$ generated by $\widehat{\CS}$. As  $(\widehat{\CW},\widehat{\CS})$ is a Coxeter system, one is able to  associate a length function 
\begin{equation*}
l\colon \widehat{\CW} \to \NN.
\end{equation*}
The element of maximal length of the finite Coxeter group $\CW$ is denoted by $w_0$. 

There is a common way to identify elements in $\widehat{\CW}$ with subsets of $V^{*}$. One considers the coarsest partition $\SF$ of $V^{*}$ that refines all partitions of the form \eqref{hyperpartition} for $\alpha \in R^+, n \in \ZZ$. The open components of this partition are known as \emph{alcoves} with the designated \emph{fundamental alcove} 
\begin{equation*}
A_e=\{v\in V^*\mid 0<\langle \alpha, v\rangle <1 \quad\forall \alpha \in R^+\}. 
\end{equation*}
$\widehat{\CW}$ acts simply transitively on the \emph{set of alcoves} $\SA$, and hence there is the bijection
\begin{align*}
\widehat{\CW} &\to \SA\\
w&\mapsto A_w=w.A_e.
\end{align*}
A component $B$ of $\SF$ which is included in exactly one hyperplane $H_{\alpha,n}$ is called \emph{wall}. In this situation, the \emph{type $\alpha(B)$ of} $B$ is the positive root $\alpha$. A wall $B$ is a \emph{wall to} an alcove $A\in \SA$ if $B$ is a subset of the closure of $A$. $B_-$ (resp. $B_+$) denotes the unique alcove whose wall is $B$ and which is contained in the negative (resp. positive) half-space corresponding to $B$. 

For every $\beta \in R^{+}$ there is a bijection $\beta \uparrow\cdot$ on $\SF$ mapping alcoves to alcoves and walls to walls. For $F\in \SF$ the component $\beta\uparrow F$ is given by $s_{\beta,m}.F$ where $m$ is the smallest integer such that $F\subset H^-_{\beta,m}\cup H_{\beta,m}$. The inverse of $\beta\uparrow\cdot$ is denoted by $\beta \downarrow\cdot$.

To each simple affine reflection $s\in \widehat{\CS}$ we associate the wall $A_{s,e}$ of the fundamental alcove $A_e$ which is contained in the closure of $A_s$, too. Thereby, we obtain a mapping 
\begin{align*}
\SA \to \{\text{walls}\}\\
A\mapsto \w{A}{s}
\end{align*}
where $\w{A}{s}$ is the unique wall which lies in the $\widehat{\CW}$-orbit of $A_{s,e}$ and intersects the closure of $A$. In case we need the negative alcove to a wall of an alcove, namely $(\w{A}{s})_-$ we will simply denote it by $\wm{A}{s}$ and analougously $(\w{A}{s})_+=\wp{A}{s}$. This should not cause any confusion. For having a more detailed view on alcoves, \cite{humphreys1990reflection} is recommended. 

Take an algebraically closed field $k$ with $\ch k\neq 2$ and, if $R$ is of type $G_2$, $\ch k \neq 3$. Let 
\begin{equation*}
X=\{v\in V\mid \langle v, \alpha^{\vee}\rangle \in \ZZ \quad \forall \alpha \in R \}
\end{equation*}
and let $S_k$ be the symmetric algebra of $X_k:=X \otimes_{\ZZ}k$. We define the following $S_k$-subalgebras of the quotient field of $S_k$,
\begin{equation*}
S_k^{\emptyset}=S_k[\alpha^{-1}\mid\alpha\in R^+]
\end{equation*}
and 
\begin{equation*}
S_k^{\beta}=S_k[\alpha^{-1}\mid\alpha \in R^+, \alpha\neq \beta]
\end{equation*} 
for $\beta \in R^+$.
All those algebras carry a compatible $\ZZ$-grading which is induced by setting the degree of $X$ to $2$.

\subsection{Alcoves}

In the course of this article we will need to be familiar with the behaviour of alcoves and their walls in connection with the action of the affine Weyl group. The necessary properties are collected in this paragraph.

\begin{lemma}[\cite{MR2726602}, Lemma 5.1]\label{wallcomb} Let $s\in\widehat{\CS}$ and $\beta\in R^+$. Choose $A\in\SA$ and $B\in \widehat{\CW}.A_{s,e}$.
\begin{enumerate}
\item[a)] If $\beta\uparrow B=B$, then $\beta\uparrow B_-=B_+$. 
\item[b)] If $\beta\uparrow B\ne B$, then $\{\beta\uparrow B_-, \beta\uparrow B_+\}=\{(\beta\uparrow B)_-, (\beta\uparrow B)_+\}$.
\item[c)] If $\beta\uparrow \w{A}{s}=\w{A}{s}$ and $A=\wm{A}{s}$, then $\w{(\beta\uparrow A)}{s}=\w{A}{s}$ and $\beta\uparrow A=\wp{A}{s}$.
\item[d)] If $\beta\uparrow \w{A}{s}\ne \w{A}{s}$, then $\w{(\beta\uparrow A)}{s}=\beta\uparrow \w{A}{s}$.
\end{enumerate}
\end{lemma}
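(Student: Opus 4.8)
The plan is to prove Lemma~\ref{wallcomb} by reducing everything to statements about a single root direction $\beta$ and the reflection $s_{\beta,m}$ that realizes $\beta\uparrow\cdot$ locally.

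First I would recall the basic mechanism behind $\beta\uparrow\cdot$. For a component $F\in\SF$, let $m=m(F)$ be the smallest integer with $F\subset H^-_{\beta,m}\cup H_{\beta,m}$, so that $\beta\uparrow F=s_{\beta,m}.F$. For an alcove $A$, the relevant $m$ is the unique integer with $A\subset H^-_{\beta,m}\cap H^+_{\beta,m-1}$, i.e. $m-1<\langle\beta,v\rangle<m$ on $A$; and $\beta\uparrow A$ is simply the alcove obtained by reflecting $A$ across $H_{\beta,m}$, which sits ``one strip up'' so that $m<\langle\beta,v\rangle<m+1$ on it (unless $A$ already touches $H_{\beta,m}$, in which case the reflection fixes the wall). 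For a wall $B$ of type $\alpha(B)$: if $\alpha(B)\neq\beta$ then $B$ lies strictly between two consecutive $\beta$-hyperplanes and $\beta\uparrow B$ reflects it up one strip; if $\alpha(B)=\beta$, then $B\subset H_{\beta,n}$ for some $n$ and one computes that $m(B)=n$, so $\beta\uparrow B=s_{\beta,n}.B=B$. Thus \textbf{parts (a) and (b) are about the geometry of $\beta$-strips}, and I would handle them first, independently of $s$.

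For (a): if $\beta\uparrow B=B$ then by the above $B\subset H_{\beta,n}$ with $\alpha(B)=\beta$; the two alcoves $B_-$ and $B_+$ are the reflections of one another through $H_{\beta,n}$, with $B_-\subset H^-_{\beta,n}\cap H^+_{\beta,n-1}$ and $B_+\subset H^+_{\beta,n}\cap H^-_{\beta,n+1}$. Then $m(B_-)=n$, so $\beta\uparrow B_-=s_{\beta,n}.B_-=B_+$, which is the claim. For (b): if $\beta\uparrow B\neq B$, then $\alpha(B)=\beta'\neq\beta$ and $B$, $B_-$, $B_+$ all lie in the open strip $H^-_{\beta,m}\cap H^+_{\beta,m-1}$ for a common $m$; hence $\beta\uparrow\cdot$ acts on all three by the single reflection $s_{\beta,m}$, which commutes with the operation ``pass from a wall to its two neighbouring alcoves'' (it is an affine isometry preserving $\SF$ and the half-spaces attached to $H_{\beta',n}$, or it swaps the two half-spaces depending on the sign of $\langle\beta,\beta'^{\vee}\rangle$ — either way it maps $\{B_-,B_+\}$ to $\{(\beta\uparrow B)_-,(\beta\uparrow B)_+\}$). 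The only subtlety, and what I expect to be the \emph{main obstacle}, is bookkeeping the $\pm$ signs: $s_{\beta,m}$ need not send $B_-$ to $(\beta\uparrow B)_-$; it can swap them. This is exactly why (b) is phrased as an equality of unordered pairs rather than a matching of $\mp$ to $\mp$, and I would make the sign analysis explicit via $\langle\beta,\beta'^{\vee}\rangle$ rather than brush past it.

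For (c) and (d) I would feed (a) and (b) into the definition of $\w{A}{s}$. Recall $\w{A}{s}$ is the unique wall of $A$ lying in the orbit $\widehat{\CW}.A_{s,e}$, and $B=\w{A}{s}$ is a wall of both $\wm{A}{s}$ and $\wp{A}{s}$. In (c) we assume $\beta\uparrow B=B$ with $B=\w{A}{s}$ and $A=\wm{A}{s}=B_-$; then part (a) gives $\beta\uparrow A=\beta\uparrow B_-=B_+=\wp{A}{s}$. Since $\beta\uparrow\cdot$ fixes $B$, the wall $B$ is still a wall of $\beta\uparrow A$ and still lies in the same orbit $\widehat{\CW}.A_{s,e}$; by uniqueness $\w{(\beta\uparrow A)}{s}=B=\w{A}{s}$. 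In (d) we assume $\beta\uparrow B\neq B$ with $B=\w{A}{s}$; part (b) says $\beta\uparrow A\in\{(\beta\uparrow B)_-,(\beta\uparrow B)_+\}$, so $\beta\uparrow B$ is a wall of $\beta\uparrow A$. It remains to see $\beta\uparrow B\in\widehat{\CW}.A_{s,e}$: indeed $\beta\uparrow B=s_{\beta,m}.B$ with $s_{\beta,m}\in\widehat{\CW}$ and $B\in\widehat{\CW}.A_{s,e}$, so $\beta\uparrow B\in\widehat{\CW}.A_{s,e}$; uniqueness of the $s$-wall of $\beta\uparrow A$ then forces $\w{(\beta\uparrow A)}{s}=\beta\uparrow B=\beta\uparrow\w{A}{s}$. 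The one point needing care here is that $\beta\uparrow A$ has \emph{exactly one} wall in the orbit $\widehat{\CW}.A_{s,e}$ (so that ``the $s$-wall'' is well defined for it), which is part of the setup recalled before the lemma; granting that, the argument closes.

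Overall the proof is short once the right picture is fixed: (a) and (b) are elementary statements about consecutive $\beta$-hyperplanes and the reflection between them, and (c), (d) are immediate consequences obtained by specializing the wall $B$ to $\w{A}{s}$ and invoking the uniqueness characterization of $\w{\cdot}{s}$. The genuinely delicate bit is the sign/half-space bookkeeping in (b), where the reflection may interchange the two alcoves adjacent to a wall; I would isolate that into a small sub-lemma about how $s_{\beta,m}$ acts on half-spaces $H^\pm_{\beta',n}$ in terms of $\operatorname{sign}\langle\beta,\beta'^{\vee}\rangle$, and then everything else is formal.
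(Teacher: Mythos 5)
Your proposal is correct, but note that the paper does not prove this lemma at all: it is imported verbatim as Lemma~5.1 of \cite{MR2726602}, so there is no in-paper argument to compare against. Your self-contained proof is sound and follows the natural route: the observation that $\beta\uparrow B=B$ holds precisely when $\alpha(B)=\beta$ (equivalently $B\subset H_{\beta,n}$), that otherwise $B$, $B_-$, $B_+$ all lie in one open $\beta$-strip so that $\beta\uparrow\cdot$ acts on all three by the same reflection $s_{\beta,m}$, and that (c), (d) then follow from (a), (b) by the uniqueness of the wall of $\beta\uparrow A$ in the orbit $\widehat{\CW}.A_{s,e}$. One simplification: the sign bookkeeping you flag as the delicate point in (b) is not actually needed. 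Since $s_{\beta,m}$ is an affine bijection preserving $\SF$ and the face relation, $\{s_{\beta,m}.B_-,s_{\beta,m}.B_+\}$ is exactly the set of the two alcoves admitting $s_{\beta,m}.B=\beta\uparrow B$ as a wall, and that set is by definition $\{(\beta\uparrow B)_-,(\beta\uparrow B)_+\}$; the question of which element maps to which (governed by the sign of $s_{\beta}(\alpha(B))$) only matters if one wants the ordered version, which the lemma does not claim. The only hypothesis you should make explicit rather than implicit is that $A\in\{(\w{A}{s})_-,(\w{A}{s})_+\}$, i.e.\ that $\w{A}{s}$ is genuinely contained in $\overline{A}$; this is part of the setup of the map $A\mapsto\w{A}{s}$ and is what lets you apply (a) and (b) with $B=\w{A}{s}$ in parts (c) and (d).
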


\begin{remark}\label{updown}
If $\w{A}{s}$ is of type $\beta$ with $\beta \uparrow A=s_{\beta,n}.A$, then $\w{(\beta \uparrow A)}{s}=(s_{\beta, n}.A)^{(s)}=s_{\beta, n}.\w{A}{s}$ is of type $\beta$, too, and vice versa. In formula this means
\begin{equation*}
\beta \uparrow \w{A}{s}=\w{A}{s} \iff \beta\uparrow\w{(\beta \uparrow A)}{s}=\w{(\beta \uparrow A)}{s}.
\end{equation*}

Let $\beta \uparrow \w{A}{s}=\w{A}{s}$ and $A=\wp{A}{s}$. In particular, it follows from  Lemma \ref{wallcomb}.a) that 
\begin{equation*}
	\beta \downarrow A= \wm{A}{s}
\end{equation*}
and 
\begin{equation*}
\beta \uparrow A =\wm{(\beta \uparrow A)}{s}.
\end{equation*}
\end{remark}

Below, there are two more lemmas on alcove calculation with an involved finite Weyl group element. Define $w(\beta)^+$ to be the unique element in $\{w(\beta), -w(\beta)\}\cap R^+$.
\begin{lemma}\label{kipp:wb}
Let $\beta \in R^{+}, A \in \SA$ and $w \in \CW$. Then
\begin{equation*}
	w.(\beta\uparrow A) =
	\begin{cases}
		w(\beta)^+\uparrow w.A & \text{if }w(\beta) \in R^{+},\\
		w(\beta)^+\downarrow w.A & \text{if }w(\beta) \in R^{-}.
	\end{cases}
\end{equation*}
\end{lemma}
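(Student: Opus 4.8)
The plan is to reduce the statement to a computation about the action of $w$ on the defining hyperplane data of the arrow $\beta\uparrow\cdot$, using the explicit formula $\beta\uparrow F = s_{\beta,m}.F$ where $m$ is the least integer with $F\subset H^-_{\beta,m}\cup H_{\beta,m}$. First I would record the equivariance of the hyperplane partition under $\CW\subset\GL(V^*)$: for $w\in\CW$ one has $w.H_{\alpha,n}=H_{w\alpha,n}$ (because $\langle w\alpha, w.v\rangle = \langle\alpha,v\rangle$), and likewise $w.H^\pm_{\alpha,n}=H^\pm_{w\alpha,n}$. Note that $\langle\beta,\cdot\rangle$ and $\langle w(\beta),\cdot\rangle$ differ by the sign $\varepsilon$ with $w(\beta)=\varepsilon\, w(\beta)^+$, $\varepsilon=\pm1$; so $H_{\beta,n}=H_{w(\beta)^+,\varepsilon n}$ after applying $w$, and the two open half-spaces get swapped exactly when $\varepsilon=-1$.

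Next I would handle the two cases. Suppose $\beta\uparrow A = s_{\beta,m}.A$ with $m$ minimal so that $A\subset H^-_{\beta,m}\cup H_{\beta,m}$. Applying $w$ and using $w s_{\beta,m} w^{-1} = s_{w\beta,m}$ (a standard conjugation identity for affine reflections, which one checks directly from the formula for $s_{\alpha,n}$), we get $w.(\beta\uparrow A) = s_{w\beta,m}.(w.A)$. In the case $w(\beta)\in R^+$, so $\varepsilon=1$ and $w\beta=w(\beta)^+$: the condition $A\subset H^-_{\beta,m}\cup H_{\beta,m}$ transforms into $w.A\subset H^-_{w(\beta)^+,m}\cup H_{w(\beta)^+,m}$, and minimality of $m$ is preserved because $w$ is a bijection on the relevant family of hyperplanes; hence $s_{w\beta,m}.(w.A) = w(\beta)^+\uparrow(w.A)$ by definition. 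In the case $w(\beta)\in R^-$, so $w\beta = -w(\beta)^+$ and $s_{w\beta,m}=s_{w(\beta)^+,-m}$: now $w.A\subset H^+_{w(\beta)^+,-m}\cup H_{w(\beta)^+,-m}$, i.e. $-m$ is (I claim) the \emph{largest} integer $m'$ with $w.A\subset H^+_{w(\beta)^+,m'}\cup H_{w(\beta)^+,m'}$, which is precisely the defining condition for $w(\beta)^+\downarrow(w.A)$ (the inverse of $\uparrow$, as stated in the excerpt). So $w.(\beta\uparrow A)=w(\beta)^+\downarrow(w.A)$, as required.

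The main obstacle I expect is the bookkeeping around minimality: I must verify that the integer $m$ that is minimal for $\beta$ and $A$ is exactly the one that is minimal (resp. maximal, in the $\downarrow$ direction) for $w(\beta)^+$ and $w.A$, rather than being off by one or collapsing under the sign flip. The clean way to see this is to observe that the set $\{\,n\in\ZZ \mid A\subset H^-_{\beta,n}\cup H_{\beta,n}\,\}$ is a ray in $\ZZ$ bounded below, that $w$ carries the family $\{H_{\beta,n}\}_{n\in\ZZ}$ bijectively (and order-compatibly, up to the sign $\varepsilon$) onto $\{H_{w(\beta)^+,n}\}_{n\in\ZZ}$, and that $v\mapsto w.v$ preserves containment in half-spaces in the transformed way; then minimality is automatic. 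Once this is in place, and one has cross-checked against Remark \ref{updown} and the compatibility with Lemma \ref{wallcomb} in a small example (say $A=A_e$ and $\beta$ simple), the argument is essentially a change of variables and no genuinely new idea is needed.
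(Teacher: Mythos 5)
Your proposal is correct and follows essentially the same route as the paper: write $\beta\uparrow A=s_{\beta,m}.A$, conjugate the affine reflection by $w$ (using $ws_{\beta,m}w^{-1}=s_{w(\beta),m}=s_{w(\beta)^+,\pm m}$), and track how the half-space condition characterizing $m$ transforms under $w$, splitting into cases according to the sign of $w(\beta)$. The only cosmetic difference is that in the negative case the paper verifies $w(\beta)^+\uparrow w.(\beta\uparrow A)=w.A$ and then inverts, whereas you directly characterize $\downarrow$ by a maximality condition; both amount to the same change of variables.
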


\begin{proof}
Let $n \in \ZZ$ be such that 
\begin{equation}\label{prel:betaup}
	\beta \uparrow A =s_{\beta,n}.A
\end{equation}
and hence 
\begin{equation*}
w.(\beta \uparrow A) = (w s_{\beta, n}).A= s_{w(\beta),n}.(w.A) = s_{-w(\beta),-n}.(w.A).
\end{equation*}
Equation \eqref{prel:betaup} is equivalent to 
	\begin{equation*}
\forall N\geq n:\ A \subset H_{\beta,N}^{-} \quad\text{ and }
\quad \forall M < n: \, A\subset H_{\beta,M}^{-}.
	\end{equation*}
But
\begin{equation*}
A \subset H_{\beta,N}^{-} \iff w.A \subset w(H_{\beta,N}^{-})= H_{w(\beta),N}^{-}=H_{-w(\beta),-N}^{+}.
\end{equation*}
If $w(\beta)\in R^+$, this leads to $w.(\beta \uparrow A)= s_{w(\beta),n}.(w.A)=w(\beta)^+ \uparrow w.A$. 
Furthermore, 
\begin{align*}
A \subset H_{\beta,N}^{-}& \\
\iff\quad &w.(\beta \uparrow A) \subset w s_{\beta, n} (H_{\beta,N}^{-})\\
&= w(H_{\beta,2n-N}^{+})=H_{-w(\beta),N-2n}^{-}.
\end{align*}
Hence for $w(\beta)\in R^-$, we obtain
\begin{equation*}
w(\beta)^+ \uparrow w.(\beta \uparrow A)=s_{-w(\beta), -n}w.(\beta \uparrow A)= w.A.
\end{equation*}
This yields the claim. 
\end{proof}

\begin{lemma}\label{kipp:arithmetik}
	Let $w\in \CW$ then
	\begin{enumerate}
	\item[a)] $\{\wm{(w.A)}{s}, \wp{(w.A)}{s}\}=\{w.(\wm{A}{s}),w.(\wp{A}{s})\}$,
	\item[b)] $\beta \uparrow \w{A}{s} =\w{A}{s}$ if and only if $w(\beta)^+\uparrow
	\w{(w.A)}{s}=\w{(w.A)}{s}$. 
	\end{enumerate}
\end{lemma}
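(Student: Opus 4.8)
The plan is to deduce both parts from a single equivariance property of the assignment $A\mapsto \w{A}{s}$: namely, $\w{(v.A)}{s}=v.\w{A}{s}$ for every $v\in\widehat{\CW}$, hence in particular for every $w\in\CW$ (the latter being a subgroup of $\widehat{\CW}$, as it is generated by the $s_{\alpha,0}$, $\alpha\in\Delta$). To establish this I would argue straight from the definition: $\w{A}{s}$ is the \emph{unique} wall that lies in the orbit $\widehat{\CW}.A_{s,e}$ and meets $\overline{A}$. For $v\in\widehat{\CW}$, the set $v.\w{A}{s}$ is again a wall, it again lies in $\widehat{\CW}.A_{s,e}$ because $v$ permutes that orbit, and it meets $v(\overline{A})=\overline{v.A}$; by uniqueness it must coincide with $\w{(v.A)}{s}$.

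Granting this, part a) is immediate. By construction $\wm{A}{s}$ and $\wp{A}{s}$ are exactly the two alcoves that have $\w{A}{s}$ as one of their walls. Since $w$ is a linear automorphism of $V^*$ that permutes the hyperplanes $H_{\alpha,n}$, it acts as a bijection on $\SF$ carrying alcoves to alcoves and walls to walls and preserving the relation ``$B$ is a wall to $A$''; hence $\{w.\wm{A}{s},\,w.\wp{A}{s}\}$ is precisely the set of the two alcoves that have $w.\w{A}{s}$ as one of their walls. By the equivariance $w.\w{A}{s}=\w{(w.A)}{s}$, so these two alcoves are $\wm{(w.A)}{s}$ and $\wp{(w.A)}{s}$ by definition. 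This gives the asserted equality of unordered pairs --- only unordered, since $w$ need not respect the decomposition \eqref{hyperpartition}.

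For part b) I would first record the elementary observation that, for a wall $B$, one has $\beta\uparrow B=B$ if and only if $B\subseteq H_{\beta,n}$ for some $n\in\ZZ$, that is, $B$ is a wall of type $\beta$. Indeed $\beta\uparrow B=s_{\beta,m}.B$ with $m$ minimal such that $B\subseteq H^-_{\beta,m}\cup H_{\beta,m}$; since $B$, being a component of $\SF$, lies entirely in one of the three parts of \eqref{hyperpartition}, the equality $s_{\beta,m}.B=B$ forces $B\subseteq H_{\beta,m}$, and the converse is clear. Assume now $\beta\uparrow\w{A}{s}=\w{A}{s}$ and pick $n$ with $\w{A}{s}\subseteq H_{\beta,n}$. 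Applying $w$ and using
\[
	w.H_{\beta,n}=H_{w(\beta),n}=H_{w(\beta)^+,\,\varepsilon n},
\]
where $\varepsilon\in\{1,-1\}$ is chosen so that $w(\beta)^+=\varepsilon\,w(\beta)$ and we have used $H_{\gamma,n}=H_{-\gamma,-n}$, together with the equivariance $\w{(w.A)}{s}=w.\w{A}{s}$, we obtain $\w{(w.A)}{s}\subseteq H_{w(\beta)^+,\,\varepsilon n}$, whence $w(\beta)^+\uparrow\w{(w.A)}{s}=\w{(w.A)}{s}$. The converse follows by applying the implication just proved to $w^{-1}\in\CW$, the positive root $w(\beta)^+$ and the alcove $w.A$, noting that $(w^{-1}(w(\beta)^+))^+=\beta$ since $w^{-1}(w(\beta)^+)\in\{\beta,-\beta\}$.

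Most of this is the same kind of bookkeeping with affine reflections as in the proof of Lemma \ref{kipp:wb}; the one step that really carries the argument is the equivariance $\w{(w.A)}{s}=w.\w{A}{s}$ --- i.e. that $w$ preserves the orbit $\widehat{\CW}.A_{s,e}$ together with the incidence to closures of alcoves --- and once that is in hand both parts are short. The only mild subtlety is keeping track of the sign $\varepsilon$ (and of the identity $H_{\gamma,n}=H_{-\gamma,-n}$) when $w(\beta)$ turns out to be negative.
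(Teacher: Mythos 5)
Your proof is correct and follows essentially the same route as the paper: part a) via the observation that both unordered pairs consist of the two alcoves adjacent to the common wall $w.\w{A}{s}=\w{(w.A)}{s}$, and part b) by transporting the containment $\w{A}{s}\subset H_{\beta,n}$ under $w$ and handling the converse by substituting $w^{-1}$. The paper takes the equivariance $w.\w{A}{s}=\w{(w.A)}{s}$ for granted, whereas you justify it from the uniqueness in the definition of $\w{A}{s}$; this is a harmless (indeed welcome) addition.
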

\begin{proof}
\begin{enumerate}
	\item[a)] The common wall of the left-hand set is $\w{(w.A)}{s}$ and $w.\w{A}{s}$ is the common wall of the right-hand set. But $w.\w{A}{s}=\w{(w.A)}{s}$ and hence, both sets coincide. (This is even true for $w\in \widehat{\CW}$.)
	\item[b)] Let $\beta \uparrow \w{A}{s} =\w{A}{s}$ with  $\w{A}{s} \subset H_{\beta, n}$. Applying $w$ yields  $w.\w{A}{s}=\w{(w.A)}{s}\subset H_{w(\beta), n}$ and therefore $w(\beta)^+\uparrow \w{(w.A)}{s}=\w{(w.A)}{s}$. 
	For the if-part we simply set $w^{-1}$ into the role of $w$. 
	\end{enumerate}
\end{proof}

\section{The Category of Andersen-Jantzen-Soergel}

\subsection{The Category $\CK_k$}

As a first step, we will define the category $\CK_k$. Objects in $\CK_k$ are of the form $M=(\{M(A)\}_{A\in \SA}, \{M(A,\beta)\}_{A\in \SA, \beta \in R^{+}})$, where
\begin{enumerate}
	\item for all $A\in \SA$: $M(A)$ is a $\ZZ$-graded $S^{\emptyset}_k$-module, 
	\item for all $A\in \SA, \beta \in R^{+}$: $M(A,\beta)$ is an $S^{\beta}_k$-submodule of $M(A)\oplus M(\beta \uparrow A)$ with compatible grading.
\end{enumerate}
A morphism $f\colon M \to N$ in $\CK_k$ is given by $f= (f_A)_{A\in \SA}$, where
\begin{enumerate}
	\item each $f_A\colon M(A) \to N(A)$ is a graded $S^{\emptyset}_k$-module homomorphism of degree $0$, 
	\item for all $A \in \SA$ and $\beta \in R^{+}$ the homomorphism $f_A\oplus f_{\beta\uparrow A}$ maps the $S^{\beta}_k$-submodule $M(A,\beta)$ of \mbox{$M(A)\oplus M(\beta \uparrow A)$} to $N(A, \beta)\subseteq N(A) \oplus N(\beta \uparrow A)$.
\end{enumerate}

\subsubsection*{Examples}
We want to introduce two examples of objects in $\CK_k$. The first, called the \emph{unit object}, is defined by
\begin{align*}
P_0(A)& :=  
\begin{cases} S^{\emptyset}_k & \text{if }A=A_e, \\
0 & \text{else},
\end{cases}\\
P_0(A,\beta)& := \begin{cases} S^{\beta}_k & \text{if }A\in \{A_e,\beta \downarrow A_e\}, \\
0 & \text{else}.
\end{cases}\\
\end{align*}

To describe the second example, we need to define a set $\SA^{-}_{\beta}\subset \SA$ for any positive root $\beta$. This is the set of those alcoves inside the negative half-space of $\beta$ that additionally correspond to elements of the finite Weyl group. In formula this means
\begin{equation}\label{abminus}
	\SA^{-}_{\beta}=\{A \in \SA\mid A=A_w \text{ with } w\in \CW, A \subset H_{\beta}^{-}\}.
\end{equation}
For a graphical description, see Figure \ref{fig:cases2}. 

\begin{figure}[htbp] 
  \centering
  \includegraphics[width=0.4\textwidth]{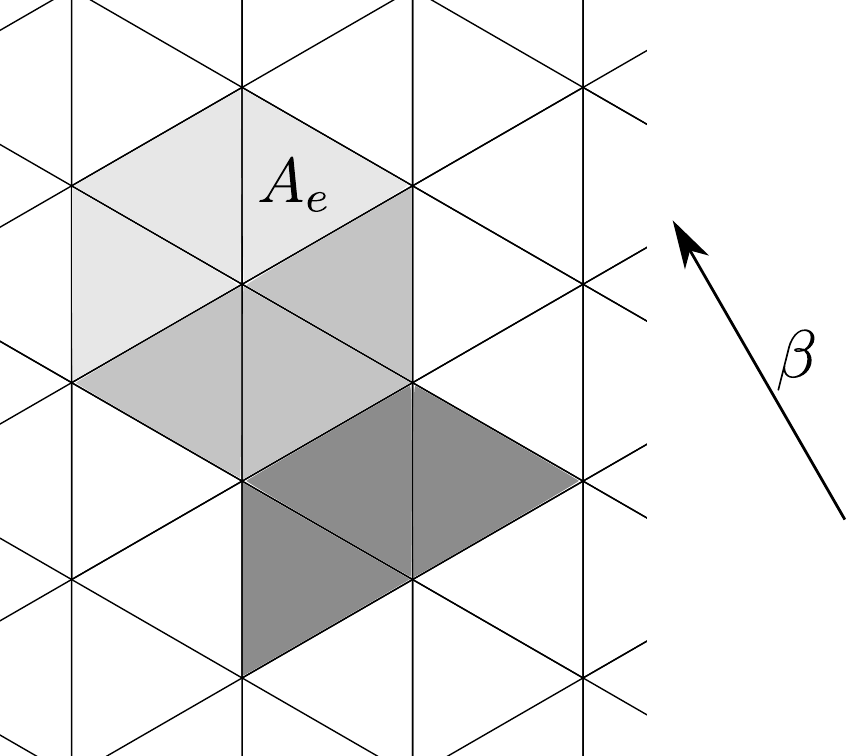}
  \caption{The following subsets of $\SA$, here of type $\tilde{A}_2$:}
  Color key \quad
\includegraphics{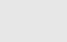} \quad $A\in \SA_+^{\beta}$ \qquad
\includegraphics{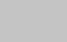} \quad  $A\in \SA_-^{\beta}$ \qquad
\includegraphics{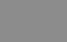} \quad  $A\in \beta\downarrow \SA_-^{\beta}$  
  \label{fig:cases2}
\end{figure}

Analogously, we define the set $\SA^{+}_{\beta}$. Please notice that 
\begin{equation*}
	\SA^{+}_{\beta}=\beta \uparrow \SA^{-}_{\beta}.
\end{equation*}
We define the object $Q_0 \in \CK_k$ as follows. 
\begin{align*}
Q_0(A) & :=  
\begin{cases} S^{\emptyset}_k & \text{if $A=A_w$ with } w \in \CW, \\
0 & \text{else},
\end{cases}\\
Q_0(A,\beta) & := 
\begin{cases} \{(\beta x+y, y)\mid x,y \in S^{\beta}_k\} & \text{ for } A \in \SA^{-}_{\beta},\\ 
\beta S^{\beta}_k & \text{ for } A \in \beta\uparrow \SA^{-}_{\beta},\\
S^{\beta}_k & \text{ for } A \in \beta \downarrow \SA^{-}_{\beta}, \\
0 & \text{ else}.
\end{cases}
\end{align*}
$Q_0$ is indeed an object in $\CK_k$ as there are the following embeddings:
\begin{eqnarray*}
	\forall A \in \SA^{-}_{\beta} \colon &\{(\beta x+y, y)\mid x,y \in S^{\beta}_k\}&\hookrightarrow S^{\emptyset}_k \oplus S^{\emptyset}_k,\\
	\forall A \in \beta \uparrow\SA^{-}_{\beta} \colon &\beta S^{\beta}_k\oplus 0&\hookrightarrow S^{\emptyset}_k \oplus 0,\\
	\forall A \in \beta \downarrow \SA^{-}_{\beta} \colon &0\oplus S^{\beta}_k &\hookrightarrow 0\oplus S^{\emptyset}_k.
\end{eqnarray*}

\subsection{The Categories $\CM_k$ and $\CM_k^{\circ}$}

\subsubsection{Translation Functors}
In the work \cite{MR1272539} by Andersen, Jantzen and Soergel, a set of translation functors indexed by the affine simple roots was introduced in order to span a subcategory of $\CK_k$. In \cite{MR2726602} it was then shown that one is able to replace those original translation functors by different translation functors that yield an equivalent subcategory. Fiebig's translation functors are motivated by moment graph theory. We will work with the translation functors by Fiebig. 

The \emph{translation functor $\CT_s\colon \CK_k \to \CK_k$ associated with} a simple affine reflection $s\in \widehat{\CS}$ is given by
\begin{align*}
\CT_s M (A) &= M(\wm{A}{s}) \oplus M(\wp{A}{s}),\\
\CT_s M (A,\beta) & =  
\begin{cases} 
	\{(\beta x + y,y)\mid x,y \in M(A,\beta)\} & \text{if $\beta \uparrow \w{A}{s}=\w{A}{s}$}\\
	&\quad \text{and $A=\wm{A}{s}$},\\
	\beta M(\beta \downarrow A,\beta) \oplus M(\beta \uparrow A,\beta) & \text{if $\beta \uparrow \w{A}{s}=\w{A}{s}$}\\
	&\quad \text{and $A=\wp{A}{s}$},\\
	M(\wm{A}{s},\beta) \oplus M(\wp{A}{s},\beta)&\text{if }\beta \uparrow \w{A}{s}\neq\w{A}{s},
\end{cases}
\end{align*}
where $A\in \SA$ and $\beta \in R^+$. See Figure \ref{fig:cases} for a graphical description of the three different cases appearing in the definition of $\CT_s M(A,\beta)$. The translation functors are well-defined due to Lemma \ref{wallcomb} and Remark \ref{updown}. For complete description of $\CT_s$, we must specify how the $S^{\beta}_k$-submodules $\CT_s M (A,\beta)$ embed into $\CT_s M (A)\oplus \CT_s M (\beta \uparrow A)$. If $\beta \uparrow \w{A}{s}=\w{A}{s}, A= \wm{A}{s}$, the first entry of a pair in $\CT_s M (A,\beta)$ maps to $\CT_s M (A)$ and the second to $\CT_s M(\beta \uparrow A)$. If $\beta \uparrow \w{A}{s}=\w{A}{s}, A= \wp{A}{s}$, the first direct summand of $\CT_s M (A, \beta)$ lives inside $\CT_s M (A)$ and the second inside $\CT_s M (\beta \uparrow A)$. For $\beta \uparrow \w{A}{s}\neq \w{A}{s}$, the submodule $M(\wm{A}{s},\beta)$ lies crosswise inside both $\CT_s M (A)$ and $\CT_s M (\beta \uparrow A)$ and analogously $M(\wp{A}{s},\beta)$ does. 

\begin{figure}[htbp] 
  \centering
     \includegraphics[width=\textwidth]{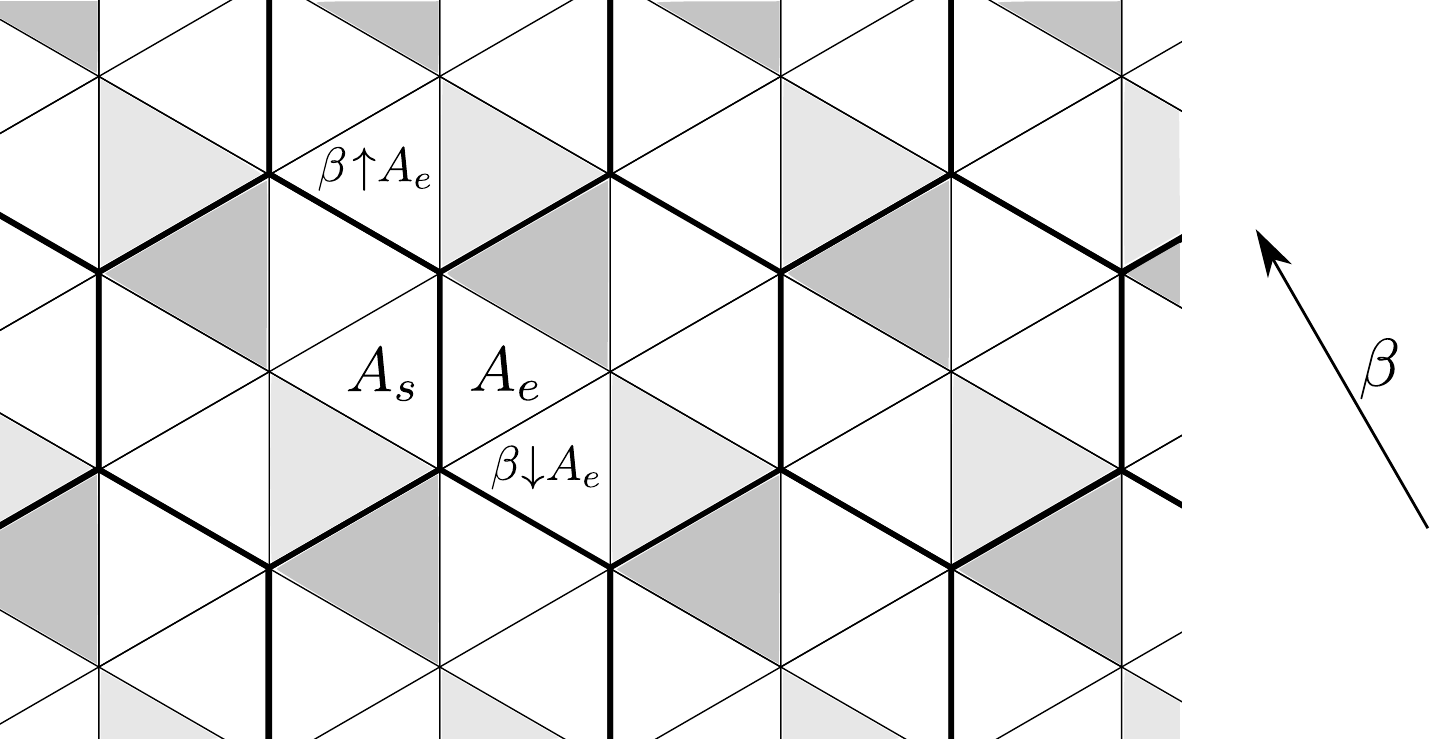}
  \caption{Example of an alcove pattern, here of type $\tilde{A}_2$}
  \label{fig:cases}
\flushleft The color key of the three different cases:
  \begin{enumerate}
  \item \includegraphics{artcp1.pdf}\qquad $\beta \uparrow \w{A}{s}=\w{A}{s}, A= \wm{A}{s}$,
  \item \includegraphics{artcp2.pdf}\qquad $\beta \uparrow \w{A}{s}=\w{A}{s}, A= \wp{A}{s}$,
  \item \includegraphics{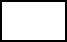}\qquad $\beta \uparrow \w{A}{s}\neq \w{A}{s}$ .
  \end{enumerate}
\end{figure}

For an integer $l$ we define the \emph{shift functor} 
\begin{equation*}
\{l\}\colon \CK_k \to \CK_k
\end{equation*}
to be the functor that shifts an object $M$ in all components $M(A)$ and $M(A,\beta)$ by $l$. Here, the homogeneous elements of degree $n$ of an $\{l\}$-shifted graded $S^{\emptyset}_k$- resp. $S^{\beta}_k$-module are given by 
\begin{equation*}
	M\{l\}_{n}=M_{n-l}.
\end{equation*}
The shift functor is certainly not superfluous. Indeed each $A$-com\-ponent separately is isomorphic to the $\{-2\}$-shifted component itself, where the isomorphism is given by a multiplication with an $\alpha \in R^+$ that is a unit in $S^{\emptyset}_k$. But this does not yield a global isomorphism on the whole object in general as $\alpha$ is not invertible in $S^{\alpha}_k$.

\subsubsection{Special objects}
An object $M \in \CK_k$ is called \emph{special} if it is isomorphic to a direct summand of a direct sum of objects of the form
\begin{equation}\label{eq:bottsamelson}
	\CT_{s_r}\circ \ldots\circ \CT_{s_1}(P_0)\{n\}, 
\end{equation} 
where $s_1,\ldots, s_r$ is a sequence in $\widehat{\CS}$ and $n\in \ZZ$. \emph{Bott-Samelson objects} are objects of the form as in \eqref{eq:bottsamelson}.
\begin{definition}
	The full subcategory of $\CK_k$ given by special objects is denoted by $\CM_k$.
\end{definition}

\begin{remark}
We will verify lateron that $Q_0$ appears as a direct summand of $\CT_{s_j}\circ\ldots\circ\CT_{s_1}(P_0)$ where $w_0=s_1\ldots s_j$ is a reduced expression. Since the proof needs further results we will hand it in later (see below, Proposition  \ref{q0summand}). 
Hence we can define a full subcategory $\CM_k^{\circ}\subset\CM_k$ consisting of objects isomorphic to a direct summand of a direct sum of objects of the form
\begin{equation}\label{bottsamlike}
	\CT_{s_r}\circ\ldots\circ\CT_{s_1}(Q_0)\{n\}, 
\end{equation} 
where $s_1,\ldots, s_r$ is a sequence in $\widehat{\CS}$ and $n\in \ZZ$. Objects of the form as in \eqref{bottsamlike} are called \emph{Bott-Samelson like objects}. 
\end{remark}

\subsection{Elementary properties of special objects}\label{presprop}
A common approach to prove a property of special objects in $\CK_k$ is to first show the validity of this property for the unit object $P_0$ and to verify to be preserved by applying the translation functors. If this property is unaffected by taking direct sums and direct summands, we completed.  
This is how one obtains 
\begin{lemma}\label{fingen:torfre}
Let $M$ be a special object. For all $A\in\SA$ and $\beta \in R^{+}$:
\begin{enumerate}
\item[a)] $M(A)$ and $M(A,\beta)$ are finitely generated $S^{\emptyset}_k$- resp. $S^{\beta}_k$-modules.
\item[b)]  $M(A)$ and $M(A,\beta)$ are free $S^{\emptyset}_k$- resp. $S^{\beta}_k$-modules.
\end{enumerate}
\end{lemma}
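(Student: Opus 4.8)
The plan is to follow exactly the inductive strategy outlined in Section~\ref{presprop}: verify the claim for the unit object $P_0$, then show that both properties are inherited under the translation functors $\CT_s$ and under the shift functors $\{l\}$, and finally observe that they pass to direct sums and direct summands. For $P_0$ this is immediate: each $P_0(A)$ is either $0$ or $S^{\emptyset}_k$ and each $P_0(A,\beta)$ is either $0$ or $S^{\beta}_k$, all of which are free of rank at most $1$ and finitely generated over the relevant ring. Stability under $\{l\}$ is trivial since shifting does not change the underlying module. Closure under finite direct sums is obvious, and closure under direct summands uses that a direct summand of a finitely generated module is finitely generated, while a direct summand of a free module over $S^{\emptyset}_k$ or $S^{\beta}_k$ is again free --- here one invokes that these are polynomial rings (localizations of $S_k$), hence regular, and in fact for the free-ness of summands it is cleanest to note that $S_k^{\emptyset}$ is a localization of the polynomial ring $S_k$ and $S_k^{\beta}$ has global dimension making projective graded modules free, or simply that finitely generated projective modules over these (graded, connected enough) rings are free.

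The substantive step is the inductive step for $\CT_s$. Fix a special $M$ satisfying a) and b), fix $s\in\widehat{\CS}$, and examine $\CT_s M$. For the module part, $\CT_s M(A)=M(\wm{A}{s})\oplus M(\wp{A}{s})$ is a direct sum of two modules that are finitely generated and free over $S^{\emptyset}_k$ by hypothesis, so $\CT_s M(A)$ is finitely generated and free over $S^{\emptyset}_k$. For $\CT_s M(A,\beta)$ we go through the three cases of the definition. In the case $\beta\uparrow\w{A}{s}\ne\w{A}{s}$ we have $\CT_s M(A,\beta)=M(\wm{A}{s},\beta)\oplus M(\wp{A}{s},\beta)$, again a direct sum of finitely generated free $S^{\beta}_k$-modules. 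In the case $\beta\uparrow\w{A}{s}=\w{A}{s}$ and $A=\wp{A}{s}$, we have $\CT_s M(A,\beta)=\beta M(\beta\downarrow A,\beta)\oplus M(\beta\uparrow A,\beta)$; here $\beta$ is a nonzerodivisor in $S^{\beta}_k$, so multiplication by $\beta$ is an isomorphism of $S^{\beta}_k$-modules onto its image (up to a degree shift), whence $\beta M(\beta\downarrow A,\beta)\cong M(\beta\downarrow A,\beta)\{2\}$ is finitely generated and free, and the direct sum is too. The remaining case $\beta\uparrow\w{A}{s}=\w{A}{s}$, $A=\wm{A}{s}$ gives $\CT_s M(A,\beta)=\{(\beta x+y,y)\mid x,y\in M(A,\beta)\}$; the map $(x,y)\mapsto(\beta x+y,y)$ is an $S^{\beta}_k$-module isomorphism from $M(A,\beta)\oplus M(A,\beta)$ (with a degree shift on the first summand coming from the factor $\beta$) onto $\CT_s M(A,\beta)$, since it has the evident inverse $(u,v)\mapsto(\beta^{-1}(u-v),v)$ which is well defined because $u-v\in\beta M(A,\beta)$ by construction and $M(A,\beta)$ is $\beta$-torsion-free; hence $\CT_s M(A,\beta)$ is again finitely generated and free over $S^{\beta}_k$.

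The main obstacle, and the only point requiring genuine care rather than bookkeeping, is the last case: one must be certain that the assignment $(x,y)\mapsto(\beta x+y,y)$ is actually a bijection onto $\CT_s M(A,\beta)$ and respects the $S^{\beta}_k$-module structure with the correct grading shift, which in turn relies on $\beta$ being a nonzerodivisor on $M(A,\beta)$ --- and this is exactly where part b) of the inductive hypothesis (free-ness, hence torsion-free-ness over $S^{\beta}_k$) is used to feed the induction forward. Once this identification $\CT_s M(A,\beta)\cong M(A,\beta)\{2\}\oplus M(A,\beta)$ is in place, everything else reduces to the two elementary closure facts: a finite direct sum of finitely generated free modules is finitely generated and free, and multiplication by the nonzerodivisor $\beta$ preserves these properties up to a shift. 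I would organize the write-up by first stating these two closure facts as a preliminary observation, then dispatching $P_0$, then running through the three cases of $\CT_s M(A,\beta)$ in the order above, and finally recording the passage to summands.
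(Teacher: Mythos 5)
Your overall strategy is exactly the one the paper uses -- in fact the paper offers nothing beyond the one-sentence recipe preceding the lemma (check $P_0$, check stability under $\CT_s$, shifts, direct sums and direct summands), so your write-up is strictly more detailed than the source. The case analysis for $\CT_s M(A,\beta)$ is correct: for $\beta\uparrow\w{A}{s}=\w{A}{s}$ and $A=\wm{A}{s}$ the map $(x,y)\mapsto(\beta x+y,y)$ really is a graded isomorphism from a shifted copy of $M(A,\beta)$ plus an unshifted one onto $\CT_s M(A,\beta)$, with injectivity coming from the fact that $\beta$ acts injectively on $M(A,\beta)\subseteq M(A)\oplus M(\beta\uparrow A)$ (it is even invertible on the ambient $S^{\emptyset}_k$-modules); the other two cases are direct sums of shifted copies of modules already known to be finitely generated and free.

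You have, however, misidentified where the genuine difficulty sits. The bijectivity of $(x,y)\mapsto(\beta x+y,y)$ is routine bookkeeping; the only step in this lemma that is not is your claim that a direct summand of a graded free $S^{\emptyset}_k$- or $S^{\beta}_k$-module is again graded free, and none of the justifications you sketch for it is valid as stated. These rings are \emph{not} connected graded: every $\alpha\in R^+$ (resp.\ every $\alpha\neq\beta$) becomes a homogeneous unit of degree $2$, so the degree-zero part is not a field, there is no unique maximal graded ideal, and the graded Nakayama argument does not apply. Likewise, regularity of a localization of a polynomial ring yields finite projective dimension of modules, not freeness of projectives, and ``global dimension making projective graded modules free'' is not a theorem. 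What you actually need is the statement that finitely generated graded projective modules over $S^{\emptyset}_k$ and $S^{\beta}_k$ are graded free; this is true and is taken for granted in the framework of \cite{MR1272539} and \cite{MR2726602} (where the analogous freeness of components of special objects is established), but it requires either a citation or a genuine argument -- for instance via the strongly graded structure coming from the degree-$2$ homogeneous units, which reduces the question to projective modules over the degree-zero subring. To be fair, the paper under review is silent on this point as well, so this is a gap you have inherited rather than introduced; but since you single out a different step as ``the main obstacle,'' it is worth recording that this is the one place where the proof is not mere closure-under-operations.
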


\begin{lemma} \label{basics:dense}
	Let $M \in \CM_k$, $A \in \SA$ and  $\beta \in R^{+}$. The inclusion 
	\begin{equation*}
\iota_{M(A,\beta)}\colon M(A,\beta) \hookrightarrow M(A) \oplus M (\beta \uparrow A)
	\end{equation*}
induces the following isomorphism of $S_k^{\emptyset}$-modules 	\begin{align}\label{basics:transdense}
		\widehat{\iota}_{M(A,\beta)} \colon M(A,\beta) \otimes_{S_k^{\beta}} S^{\emptyset}_k &\to M(A) \oplus M (\beta \uparrow A)\\
	\nonumber \sum m\otimes \lambda &\mapsto \sum \iota_{M(A,\beta)} (m)\cdot \lambda.
	\end{align}
	It is of degree $0$. 
\end{lemma}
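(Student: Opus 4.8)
The plan is to recognize $\widehat{\iota}_{M(A,\beta)}$ as a localization. Since $S_k^{\emptyset}=S_k^{\beta}[\beta^{-1}]$ and $M(A)\oplus M(\beta\uparrow A)$ is already an $S_k^{\emptyset}$-module, applying the exact functor $-\otimes_{S_k^{\beta}}S_k^{\emptyset}$ to the short exact sequence $0\to M(A,\beta)\xrightarrow{\iota_{M(A,\beta)}} M(A)\oplus M(\beta\uparrow A)\to C\to 0$ of $S_k^{\beta}$-modules identifies $\widehat{\iota}_{M(A,\beta)}$ with the induced map on localizations. Hence $\widehat{\iota}_{M(A,\beta)}$ is automatically injective, with cokernel $C\otimes_{S_k^{\beta}}S_k^{\emptyset}$. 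The degree-$0$ assertion is then immediate: $\iota_{M(A,\beta)}$ is homogeneous of degree $0$ by the definition of $\CK_k$, $S_k^{\emptyset}$ is a graded $S_k^{\beta}$-algebra, and $-\otimes_{S_k^{\beta}}S_k^{\emptyset}$ respects the grading. So the whole statement reduces to surjectivity of $\widehat{\iota}_{M(A,\beta)}$, i.e. to showing that after the base change to $S_k^{\emptyset}$ the submodule $M(A,\beta)$ fills up all of $M(A)\oplus M(\beta\uparrow A)$.

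I would establish this surjectivity by the usual induction over the construction of special objects, as described at the beginning of Section~\ref{presprop}. Stability under the shift functors, direct sums and passage to direct summands is formal: shifting changes neither the modules nor $\iota$, one has $\widehat{\iota}_{M\oplus N}=\widehat{\iota}_{M}\oplus\widehat{\iota}_{N}$, and $\widehat{\iota}$ is functorial so the idempotent cutting out a summand acts compatibly. For the unit object $P_0$, all of $P_0(A),P_0(\beta\uparrow A),P_0(A,\beta)$ vanish unless $A\in\{A_e,\beta\downarrow A_e\}$; in those two cases $\beta\uparrow A_e\ne A_e$, exactly one of $P_0(A),P_0(\beta\uparrow A)$ equals $S_k^{\emptyset}$ while the other is $0$, and $P_0(A,\beta)=S_k^{\beta}$ sits in the nonzero coordinate via $S_k^{\beta}\subset S_k^{\emptyset}$, so $\widehat{\iota}_{P_0(A,\beta)}$ is an isomorphism.

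For the inductive step I fix $A\in\SA$, $\beta\in R^{+}$ and run through the three cases in the definition of $\CT_sM(A,\beta)$, using Lemma~\ref{wallcomb} and Remark~\ref{updown} to identify $\w{(\beta\uparrow A)}{s}$ and hence $\CT_sM(A)$ and $\CT_sM(\beta\uparrow A)$. The mechanism is the same in each case: the base change to $S_k^{\emptyset}$ trivializes the twisted forms, since over $S_k^{\emptyset}$ one has $\{(\beta x+y,y)\mid x,y\in N\}=N[\beta^{-1}]\oplus N[\beta^{-1}]$ and $\beta N=N[\beta^{-1}]$. Concretely, in Case~1 ($\beta\uparrow\w{A}{s}=\w{A}{s}$, $A=\wm{A}{s}$), Lemma~\ref{wallcomb}.c) gives $\beta\uparrow A=\wp{A}{s}$ and $\CT_sM(A)=\CT_sM(\beta\uparrow A)=M(A)\oplus M(\beta\uparrow A)$, and the localized map becomes $\widehat{\iota}_{M(A,\beta)}\oplus\widehat{\iota}_{M(A,\beta)}$; in Case~2 ($A=\wp{A}{s}$), Remark~\ref{updown} gives $\beta\downarrow A=\wm{A}{s}$ and $\beta\uparrow A=\wm{(\beta\uparrow A)}{s}$, whence $\CT_sM(A)=M(\beta\downarrow A)\oplus M(A)$ and $\CT_sM(\beta\uparrow A)=M(\beta\uparrow A)\oplus M(\beta\uparrow\beta\uparrow A)$, and the localized map becomes $\widehat{\iota}_{M(\beta\downarrow A,\beta)}\oplus\widehat{\iota}_{M(\beta\uparrow A,\beta)}$; in Case~3 ($\beta\uparrow\w{A}{s}\ne\w{A}{s}$), Lemma~\ref{wallcomb}.b),d) identify the two summands of $\CT_sM(\beta\uparrow A)$ as $M(\beta\uparrow\wm{A}{s})$ and $M(\beta\uparrow\wp{A}{s})$, and the localized map becomes $\widehat{\iota}_{M(\wm{A}{s},\beta)}\oplus\widehat{\iota}_{M(\wp{A}{s},\beta)}$ after reordering the four summands. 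In every case this is an isomorphism by the induction hypothesis, which completes the step.

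The main obstacle is not any hard algebra but the combinatorics: correctly matching the alcoves occurring on the two sides, and verifying that the cross-wise embeddings described immediately after the definition of $\CT_s$ are exactly the ones under which the localized map decomposes as a (possibly permuted) direct sum of the $\widehat{\iota}$'s supplied by the induction hypothesis. That bookkeeping, together with the correct applications of Lemma~\ref{wallcomb} and Remark~\ref{updown}, is where I expect the actual care to be needed.
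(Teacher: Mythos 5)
Your proposal is correct and follows essentially the same route as the paper: induction over the construction of special objects, with the base case $P_0$ and the three-case analysis of $\CT_s M(A,\beta)$, using that $\{(\beta x+y,y)\}$ and $\beta N$ become all of $N\oplus N$ resp.\ $N$ after inverting $\beta$. Your explicit remark that injectivity is automatic because $-\otimes_{S_k^{\beta}}S_k^{\emptyset}$ is a localization (hence exact) and the target is already an $S_k^{\emptyset}$-module is a small but welcome addition that the paper leaves implicit.
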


\begin{proof}
This is true for the unit object $P_0$. For all $\beta \in R^+$
\begin{align*}
&\widehat{\iota}_{P_0(A_e,\beta)}\colon  S^{\beta}_k\otimes_{S^{\beta}_k}S^{\emptyset}_k \to S^{\emptyset}_k \text{ resp. }\\
&\widehat{\iota}_{P_0(\beta \downarrow A_e,\beta)}\colon  S^{\beta}_k\otimes_{S^{\beta}_k}S^{\emptyset}_k \to S^{\emptyset}_k 
\end{align*}
is surjective as $\forall s\in S_k^{\emptyset}$ it is $\widehat{\iota}(1\otimes s)=s$.
	
Applying $\CT_s$ inductively yields only objects fulfilling \eqref{basics:transdense}. Assume $M\in \CM_k$ to fulfill \eqref{basics:transdense} and let $s\in \widehat{\CS}$. Then the object $\CT_s M$ fulfills \eqref{basics:transdense}, too: 
Let $\beta \in R^{+}$ and $A \in \SA$ (cf. Figure \ref{fig:cases}). In case $\beta \uparrow \w{A}{s}\neq \w{A}{s}$, the property is obvious with
\begin{equation*}
 \iota_{\CT_s M (A,\beta)} =\iota_{M(\wm{A}{s},\beta)}\oplus\iota_{M(\wp{A}{s},\beta)}.
\end{equation*}

Let $\beta \uparrow \w{A}{s}= \w{A}{s}$, then
\begin{equation*}
 \iota_{\CT_s M (A,\beta)} = \begin{cases} 
 \iota_{M( A,\beta)}\oplus \iota_{M( A,\beta)}& \text{if }A=\wm{A}{s}, \\
 \iota_{M(\beta\downarrow A,\beta)} \oplus  \iota_{M(\beta\uparrow A,\beta)} & \text{if }A=\wp{A}{s}.
\end{cases}\\
\end{equation*}
And furthermore if $\widehat{\iota}_{M(A,\beta)}$ is surjective then 
\begin{equation*}
\widehat{\iota}_{M(A,\beta)}(\beta M(A,\beta) \otimes_{S^{\beta}_k} S^{\emptyset}_k) = M(A)\oplus M(A,\beta),
\end{equation*}
too, since $m\otimes \lambda=\beta m \otimes \beta^{-1}\lambda$ in $M(A,\beta) \otimes_{S^{\beta}_k} S^{\emptyset}_k$. 
\end{proof}

The $S^{\beta}_k$-modules $M(A,\beta)$ and $M(\beta \downarrow A, \beta)$ have in common that both partially map into the $S_k^{\emptyset}$-module $M(A)$. We would like to understand this relation better. Let $M\in \CK_k$, $A\in\SA$ and $\beta \in R^+$. By abuse of notation we will write
\begin{equation*}
M(A,\beta) \cap M(A)= M(A,\beta) \cap [M(A) \oplus 0].
\end{equation*}
For a direct sum $M(A) \oplus M(A')$ we denote the projection to $M(A)$ by $\pr_A$.

\begin{lemma}\label{verma:imagel}
	Let $M \in \CM_k$ be a special object, $A\in \SA$ and $\beta \in R^+$. Then
	\begin{equation}\label{verma:imagedown}
\pr_A [M(A,\beta)]
		=M(\beta \downarrow A, \beta)\cap M(A)
	\end{equation}
	and
	\begin{equation}\label{verma:imageup}
		\beta \, \pr_A [M(\beta \downarrow A,\beta)]
		\subseteq M(A, \beta)\cap M(A).
	\end{equation}
	If $M\in \CM_k^{\circ}$, then the above inclusion is an equality.
\end{lemma}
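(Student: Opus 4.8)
The strategy follows the usual inductive scheme for special objects: verify the statement for the generators $P_0$ and $Q_0$, and then check that each translation functor $\CT_s$ preserves both \eqref{verma:imagedown} and \eqref{verma:imageup} (with equality in the $\CM_k^\circ$-case); since both assertions are visibly stable under direct sums and passage to direct summands, this suffices. The inclusion \eqref{verma:imageup} is the easy direction in all cases: for any $M\in\CK_k$, an element $(\beta\,\pr_A[n],0)$ with $n\in M(\beta\downarrow A,\beta)$ should, after unwinding the embeddings, be seen to lie in $M(A,\beta)$; one uses that $\iota_{M(\beta\downarrow A,\beta)}$ already places the relevant component into $M(A)$, together with the fact that multiplication by $\beta$ kills the ``other half'' and lands inside the $S^\beta_k$-submodule $M(A,\beta)$ by definition of $Q_0$ and by the way $\CT_sM(A,\beta)$ is built. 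So the real content is \eqref{verma:imagedown}, the reverse inclusion there, and the upgrade to equality in \eqref{verma:imageup}.

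\textbf{Base cases.} For $P_0$ one checks \eqref{verma:imagedown} and \eqref{verma:imageup} by hand, which is immediate since $P_0(A)$ is nonzero only for $A=A_e$ and $P_0(A,\beta)$ is nonzero only for $A\in\{A_e,\beta\downarrow A_e\}$; in the few nontrivial cases both sides are either $0$ or $S^\emptyset_k$. For $Q_0$ one must be more careful: here $Q_0(A,\beta)=\{(\beta x+y,y)\mid x,y\in S^\beta_k\}$ for $A\in\SA^-_\beta$, and $\pr_A$ of this is all of $S^\emptyset_k$ (as $\beta$ is a unit there), which must be matched against $Q_0(\beta\downarrow A,\beta)\cap Q_0(A)$. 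Going through the cases in the definition of $Q_0$ — using that $\beta\uparrow\SA^-_\beta=\SA^+_\beta$, that $\beta\downarrow\SA^-_\beta$ is disjoint from $\SA^-_\beta\cup\SA^+_\beta$, and tracking which of $\SA^-_\beta$, $\SA^+_\beta$, $\beta\downarrow\SA^-_\beta$ the alcove $A$ falls into — one verifies \eqref{verma:imagedown} as an equality and \eqref{verma:imageup} as an equality; here the factor $\beta$ in $Q_0(A,\beta)=\beta S^\beta_k$ for $A\in\SA^+_\beta$ is exactly what makes the $\CM^\circ_k$-equality work, and Lemma \ref{basics:dense} guarantees the denseness that prevents the image from being too small.

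\textbf{Inductive step.} Fix $M$ satisfying the claim and $s\in\widehat{\CS}$; set $N=\CT_sM$ and let $A\in\SA$, $\beta\in R^+$. One splits according to the three cases in the definition of $\CT_sM(A,\beta)$, and for each one rewrites $N(A,\beta)$, $N(\beta\downarrow A,\beta)$, $N(A)$ in terms of the corresponding data of $M$, using Lemma \ref{wallcomb}, Remark \ref{updown}, and the compatibility of $\wm{\cdot}{s},\wp{\cdot}{s}$ with $\beta\uparrow\cdot$. In the generic case $\beta\uparrow\w{A}{s}\ne\w{A}{s}$ everything decomposes as a direct sum of the corresponding statements for $M$ at $\wm{A}{s}$ and $\wp{A}{s}$, so the inductive hypothesis transfers directly. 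In the two special cases $\beta\uparrow\w{A}{s}=\w{A}{s}$ one has to be attentive to the extra factor of $\beta$ and to the ``doubling'' $\{(\beta x+y,y)\}$; here \eqref{verma:imagedown} for $N$ reduces — after applying $\pr_A$ and using that $\beta$ is invertible in $S^\emptyset_k$ — to \eqref{verma:imagedown} and \eqref{verma:imageup} for $M$ combined, and conversely $N(\beta\downarrow A,\beta)\cap N(A)$ unwinds to the same combination. \textbf{The main obstacle} is precisely this bookkeeping in the two $\beta\uparrow\w{A}{s}=\w{A}{s}$ cases: one must check that the $\beta$-multiple appearing in $\CT_sM(A,\beta)$ when $A=\wp{A}{s}$ is consumed exactly by the inclusion \eqref{verma:imageup} for $M$ (so that no spurious $\beta$ survives and \eqref{verma:imagedown} remains an equality), and dually that in the $A=\wm{A}{s}$ case the ``diagonal-plus-$\beta$'' submodule projects onto the right intersection; getting the $\CM^\circ_k$-equality then amounts to observing that every inclusion used becomes an equality as soon as the input $M$ lies in $\CM^\circ_k$, which is where the $Q_0$ base case and Lemma \ref{basics:dense} are invoked once more.
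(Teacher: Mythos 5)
Your overall strategy is exactly the paper's: check the claim for the generators $P_0$ and $Q_0$, push it through the three cases in the definition of $\CT_s$ using Lemma \ref{wallcomb} and Remark \ref{updown}, and observe stability under shifts, direct sums and direct summands. Two points in your sketch need repair, though neither is fatal to the plan. First, \eqref{verma:imageup} is \emph{not} automatic ``for any $M\in\CK_k$'': a general object of $\CK_k$ imposes no relation whatsoever between $M(\beta\downarrow A,\beta)$ and $M(A,\beta)$ (take $M(\beta\downarrow A,\beta)=S^\beta_k\oplus S^\beta_k$ and $M(A,\beta)=0$ to see \eqref{verma:imageup} fail), so this inclusion must be carried through the same induction as \eqref{verma:imagedown}, which is what the paper does. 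Second, your base-case computation for $Q_0$ is off: for $A\in\SA^-_\beta$ one has
\begin{equation*}
\pr_A\bigl[\{(\beta x+y,y)\mid x,y\in S^\beta_k\}\bigr]=\{\beta x+y\mid x,y\in S^\beta_k\}=S^\beta_k,
\end{equation*}
not $S^\emptyset_k$ --- the invertibility of $\beta$ in $S^\emptyset_k$ is irrelevant here because $x,y$ range only over $S^\beta_k$. This is in fact the value you need, since $Q_0(\beta\downarrow A,\beta)\cap Q_0(A)=S^\beta_k$ as well; with the value $S^\emptyset_k$ the lemma would be false for $Q_0$. With these two corrections your outline coincides with the paper's proof, including the key bookkeeping in the two cases $\beta\uparrow\w{A}{s}=\w{A}{s}$, where the extra factor $\beta$ in $\CT_sM(A,\beta)$ for $A=\wp{A}{s}$ is matched against the intersection $\{(\beta x+y,y)\}\cap[0\oplus\CT_sM(A)]=\beta M(\beta\downarrow A,\beta)$ coming from the negative alcove.
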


\begin{proof}
As usual, we firstly examine the object $P_0$. We only need to consider the case when $A=A_e$ because otherwise $P_0(A)$ is the trivial module. One immediately gets 
\begin{equation*}
\pr_{A_e}[P_0(A_e,\beta)]=S^{\beta}_k
		=P_0(\beta \downarrow A_e, \beta)\cap P_0(A_e)
\end{equation*}
and
\begin{equation*}
\beta \, \pr_{A_e} [P_0(\beta \downarrow A_e,\beta)]=\beta  S^{\beta}_k
		\subset S^{\beta}_k = P_0(A_e, \beta)\cap P_0(A_e).		
\end{equation*}
Let now $A$ correspond to an element in the finite Weyl group $\CW$. Then
\begin{align}\label{verma:q0}
\beta\, \pr_A &[Q_0(\beta\downarrow A,\beta)]=\beta S^{\beta}_k\nonumber\\
&=
\begin{cases} \beta S^{\beta}_k \cap S^{\emptyset}_k & \text{if }A\in \beta\uparrow \SA_{\beta}^-, \\
\{(\beta x+y,y)\mid x,y\in S_k^{\beta}\}\cap [S_k^{\emptyset}\oplus 0] & \text{if }A\in \SA_{\beta}^-,
\end{cases}\\
&= Q_0(A,\beta)\cap Q_0(A)\nonumber.
\end{align}

Assume now that an object $M\in \CK_k$ fulfills \eqref{verma:imagedown} for all $\beta \in R^+$ and all $A\in \SA$ and let $s\in \widehat{\CS}$ be an arbitrary simple affine reflection. Then $\CT_s M$ fulfills \eqref{verma:imagedown} as well. We distinguish two cases. In the first case, $\w{A}{s}$ is of type $\beta_1\neq\beta$ and by this $\w{\beta \downarrow A}{s}$ is of a type $\beta_2 \neq \beta$. Then
\begin{equation}\label{verma:case3}
\pr_{A} [\CT_s M(A,\beta)]=\pr_{\wm{A}{s}} [M(\wm{A}{s},\beta)] \oplus \pr_{\wp{A}{s}}  [M(\wp{A}{s},\beta)].
\end{equation}
It is known by induction hypothesis that
\begin{equation*}
\pr_{\wm{A}{s}} [M(\wm{A}{s},\beta)] = M(\beta\downarrow\wm{A}{s},\beta)\cap M(\wm{A}{s})
\end{equation*}
and 
\begin{equation*}
\pr_{\wp{A}{s}}  [M(\wp{A}{s},\beta)]= M(\beta\downarrow\wp{A}{s},\beta)\cap M(\wp{A}{s})
 \end{equation*}
and Lemma \ref{wallcomb}.b) provides
\begin{equation*}
\{\beta\downarrow\wm{A}{s},\beta\downarrow\wp{A}{s}\}=\{\wm{(\beta \downarrow A)}{s}, \wp{(\beta \downarrow A)}{s}\}.
\end{equation*}  
Hence, \eqref{verma:case3} equals
\begin{equation*}
\CT_s M(\beta \downarrow A,\beta)\cap \CT_s M(A).
\end{equation*}
In the second case, both $\w{A}{s}$ and $\w{(\beta \downarrow A)}{s}$ are of type $\beta$. If $A$ is a \emph{negative alcove with respect to $s$}, that means $A=\wm{A}{s}$, $\beta \downarrow A$ is positive with respect to $s$ and 
\begin{equation*}
\pr_{A} [\CT_s M (A, \beta)] = M(A,\beta)
\end{equation*}
is equal to
\begin{equation*}
\CT_s M (\beta \downarrow A, \beta)\cap \CT_s M(A) =M(A,\beta).
\end{equation*}
If $A$ is a positive alcove, i.e. , $\beta \downarrow A$ is negative and thereby
\begin{equation*}
\pr_A[\CT_s M (A, \beta)]=\beta  M(\beta \downarrow A,\beta) 
\end{equation*}
while
\begin{equation*}
\{(\beta x+y,y)\mid x,y \in M(\beta \downarrow A,\beta)\}\cap [0\oplus (\CT_s M)(A)]=\beta M(\beta \downarrow A,\beta).
\end{equation*}

If $M\in \CK_k$ fulfills the condition in \eqref{verma:imageup} for all $\beta \in R^+$ and all $A\in \SA$, then (\ref{verma:imageup}) is also true for the object $\CT_s M$ where $s\in \widehat{\CS}$ is an arbitrary simple affine reflection. To prove this statement one calculates
\begin{align*}
	\beta \, \pr_A &[\CT_s M(\beta \downarrow A,\beta)]\\
	&=\beta
		\begin{cases}
			M(\beta\uparrow(\beta\downarrow A),\beta)=M(A,\beta)&\text{if }\beta \uparrow \w{A}{s}=\w{A}{s}\\
			&\quad\text{and }A=\wm{A}{s}\\
			M(\beta\downarrow A, \beta)&\text{if }\beta \uparrow\w{A}{s}=\w{A}{s}\\
			&\quad\text{and }A=\wp{A}{s}.\\
	\end{cases}
\end{align*}
This immediately coincides with
\begin{equation*}
\CT_s M(A,\beta) \cap \CT_s M(A)
\end{equation*}
for $\beta \uparrow \w{A}{s}=\w{A}{s}$.
For $\beta \uparrow \w{A}{s}\neq \w{A}{s}$ it is
\begin{align*}
\beta \, &\pr_A [\CT_s M(\beta \downarrow A,\beta)]\\
&=
\beta \, \pr_{\wm{A}{s}}[M(\beta \downarrow\wm{A}{s},\beta)] \oplus \beta \, \pr_{\wp{A}{s}}[M(\beta \downarrow\wp{A}{s},\beta)].
\end{align*} 
But by induction hypothesis this is included in
\begin{equation*}
M(\wm{A}{s},\beta)\cap M(\wm{A}{s})\oplus M(\wp{A}{s},\beta)\cap M(\wp{A}{s})=\CT_s M(A,\beta) \cap \CT_s M(A).
\end{equation*}
If the induction base is $Q_0$, we even obtain equality as in \eqref{verma:q0}.

Both properties are also preserved by taking direct sums, direct summands and by shifting.
\end{proof}

\begin{corollary}\label{projtimesbeta}
Let $(m_1,m_2) \in M(A)\oplus M(\beta \uparrow A)$ such that $(m_1, m_2)\in M(A,\beta)$. Then $(\beta m_1, 0) \in M(A,\beta) \text{ and } (0, \beta m_2) \in M(A,\beta)$.
\end{corollary}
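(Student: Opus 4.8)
The plan is to read the statement off directly from Lemma~\ref{verma:imagel}, together with the harmless fact that $\beta\in S^\beta_k$ — roots lie in the weight lattice $X$, so $\beta\in S_k\subseteq S^\beta_k$, even though $\beta$ is not a unit there, which is precisely why the conclusion is not vacuous. The first step is a reduction to a single claim. Since $M(A,\beta)$ is an $S^\beta_k$-submodule of $M(A)\oplus M(\beta\uparrow A)$ and $\beta\in S^\beta_k$, we have $\beta\cdot(m_1,m_2)=(\beta m_1,\beta m_2)\in M(A,\beta)$; hence once $(\beta m_1,0)\in M(A,\beta)$ is established, $(0,\beta m_2)=(\beta m_1,\beta m_2)-(\beta m_1,0)$ lies in $M(A,\beta)$ as well. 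So it is enough to prove $(\beta m_1,0)\in M(A,\beta)$.

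For that I would chase $m_1$ through the two identities of Lemma~\ref{verma:imagel}. From $(m_1,m_2)\in M(A,\beta)$ we get $m_1=\pr_A(m_1,m_2)\in\pr_A[M(A,\beta)]$, and \eqref{verma:imagedown} identifies this with $M(\beta\downarrow A,\beta)\cap M(A)$. Here one has to be attentive to the conventions: because $\beta\uparrow(\beta\downarrow A)=A$, the module $M(\beta\downarrow A,\beta)$ embeds into $M(\beta\downarrow A)\oplus M(A)$, and the expression ``$\cap\,M(A)$'' means the intersection with the \emph{second} summand. So $m_1\in M(\beta\downarrow A,\beta)\cap M(A)$ unravels to $(0,m_1)\in M(\beta\downarrow A,\beta)$, and in particular $m_1\in\pr_A[M(\beta\downarrow A,\beta)]$. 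Multiplying by $\beta$ and feeding the result into \eqref{verma:imageup} gives $\beta m_1\in\beta\,\pr_A[M(\beta\downarrow A,\beta)]\subseteq M(A,\beta)\cap M(A)$, which — now with ``$\cap\,M(A)$'' referring to the first summand of $M(A)\oplus M(\beta\uparrow A)$ — is exactly the assertion $(\beta m_1,0)\in M(A,\beta)$, completing the reduction.

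There is no real difficulty here: the argument uses \eqref{verma:imagedown} only as an equality (valid for all special objects) and \eqref{verma:imageup} only as an inclusion, so the hypothesis $M\in\CM_k$ is sufficient and one need not pass to $\CM_k^\circ$. The only step demanding genuine care is the bookkeeping just described — keeping straight, when one unravels the abuse of notation $M(\,\cdot\,,\beta)\cap M(\,\cdot\,)$ and the projections $\pr_A$, which of the two ambient direct summands each occurrence of $M(A)$ refers to; once that is pinned down, the corollary drops out immediately.
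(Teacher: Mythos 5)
Your proof is correct and follows exactly the derivation the paper intends: the statement is placed as a corollary of Lemma~\ref{verma:imagel} with no written proof, and your chain $m_1\in\pr_A[M(A,\beta)]=M(\beta\downarrow A,\beta)\cap M(A)\subseteq\pr_A[M(\beta\downarrow A,\beta)]$ followed by \eqref{verma:imageup}, plus the subtraction $(0,\beta m_2)=\beta(m_1,m_2)-(\beta m_1,0)$, is the natural way to extract it. Your bookkeeping of which summand each ``$\cap\,M(A)$'' refers to, and your observation that only the inclusion in \eqref{verma:imageup} is needed so the result holds for all of $\CM_k$, are both accurate.
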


\section{The Duality}
\begin{definition}
Let $M \in \CM_k$ with 
\begin{equation*}
M=(\{M(A)\}_{A\in \SA}, \{M(A,\beta)\}_{A\in \SA, \beta \in R^{+}}).
\end{equation*}
 We define the  \emph{duality $\cD$} in the following way:
	\begin{align*}
		\cD M (A) &= \Hom^{\bullet}_{S^{\emptyset}_k}(M(A),S^{\emptyset}_k),\\
		\cD M (A,\beta) &= \Hom^{\bullet}_{S^{\beta}_k}(M(A,\beta),S^{\beta}_k).
	\end{align*}		
\end{definition}
Let $B$ and $C$ be two graded $S^{\emptyset}_k$-modules. A graded homomorphism $\varphi\colon B \to C$ is of degree $l$ if
\begin{equation*}
	\varphi(B_n) \subseteq C_{l+n}
\end{equation*}
for all $n\in \ZZ$. 
The module of all homomorphisms of degree $l$ is denoted by 
\begin{equation*}
		\Hom_{S^{\emptyset}_k}^l (B,C).
\end{equation*}

Following Lemma \ref{fingen:torfre} we only consider finitely generated $S^{\emptyset}_k$-modules, and hence each module $\cD M(A)$ carries a natural $\ZZ$-grading, namely
\begin{equation*}
	\cD M(A)_l= \Hom_{S^{\emptyset}_k}^l (M(A),S^{\emptyset}_k).
\end{equation*}

In the following, we want to describe how $\cD M(A,\beta)$ embeds into $\cD M(A)\oplus \cD M(\beta \uparrow A)$. As each $M(A,\beta)$ is torsion free as seen in Lemma \ref{fingen:torfre},  there is the embedding
	\begin{equation*}
	-\otimes_{S^{\beta}_k}1     \colon	\Hom_{S^{\beta}_k}(M(A,\beta),S^{\beta}_k) \hookrightarrow \Hom_{S^{\beta}_k}(M(A,\beta),S^{\beta}_k)\otimes_{S^{\beta}_k} S^{\emptyset}_k
	\end{equation*}
	where the right-hand side is isomorphic to 
	\begin{equation*}
		\Hom_{S^{\beta}_k}(M(A,\beta), S^{\emptyset}_k)
	\end{equation*}
	as $M(A,\beta)$ is finitely generated as an $S_k^{\beta}$-module. This yields an isomorphism to 
	\begin{equation}\label{rightside}
		\Hom_{S^{\emptyset}_k}(M(A,\beta)\otimes_{S^{\beta}_k}S^{\emptyset}_k, S^{\emptyset}_k).
	\end{equation}
	But $M \in \CM_k$ and hence by Lemma \ref{basics:dense} 
	\begin{equation*}
		M(A,\beta)\otimes_{S^{\beta}_k}S^{\emptyset}_k \cong M(A) \oplus M(\beta \uparrow A).
	\end{equation*}  
	So the module in  \eqref{rightside} in turn is isomorphic to 
	\begin{equation*}
		\Hom_{S^{\emptyset}_k}(M(A) \oplus M (\beta \uparrow A),S^{\emptyset}_k).
	\end{equation*}
$\varphi \in \cD M(A,\beta)$ and $\varphi \otimes 1 \in \cD M(A)\oplus \cD M(\beta \uparrow A)$ coincide on $M(A,\beta)$. 
All homomorphisms above are of degree 0 so $\cD M(A,\beta)$ embeds gradedly into $\cD M(A) \oplus \cD M(\beta\uparrow A)$ and by this it is verified that for a special object $M \in \CM_k$ the dual object $\cD M$ is indeed an object in $\CK_k$. 

In addition the following equations hold for graded  $S^{\emptyset}_k$-modules $B$ and $C$:
\begin{equation*}
	\Hom_{S^{\emptyset}_k}^l (B,C)=\Hom_{S^{\emptyset}_k}^0 (B\{l\},C)=\Hom_{S^{\emptyset}_k}^0 (B,C\{-l\}), 
\end{equation*}
where the same can be done for $S^{\beta}_k$-modules. 
Hence, the functors
\begin{equation}\label{dualshift}
	\cD \circ \{n\} \simeq \{-n\} \circ \cD
\end{equation}
between $\CM_k$ and $\CK_k$ are isomorphic.

\subsection{The relation between $\cD$ and $\CT_s$}

To understand the duality $\cD$ in $\CK_k$ better, we need to know the relation between the translation functors $\CT_s$ and $\cD$ besides the already seen relation of $\cD$ to $\{n\}$ in \eqref{dualshift}.

To do so, we need to introduce a further set of \emph{dual} translation functors $\{\CT^{\vee}_{s}\mid s\in \widehat{\CS}\}$ which only differs to $\{\CT_{s}\mid s\in \widehat{\CS}\}$ by interchanging the role of up-arrows and down-arrows. More precisely, this affects the case when $\beta \uparrow \w{A}{s}=\w{A}{s}$ and $A=\wp{A}{s}$ and is realised by instead of shifting $M(\beta \downarrow A,\beta)$ to shift $M(\beta\uparrow A,\beta)$  in degree. Concretely:
 
\begin{align*}
\CT^{\vee}_s M (A) &= M(\wm{A}{s}) \oplus M(\wp{A}{s}) \text{ and }\\
\CT^{\vee}_s M (A,\beta) & =  
\begin{cases} 
	\{(\beta x + y,y)\mid x,y \in M(A,\beta)\} & \text{if $\beta \uparrow \w{A}{s}=\w{A}{s}$}\\
				&\text{ and $A=\wm{A}{s}$,}\\
	M(\beta \downarrow A,\beta) \oplus \beta  M(\beta \uparrow A,\beta) & \text{if $\beta \uparrow \w{A}{s}=\w{A}{s}$}\\
				&\text{ and $A=\wp{A}{s}$,}\\
	M(\wm{A}{s},\beta) \oplus M(\wp{A}{s},\beta)&\text{if }\beta \uparrow \w{A}{s}\neq\w{A}{s}.
\end{cases}
\end{align*}
A lot of properties preserved by $\CT_s$ are preserved by $\CT^{\vee}_s$, too. This is in particular true for the properties in the Lemmas \ref{fingen:torfre} and \ref{basics:dense}.
\begin{proposition}\label{theo:dualtrans}
Let $s \in \widehat{\CS}$. Then the functor 
\begin{equation*}
\cD \circ \CT_{s}\colon \CM_k \to \CK_k
\end{equation*}
is isomorphic to 
\begin{equation*}
\{-2\}\circ \CT^{\vee}_{s} \circ \cD \colon \CM_k \to \CK_k.
\end{equation*}
\end{proposition}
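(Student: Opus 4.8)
The plan is to construct an explicit natural isomorphism $\Phi\colon \cD\circ\CT_s M \xrightarrow{\sim} \{-2\}\circ\CT_s^\vee\circ\cD M$ componentwise over the alcoves $A\in\SA$ and then check compatibility with the edge submodules $(A,\beta)$. First I would work out the $A$-components. By definition $\cD\CT_s M(A)=\Hom^\bullet_{S_k^\emptyset}(M(\wm{A}{s})\oplus M(\wp{A}{s}),S_k^\emptyset)\cong \cD M(\wm{A}{s})\oplus\cD M(\wp{A}{s})$, which is exactly $\CT_s^\vee\cD M(A)$ since $\CT_s$ and $\CT_s^\vee$ act identically on the $A$-components; the shift $\{-2\}$ needs to be absorbed, and this is where the factor of $2$ enters: the natural pairing/identification between $M(A,\beta)$ and its double dual, together with the grading convention that $\deg X=2$, forces a degree shift by $2$ precisely on the "$\beta\uparrow$/$\beta\downarrow$" summand where multiplication by $\beta$ (degree $2$) is involved. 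So on the $A$-level I would take $\Phi_A$ to be the identity up to this global $\{-2\}$ bookkeeping, using $\cD\{n\}\simeq\{-n\}\cD$ from \eqref{dualshift}.

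The substantive content is the behaviour on the submodules $\cD\CT_s M(A,\beta)\subseteq \cD\CT_s M(A)\oplus\cD\CT_s M(\beta\uparrow A)$. I would split into the three cases of the definition of $\CT_s$, in parallel with the proof of Lemma~\ref{basics:dense}. In the case $\beta\uparrow\w{A}{s}\neq\w{A}{s}$ everything is a direct sum: $\CT_s M(A,\beta)=M(\wm{A}{s},\beta)\oplus M(\wp{A}{s},\beta)$, dualising commutes with finite direct sums, and the same decomposition holds for $\CT_s^\vee$, so here $\Phi$ is built from the (inductively available) identification on $M$ together with Lemma~\ref{wallcomb}. The two remaining cases are $\beta\uparrow\w{A}{s}=\w{A}{s}$ with $A=\wm{A}{s}$ and with $A=\wp{A}{s}$, and the key point is that \emph{dualising interchanges these two cases}: the submodule $\{(\beta x+y,y)\mid x,y\in M(A,\beta)\}$, which sits in the "$-$" case for $\CT_s$, has $S_k^\beta$-dual naturally identified (inside the dual of the ambient module, using that $\beta$ is a unit in $S_k^\emptyset$ but not in $S_k^\beta$) with a submodule of the form "one summand, the other multiplied by $\beta$", which is exactly the shape occurring in the "$+$" case of $\CT_s^\vee$ — and conversely. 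Concretely I would dualise the short exact sequence $0\to M(A,\beta)\xrightarrow{x,y\mapsto(\beta x+y,y)} M(A,\beta)^{\oplus 2}\to M(A,\beta)/\beta M(A,\beta)\to 0$ (or rather its analogue after $\otimes S_k^\emptyset$), apply $\Hom_{S_k^\beta}(-,S_k^\beta)$, and read off that $\cD$ of the first term is $\{(x',\beta y')\}$ up to the degree shift $\{-2\}$ coming from the appearance of $\beta$. Corollary~\ref{projtimesbeta} and the image computations of Lemma~\ref{verma:imagel} are the tools that make these submodule identifications precise.

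Finally I would check naturality in $M$ (a morphism $f=(f_A)$ induces $\cD f$ and $\CT_s^\vee\cD f$ in the obvious way, and $\Phi$ is manifestly compatible because it is assembled from the canonical maps $M(A,\beta)\otimes S_k^\emptyset\cong M(A)\oplus M(\beta\uparrow A)$ and the canonical duality isomorphisms, all of which are natural), and verify the degree-$0$ claim so that $\Phi$ really is an isomorphism in $\CK_k$ after the $\{-2\}$ shift is in place. The main obstacle I anticipate is the bookkeeping in the second and third cases above: one must pin down the precise embedding of $\cD\CT_s M(A,\beta)$ into $\cD\CT_s M(A)\oplus \cD\CT_s M(\beta\uparrow A)$ (the excerpt's discussion of this embedding via \eqref{rightside} and Lemma~\ref{basics:dense} is exactly what is needed), keep straight which $S_k^\beta$-dual lands "crosswise" versus "diagonally", and confirm that the single global shift by $\{-2\}$ — rather than a shift depending on $A$ or $\beta$ — reconciles both the $A$-components and all three edge cases simultaneously. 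Once the dictionary "$\cD$ swaps the $A=\wm{A}{s}$ and $A=\wp{A}{s}$ subcases and trades $\beta\uparrow$ for $\beta\downarrow$" is established, the verification that the result is $\CT_s^\vee$ (not $\CT_s$) on the nose, shifted by $\{-2\}$, is then routine.
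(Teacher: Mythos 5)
Your skeleton (componentwise natural isomorphism, three-case analysis along the definition of $\CT_s$, degree bookkeeping producing the $\{-2\}$) matches the paper's, but the proposal is missing the one idea that makes the construction work, and it contains a structural error. The paper's isomorphism on the $A$-component is \emph{not} the identity up to bookkeeping: it is multiplication by $\alpha_s(A)=\sign(A)\,\alpha(\w{A}{s})$, a degree-$2$ element whose \emph{sign} flips between the two alcoves $\wm{A}{s}$ and $\wp{A}{s}$ sharing a wall. Some degree-$2$ multiplier is forced just to get a degree-$0$ map into the $\{-2\}$-shifted target (the paper itself warns that the naive isomorphism $N\cong N\{-2\}$ by a unit of $S^{\emptyset}_k$ does not globalize), and the particular choice $\pm\alpha(\w{A}{s})$ with opposite signs on the two sides of the wall is exactly what makes the $(A,\beta)$-submodule conditions hold in all three cases simultaneously. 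Concretely, in the case $\beta\uparrow\w{A}{s}=\w{A}{s}$, $A=\wm{A}{s}$, the dual of $\{(\beta x+y,y)\}$ is generated by $(0,g_i^*)$ and $(\beta^{-1}g_i^*,-\beta^{-1}g_i^*)$, and only after multiplying the two components by $-\beta$ and $+\beta$ respectively do these land in $\{(\beta\varphi+\psi,\psi)\}$; multiplying both by $+\beta$, or by $1$, fails. Your proposal never identifies this multiplier or its sign, and asserting that "the identity up to global $\{-2\}$ bookkeeping" works on the $A$-level is precisely the step that would fail.

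Second, your "key point" that dualising interchanges the subcases $A=\wm{A}{s}$ and $A=\wp{A}{s}$ is not correct: which subcase $A$ falls into is a geometric fact about $A$ and its wall, unchanged by $\cD$. What actually happens is that the dual of the diagonal shape $\{(\beta x+y,y)\}$ is again a diagonal shape (so the $\wm{}$ case of $\CT_s$ matches the $\wm{}$ case of $\CT_s^{\vee}$, which is literally the same formula), while in the $\wp{}$ case the factor $\beta$ migrates from the $\beta\downarrow$ summand to the $\beta\uparrow$ summand --- and that migration is exactly the difference between $\CT_s$ and $\CT_s^{\vee}$. (Your dual of the diagonal shape is also not of the form "one summand, the other multiplied by $\beta$": before rescaling it is $\{(a,b):\beta a\in\cD M(A,\beta),\ a+b\in\cD M(A,\beta)\}$, which does not split.) Finally, in the case $\beta\uparrow\w{A}{s}\neq\w{A}{s}$ the submodules sit crosswise, and the verification needs that $\alpha_s(\beta\uparrow A)-\alpha_s(A)\in\ZZ\beta$ together with Corollary \ref{projtimesbeta}, and that both multipliers are units in $S^{\beta}_k$; your appeal to "dualising commutes with direct sums" plus an unspecified induction does not address this. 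The short-exact-sequence idea for identifying the dual of the diagonal submodule is a reasonable alternative computation, but by itself it does not produce the signed multipliers that reconcile all three cases with a single global shift.
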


\begin{proof}
We will give the natural transformations between 
\begin{equation*}
\cD \circ \CT_{s}\overset{\tau}{\underset{\sigma}{\rightleftarrows}} \{-2\}\circ\CT^{\vee}_{s} \circ \cD
\end{equation*}
explicitly. For $A\in \SA$ define $\alpha_s (A):=\sign(A) \alpha(\w{A}{s})$ where $\alpha(\w{A}{s})\in R^+$ is the type of $\w{A}{s}$ (i.e. $\alpha(\w{A}{s}) \uparrow \w{A}{s}=\w{A}{s}$) and 
\begin{equation*}
\sign(A)=
\begin{cases}
1 &\text{ if }A=\wp{A}{s},\\
-1 & \text{ if }A=\wm{A}{s}.
\end{cases}
\end{equation*}
Keep in mind that
\begin{align*}
[\cD \circ \CT_{s}] M(A) &= \Hom_{S_k^{\emptyset}}(\CT_s M(A), S_k^{\emptyset})\\
&= \Hom_{S_k^{\emptyset}}(M(\wm{A}{s}) \oplus M(\wp{A}{s}), S_k^{\emptyset})
\end{align*}
and 
\begin{align*}
[\cD \circ \CT_{s}] M(A,\beta) &= \Hom_{S_k^{\beta}}(\CT_s M(A,\beta), S_k^{\beta})\\
&= \begin{cases}
&\Hom_{S_k^{\beta}}(\{\beta x+y,y)\mid x,y \in M(A,\beta)\}, S_k^{\beta})\\
	&\qquad\qquad\text{if $\beta \uparrow \w{A}{s}=\w{A}{s}$ and $A=\wm{A}{s}$,}\\
&\Hom_{S_k^{\beta}}(\beta M(\beta \downarrow A,\beta)\oplus M(\beta \uparrow A, \beta), S_k^{\beta}) \\
				&\qquad\qquad\text{if $\beta \uparrow \w{A}{s}=\w{A}{s}$ and $A=\wp{A}{s}$,}\\
&\Hom_{S_k^{\beta}}(M(\w{A}{s}_-,\beta) \oplus M(\wp{A}{s},\beta), S_k^{\beta})\\
&\qquad\qquad\text{if } \beta \uparrow \w{A}{s}\neq\w{A}{s}.
\end{cases}
\end{align*} 

Similarly, we obtain
\begin{align*}
[\{-2\} \circ \CT_{s}^{\vee}\circ \cD] M(A) =\cD M(\wm{A}{s})\{-2\}\oplus \cD M(\wp{A}{s})\{-2\}\\
\qquad \qquad= \Hom_{S_k^{\emptyset}}(M(\wm{A}{s}), S_k^{\emptyset})\{-2\}\oplus \Hom_{S_k^{\emptyset}}(M(\wp{A}{s}), S_k^{\emptyset})\{-2\}\\
\end{align*}
and 
\begin{align*}
&[\{-2\} \circ \CT_{s}^{\vee}\circ \cD] M(A,\beta) \\&= \begin{cases}
&\{\beta \varphi+\psi,\psi)\mid \varphi, \psi \in \cD M(A,\beta)\}\{-2\}\\
&\qquad\qquad\text{if } \beta \uparrow \w{A}{s}=\w{A}{s}, A=\wm{A}{s},\\
&\cD M(\beta \downarrow A,\beta)\{-2\}\oplus \beta \cD M(\beta \uparrow A, \beta)\{-2\}\\
&\qquad\qquad\text{if } \beta \uparrow \w{A}{s}=\w{A}{s}, A=\wp{A}{s},\\
&\cD M(\wm{A}{s},\beta)\{-2\} \oplus \cD M(\wp{A}{s},\beta)\{-2\}\\
&\qquad\qquad\text{if } \beta \uparrow \w{A}{s}\neq\w{A}{s}.
\end{cases}
\end{align*} 

For any special object $M$ define the natural transformation $\tau^M=(\tau^M_A)_{A\in\SA}$ resp. $\sigma^M=(\sigma^M_A)_{A\in\SA}$ as follows.
We set 
\begin{align*}
&\tau^M_A\colon &[\cD \circ \CT_{s}] M(A) &\to [\{-2\}\circ\CT^{\vee}_{s} \circ \cD ]M(A),\\
&&\varphi &\mapsto \alpha_s(A)(\varphi|_{M(\wm{A}{s})}\oplus \varphi|_{M(\wp{A}{s})}),\\
&\text{ and }&&\\
&\sigma^M_A\colon &[\{-2\}\circ\CT^{\vee}_{s} \circ \cD ]M(A)&\to [\cD \circ \CT_{s}]M(A),\\
&&(\varphi_{M(\wm{A}{s})},\varphi_{M(\w{A}{s}_+)}) &\mapsto \alpha_s(A)^{-1}(\varphi_{M(\w{A}{s}_-)} +  \varphi_{M(\w{A}{s}_+)}).
\end{align*}

Each $\tau^M$ is of total degree $2$ and each $\sigma^M$ is of total degree $-2$ and hence after the $\{-2\}$-shift of $\CT_s^{\vee}\circ \cD$ both are of degree $0$. Under the assumption that $\tau$ and $\sigma$ indeed are natural transformations and by the fact that  $\tau^M_A\circ \sigma^M_A=\id_{[\{-2\}\circ\CT_s^{\vee}\circ \cD]M(A)}$ and $\sigma^M_A\circ \tau^M_A=\id_{[\cD\circ \CT_s]M(A)}$ for each $A\in \SA$ and $M \in \CM_k$, we get the claim. 

In order to complete the proof the following must be verified:
\begin{align*}
&\tau^M_A\oplus \tau^M_{\beta \uparrow A} ([\cD \circ \CT_{s}]M(A,\beta))\subseteq [\{-2\}\circ\CT^{\vee}_{s} \circ \cD]M(A,\beta) \quad\text{ and }\\
&\sigma^M_A\oplus \sigma^M_{\beta \uparrow A} ([\{-2\}\circ\CT^{\vee}_{s} \circ \cD ]M(A,\beta))\subseteq [\cD \circ \CT_{s} ]M(A,\beta)
\end{align*}
for any $\beta \in R^{+}$ and $A \in \SA$.

Consider the case when $\beta \uparrow \w{A}{s} =\w{A}{s}$ and $A=\wm{A}{s}$ (cf. Figure \ref{fig:cases}). In this case, we get $\alpha_s(A)=-\beta$ and $\alpha_s(\beta \uparrow A)=\beta$. $M(A,\beta)$ is finitely generated by $g_1, \ldots, g_j$ with the dual homomorphisms $g_1^*,\ldots, g_j^*$ as it is free over $S_k^{\beta}$. Then the generators of $\CT_s M (A,\beta)$ are 
\begin{equation*}
(g_1,g_1),(\beta g_1,0),\ldots,(g_j,g_j),(\beta g_j,0). 
\end{equation*}
Hence 
\begin{equation*}
(0,g_1^*),(\beta^{-1}g_1^*,-\beta^{-1}g_1^*),\ldots,(0,g_j^*),(\beta^{-1}g_j^*,-\beta^{-1}g_j^*)
\end{equation*}
 generate $[\cD \circ \CT_s] M(A,\beta)$ which are mapped in turn by $\tau^M_A\oplus\tau^M_{\beta \uparrow A}$ to  
\begin{equation*}
(0,\beta g_1^*),(-g_1^*,-g_1^*),\ldots,(0,\beta g_j^*), (-g_j^*,-g_j^*).
\end{equation*}
But this is a generating set of $[\{-2\}\circ\CT_s^{\vee}\circ \cD]M(A,\beta)$. 

Next, we study the case $\beta \uparrow \w{A}{s}=\w{A}{s},A=\wp{A}{s}$. Here, $\alpha_s(A)=\beta$ and $\alpha_s(\beta \uparrow A)=-\beta$. In this case $[\cD \circ \CT_{s}]M (A,\beta)$ is 
\begin{equation*}
	\Hom_{S^{\beta}_k}(\beta M(\beta \downarrow A,\beta), S^{\beta}_k)\oplus\Hom_{S^{\beta}_k}(M(\beta \uparrow A,\beta), S^{\beta}_k).
\end{equation*}
For an element $\varphi \in [\cD\circ \CT_{s}]M (A,\beta)$ we write $\varphi=(\varphi_1,\varphi_2)$ with $\varphi_1\in \cD(\beta M(\beta \downarrow A,\beta))$ and $\varphi_2 \in \cD(M(\beta \uparrow A,\beta))$. By $\tilde{\varphi_1}\colon M(\beta \downarrow A,\beta)\to S^{\beta}_k$ we denote the homomorphism $\tilde{\varphi_1}(m)=\varphi_1(\beta m)$. Thus
\begin{equation*}
\varphi=(\varphi_1,\varphi_2)\hookrightarrow (\tilde{\varphi_1}\otimes \beta^{-1},\varphi_2 \otimes 1)\overset{\tau}{\mapsto}(\tilde{\varphi_1}\otimes 1,-\varphi_2\otimes \beta), 
\end{equation*}
but this corresponds to the element $(\tilde{\varphi_1},\beta \varphi_2)$ in $[\{-2\}\circ\CT^{\vee}_s \circ \cD ]M(A,\beta)$. In both cases, 
the opposite direction via $\sigma$ works analogously. 

The residual part is $\beta \uparrow \w{A}{s}\neq \w{A}{s}$. In this particular case, $\alpha_s(A)\notin \{-\beta, \beta\}$, but at least $\alpha_s(\beta \uparrow A) =\alpha_s(A) + \beta z$, $z \in \ZZ$. Let $\varphi \in \cD(M(\w{A}{s}_{\pm},\beta))$ and $(m_1,m_2) \in M(\w{A}{s}_{\pm},\beta)$. With Corollary \ref{projtimesbeta} we get $(0,\beta m_2) \in M(\w{A}{s}_{\pm},\beta)$ and then
\begin{align*}
&\tau_A\oplus \tau_{\beta \uparrow A}(\varphi \otimes 1)(m_1, m_2)\\
&\qquad = \alpha_s(A) (\varphi \otimes 1)(m_1,m_2) + z (\varphi \otimes 1)(0, \beta m_2) \in S^{\beta}_k.
\end{align*}
Hence $\tau_A\oplus \tau_{\beta \uparrow}(\varphi \otimes 1)\in \cD(M(\w{A}{s}_{\pm},\beta))$. 
Because of $\alpha_s(A)\neq \pm \beta \neq \alpha_s(\beta \uparrow A)$, the elements $\alpha_s(A)$ and $\alpha_s(\beta \uparrow A)$ are not only units in $S^{\emptyset}_k$ but as well in $S^{\beta}_k$ and therefore, multiplication by $\alpha_s(A)^{-1}$ resp. $\alpha_s(\beta \uparrow A)^{-1}$ means multiplication by an element in $S^{\beta}_k$. Hence, 
\begin{eqnarray*}
\sigma_A\oplus \sigma_{\beta\uparrow A}(\varphi \otimes 1)(m_1, m_2)\\
= \alpha_s(A)^{-1}(\varphi \otimes 1)(m_1,m_2) - z(\alpha_s(\beta \uparrow A))^{-1} (\varphi \otimes 1)(0, \beta m_2) \in S^{\beta}_k 
\end{eqnarray*}
and $\sigma_A\oplus \sigma_{\beta \uparrow A}(\varphi \otimes 1)\in \cD(M(\w{A}{s}_{\pm},\beta))$. 
\end{proof}

\subsection{The tilting functor $\CC_{w_0}$}
So far there is the following situation. $\{\CT_{s}\mid s\in \widehat{\CS}\}$ is the set of translation functors on $\CK_k$ which spans a subcategory $\CM_k$. Equally, the set $\{\CT^{\vee}_{s}\mid s\in \widehat{\CS}\}$ of the so-called dual translation functors spans a subcategory $\CM^{\vee}_k$ of similar type. The duality interchanges objects of those two subcategories. 

Our goal is to relate those two categories and to find a concept of self-duality of special objects. 
$\CT_s$ and $\CT^{\vee}_s$ differ essentially in the order arising from $\uparrow$ resp. $\downarrow$. This gives reason to define a covariant functor that tilts this order. 

Let $w\in \CW$. We want to distinguish the two situations where $w(\beta)\in R^{+}$ and $w(\beta) \in R^{-}$ for a positive root $\beta$. Therefore, recall the definition of
\begin{equation*}
w(\beta)^+=\begin{cases}
  w(\beta)  & \text{if }w(\beta) \in R^{+}, \\
  -w(\beta) & \text{if }w(\beta) \in R^{-}.
\end{cases}
\end{equation*}
Furthermore, we will need to move from $S^{w(\beta)^+}_k$-modules to $S^{\beta}_k$-modules. This is done by the twist functor 
\begin{equation*}
t_w \colon S^{w(\beta)^+}_k\catmod \to S^{\beta}_k\catmod,
\end{equation*}
where the $S^{\beta}_k$-action on a module $t_w(M)$ is given by
\begin{equation*}
\forall s\in S^{\beta}_k,\, m\in t_w(M)=M: \ s.m=w^{-1}(s).m.
\end{equation*}
Here $w^{-1}$ is extended to $S^{\emptyset}_k$ with
\begin{align*}
w^{-1}\colon S^{\emptyset}_k &\to S^{\emptyset}_k\\
1 &\mapsto 1, \\
\alpha &\mapsto w^{-1}( \alpha).
\end{align*}

Let $N$ be an object in $\CK_k$. The following defines a functor $\CC_{w}\colon \CK_k \to \CK_k$ depending on an element $w\in \CW$:
\begin{align*}
	\CC_{w} N (A) &=N(w.A),\\
	\CC_{w} N (A,\beta)&=
		\begin{cases}
	t_{w} N(w.(\beta \uparrow A), w(\beta)^+) & \text{if }w(\beta) \in R^{-}\\
	t_{w} N(w.A, w(\beta)^+)& \text{if }w(\beta) \in R^{+}.
		\end{cases}
\end{align*}
We will call $\CC_{w}$ \emph{tilting functor on $w$}. 

The next step is to verify that $\CC_{w}$ is well-defined. If $N$ is an object in $\CK_k$, we need to make sure that $\CC_w N (A,\beta)$ is an $S^{\beta}_k$-submodule of $\CC_w N (A) \oplus \CC_w N (\beta\uparrow A)$ for each $\beta \in R^{+}, A \in \SA$. Let $w(\beta)\in R^{+}$. By definition
\begin{equation*}
	\CC_w N (A,\beta)=t_w N(w.A,w(\beta)^+).
\end{equation*}
As $N\in \CK_k$, $N(w.A,w(\beta)^+)$ embeds into $N(w.A)\oplus N(w(\beta)^+\uparrow w.A)$.
By Lemma \ref{kipp:wb} the previous term equals
\begin{equation*}
	N(w.A)\oplus N(w.(\beta\uparrow A)) = \CC_w N (A) \oplus \CC_w N(\beta \uparrow A). 
\end{equation*}
The case where $w(\beta) \in R^{-}$ works similarly. 
\begin{equation*}
	\CC_w N (A,\beta)=t_w N(w.(\beta \uparrow A),w(\beta)^+)
\end{equation*}
embeds into 
\begin{equation*}
	N(w.(\beta \uparrow A))\oplus N(w(\beta)^+\uparrow w.(\beta \uparrow A))
\end{equation*}
 and by Lemma \ref{kipp:wb} this equals
\begin{equation*}
	\CC_w N(\beta \uparrow A) \oplus \CC_w N(A).
\end{equation*}
Please notice that in this case the role of $A$ and $\beta \uparrow A$ were exchanged.

The tilting functor would not be a functor in general if it was defined the same way for the case $w(\beta) \in R^{-}$ as for the case $w(\beta) \in R^{+}$. As we have seen this would violate the condition 
	\begin{equation*}
		\CC_w N (A,\beta)\hookrightarrow\CC_w N (A) \oplus \CC_w N 	
		(\beta\uparrow A)
	\end{equation*}
	for some objects in $\CK_k$. 

From now on, we will restrict to the case $w=w_0$ where $w_0$ is the longest element of the finite Weyl group. We will denote $\CC_{w_0}$ simply by $\CC$. Please keep in mind that this implies $w(\beta) \in R^{-}$ for all $\beta \in R^+$.

\begin{proposition}\label{theo:kipptrans}
Let $s \in \widehat{\CS}$. Then the functors 
\begin{equation*}
	\CC \circ \CT^{\vee}_s \colon \CK_k \to \CK_k
\end{equation*}
and 
\begin{equation*}
	\CT_s \circ \CC \colon \CK_k \to \CK_k
\end{equation*}
are isomorphic. 
\end{proposition}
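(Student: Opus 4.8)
===PROOF PROPOSAL===

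The plan is to construct explicit mutually inverse natural transformations $\CC\circ\CT^{\vee}_s \rightleftarrows \CT_s\circ\CC$ by unwinding both sides componentwise, exactly as in the proof of Proposition \ref{theo:dualtrans}. Since $w_0(\beta)\in R^-$ for every $\beta\in R^+$, on the $A$-components we have $\CC\circ\CT^{\vee}_s N(A) = \CT^{\vee}_s N(w_0.A) = N(\wm{(w_0.A)}{s})\oplus N(\wp{(w_0.A)}{s})$, while $\CT_s\circ\CC N(A) = \CC N(\wm{A}{s})\oplus \CC N(\wp{A}{s}) = N(w_0.\wm{A}{s})\oplus N(w_0.\wp{A}{s})$. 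By Lemma \ref{kipp:arithmetik}.a) these two sets of alcoves coincide, so the $A$-component isomorphism is, up to a possible swap of the two summands, the identity; I would fix the convention that matches $N(w_0.\wm{A}{s})$ with whichever of $N(\wm{(w_0.A)}{s})$, $N(\wp{(w_0.A)}{s})$ it equals, and define $\theta^N_A$ accordingly (the twist functor $t_{w_0}$ only changes the $S^\beta_k$-action, not the underlying set, so on the level of $A$-components there is nothing to twist).

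The substance is checking that $\theta^N_A\oplus\theta^N_{\beta\uparrow A}$ carries $\CC\circ\CT^{\vee}_s N(A,\beta)$ isomorphically onto $\CT_s\circ\CC N(A,\beta)$ for each $\beta\in R^+$, $A\in\SA$. Here one must carefully trace the case distinctions. By Lemma \ref{kipp:arithmetik}.b), $\beta\uparrow\w{A}{s}=\w{A}{s}$ holds iff $w_0(\beta)^+\uparrow\w{(w_0.A)}{s}=\w{(w_0.A)}{s}$, so the ``wall fixed / wall moved'' dichotomy is preserved under $w_0$; and by Lemma \ref{kipp:arithmetik}.a) together with the sign bookkeeping, $A=\wm{A}{s}$ corresponds to $w_0.A$ being the $+$ or $-$ alcove of its $s$-wall — this is where the swap between $\CT^{\vee}_s$ (which shifts $M(\beta\uparrow A,\beta)$ by $\beta$ in the $\wp{A}{s}$ case) and $\CT_s$ (which shifts $M(\beta\downarrow A,\beta)$) is exactly compensated by the $\uparrow\leftrightarrow\downarrow$ interchange built into the definition of $\CC_{w_0}$ via Lemma \ref{kipp:wb}. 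I would write out the three cases: (i) wall fixed, $A$ negative: both sides are $\{(\beta' x+y,y)\}$-type submodules and one checks the defining relations match after applying $t_{w_0}$; (ii) wall fixed, $A$ positive: $\CT^{\vee}_s$ gives $M(\beta\downarrow A,\beta)\oplus\beta M(\beta\uparrow A,\beta)$, and after $\CC$ the roles of $\beta\uparrow$ and $\beta\downarrow$ swap so this lands on $\beta M(\beta\downarrow(\cdot))\oplus M(\beta\uparrow(\cdot))$, matching $\CT_s\circ\CC$; (iii) wall moved: both sides are direct sums $M(\wm{}{},\beta)\oplus M(\wp{}{},\beta)$ and Lemma \ref{kipp:wb} with Lemma \ref{kipp:arithmetik}.a) identifies the indices.

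For the inverse transformation $\eta^N$ I would simply take the same componentwise identifications read backwards; since each $\theta^N_A$ is (a permutation of summands of) the identity on underlying modules, $\eta^N_A\circ\theta^N_A=\id$ and $\theta^N_A\circ\eta^N_A=\id$ are immediate, and naturality in $N$ is clear because every $f\colon N\to N'$ in $\CK_k$ acts componentwise and $\CC$, $\CT_s$, $\CT^{\vee}_s$ all act on morphisms by reindexing/restricting these same component maps. The main obstacle I anticipate is purely organizational rather than deep: getting the twist functor $t_{w_0}$ and the $w_0(\beta)^+$ normalization to bookkeep correctly against the $\uparrow/\downarrow$ swap in case (ii), so that the ``$\beta$'' appearing in $\CT^{\vee}_s$'s shift of $M(\beta\uparrow A,\beta)$ really does become the ``$\beta$'' in $\CT_s$'s shift of $M(\beta\downarrow A,\beta)$ after transport; this is essentially the reason $\CC$ was defined with the asymmetric up/down convention in the first place, and Lemma \ref{kipp:wb} is the tool that makes it go through. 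Once case (ii) is pinned down, cases (i) and (iii) are routine.
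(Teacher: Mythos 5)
Your proposal follows essentially the same route as the paper: identify the $A$-components via Lemma \ref{kipp:arithmetik}.a), transfer the case distinction via Lemma \ref{kipp:arithmetik}.b) and Remark \ref{updown}, and then match the three cases for $(A,\beta)$-components using Lemma \ref{kipp:wb} to see that the $\uparrow/\downarrow$ asymmetry built into $\CC_{w_0}$ exactly absorbs the difference between $\CT_s$ and $\CT^{\vee}_s$ in the positive-alcove case, with the twist $t_{w_0}$ reconciling $\beta$ with $w_0(\beta)^+$. This is the paper's argument in all essentials, so no further comparison is needed.
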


\begin{proof}
	Let $N$ be an object in $\CK_k$ and we will use $w=w_0$ for convenience. By definition 
	\begin{align*}
		[\CC \circ \CT^{\vee}_s] N (A)&= N(\wm{(w.A)}{s})\oplus 
			N(\wp{(w.A)}{s})\text{ and }\\
		[\CT_s\circ \CC ] N (A)&=N(w.(\wm{A}{s}))\oplus N(w.(\wp{A}{s})).
	\end{align*}
By Lemma \ref{kipp:arithmetik}.a) these modules coincide. Let us now consider 
\begin{equation*}
	[\CC \circ\CT_s^{\vee}]N(A,\beta)=t_w \CT_s^{\vee} N(w.(\beta\uparrow A),w(\beta)^+).
\end{equation*}
As in Remark \ref{updown} and with Lemma \ref{kipp:arithmetik}.b) it is equivalent
\begin{align*}
	\beta \uparrow \w{A}{s}=\w{A}{s} &\iff \beta \uparrow\w{(\beta \uparrow A)}{s}=\w{(\beta \uparrow A)}{s}\\
	& \iff w(\beta)^+\uparrow \w{(w.(\beta\uparrow A))}{s}=\w{(w.(\beta\uparrow A))}{s}.
\end{align*}
Furthermore, let $\beta \uparrow \w{A}{s}=\w{A}{s}$ and $A=\wm{A}{s}$. We know that
\begin{equation*}
\w{(w.(\beta\uparrow A))}{s}=w.(\w{(\beta\uparrow A)}{s})=w.(\w{A}{s}),
\end{equation*}
where the second equation follows from Lemma \ref{wallcomb}. Hence
\begin{align*}
\{\wm{(w.(\beta\uparrow A))}{s},\wp{(w.(\beta\uparrow A))}{s}\}&=\{w.(\beta\uparrow A),w.A\}\\
&=\{w(\beta)^+\downarrow w.A, w.A\},
\end{align*}
which finally leads to the necessary statement
\begin{equation*}
A=\wm{A}{s}\iff w.(\beta \uparrow A)=\wm{(w.(\beta\uparrow A))}{s}.
\end{equation*}

	\ul{Case 1} ($\beta \uparrow \w{A}{s}=\w{A}{s}, A=\wm{A}{s}$, cf. Figure \ref{fig:cases}):
			By the preceding we obtain 
			\begin{equation*}
			[\CC\circ \CT_s^{\vee}] N (A,\beta)=t_w  \{(w(\beta)^+x + y, y)\mid x,y \in N(w.(\beta \uparrow A),w(\beta)^+)\}
			\end{equation*}
			which coincides with 
			\begin{equation*}
			[\CT_s \circ\CC] N(A,\beta)=\{(\beta x + y,y)\mid x,y\in t_w N(w.(\beta \uparrow A),w(\beta)^+\}.
			\end{equation*}							
						
	\ul{Case 2} ($\beta \uparrow \w{A}{s}=\w{A}{s}, A=\wp{A}{s}$):
			By definition
			\begin{align*}
				&[\CC\circ \CT^{\vee}_s] N (A,\beta)\\
				&=t_w [ N(w(\beta)^+\downarrow w.(\beta \uparrow A),w(\beta)^+)\oplus w(\beta)^+ N(w(\beta)^+ \uparrow w.(\beta \uparrow A), w(\beta)^+)],
			\end{align*}			
			while
			\begin{equation*}
				[\CT_s \circ\CC] N (A,\beta)=\beta t_w  N(w.A,w(\beta)^+)\oplus t_w N(w.(\beta \uparrow^2 A),w(\beta)^+).
			\end{equation*}
			But this coincides because with Lemma \ref{kipp:wb}
			\begin{align*}
				w(\beta)^+\downarrow w.(\beta \uparrow A)&=w.(\beta \uparrow^2 A)	 \text{ and }\\
				w(\beta)^+\uparrow w.(\beta\uparrow A)&=w.A.		
			\end{align*}
			
	\ul{Case 3} ($\beta \uparrow \w{A}{s}\neq\w{A}{s}$):
	\begin{equation*}
				[\CC \circ\CT^{\vee}_s] N (A,\beta)=t_w [ N(\wm{(w.(\beta \uparrow A))}{s},w(\beta)^+)\oplus N(\wp{(w.(\beta \uparrow A))}{s}, w(\beta)^+)],
			\end{equation*}
			while
			\begin{equation*}
				[\CT_s \circ\CC] N (A,\beta)= t_w N(w.(\beta \uparrow \wm{A}{s}),w(\beta)^+)\oplus t_w N(w.(\beta \uparrow \wp{A}{s}),w(\beta)^+).
			\end{equation*}
			Because of Lemma \ref{kipp:arithmetik}.a) 
			\begin{equation*}
				\{\wm{(w.(\beta\uparrow A))}{s}, \wp{(w.(\beta\uparrow A))}{s}\}=\{w.(\wm{(\beta\uparrow A)}{s}),w.(\wp{(\beta\uparrow A)}{s})\}
			\end{equation*}
			and it is enough to see that
			\begin{equation*}
				\{\wm{(\beta\uparrow A)}{s},\wp{(\beta\uparrow A)}{s}\}=\{\beta \uparrow\wm{A}{s},\beta \uparrow\wp{A}{s}\}.				
			\end{equation*}
			This is true as it was observed in  Lemma \ref{wallcomb}.
\end{proof}

This is not true for general $w\in \CW$. For $w\neq w_0$ we cannot exclude the case where $w(\beta)\in R^{+}$. But this means a violation in the case where $\beta \uparrow \w{A}{s}=\w{A}{s}$, $A=\wp{A}{s}$ which means in formulas
\begin{equation*}
[\CC \circ\CT^{\vee}_s] N (A,\beta)= t_w N(w.(\beta\downarrow A),w(\beta)^+)\oplus \beta t_wN(w.(\beta \uparrow A),w(\beta)^+)
\end{equation*}
and on the other side
\begin{equation*}
[\CT_s\circ\CC] N (A,\beta)=\beta t_w N(w.(\beta\downarrow A),w(\beta)^+)\oplus t_w N(w.(\beta \uparrow A),w(\beta)^+).
\end{equation*}

\section{Some tilting self-dual objects in $\CM_k$}

\subsection{The object $Q_0$}
Properties of objects in $\CM_k^{\circ}$ certainly depend on the object $Q_0$ from where the translation functors span $\CM_k^{\circ}$. This gives rise to the study of $Q_0$.
\begin{lemma}
	$Q_0$ is indecomposable. 
\end{lemma}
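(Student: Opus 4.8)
The plan is to show that the endomorphism ring $\End_{\CK_k}(Q_0)$ has no idempotents other than $0$ and $\id$, which for an object with the finiteness properties of Lemma \ref{fingen:torfre} is equivalent to indecomposability. So first I would take an arbitrary $f=(f_A)_{A\in\SA}\in\End_{\CK_k}(Q_0)$ with $f\circ f=f$ and analyse what the defining conditions of a morphism in $\CK_k$ force. Since $Q_0(A)=S_k^\emptyset$ exactly for $A=A_w$ with $w\in\CW$ and is zero otherwise, $f$ is really a tuple of graded $S_k^\emptyset$-module endomorphisms $f_{A_w}\colon S_k^\emptyset\to S_k^\emptyset$, $w\in\CW$; each such endomorphism is multiplication by a fixed element $c_w\in S_k^\emptyset$ of degree $0$, i.e. $c_w\in k$. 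So the whole question reduces to: which tuples $(c_w)_{w\in\CW}\in k^{\CW}$ are compatible with all the $S_k^\beta$-submodules $Q_0(A,\beta)$, and then to showing that compatibility forces all $c_w$ equal.

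The key step is to exploit the "edge" data $Q_0(A,\beta)$ to propagate equality of the scalars $c_w$ across the alcoves. I would go through the three nonzero cases in the definition of $Q_0(A,\beta)$. For $A=A_w\in\SA_\beta^-$ the submodule is $\{(\beta x+y,y)\mid x,y\in S_k^\beta\}\subseteq Q_0(A)\oplus Q_0(\beta\uparrow A)=S_k^\emptyset\oplus S_k^\emptyset$; requiring $f_A\oplus f_{\beta\uparrow A}$ to preserve this submodule means $(c_A(\beta x+y),c_{\beta\uparrow A}\,y)$ must again be of the form $(\beta x'+y',y')$, which forces $c_A=c_{\beta\uparrow A}$ (compare the "$y$-part": $y'=c_{\beta\uparrow A}y$, and then $\beta x'=c_A\beta x+(c_A-c_{\beta\uparrow A})y$ must be divisible by $\beta$ in $S_k^\beta$, i.e. $(c_A-c_{\beta\uparrow A})y\in\beta S_k^\beta$ for all $y$, hence $c_A=c_{\beta\uparrow A}$ since $\beta$ is a non-unit non-zero-divisor in $S_k^\beta$). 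The other two cases ($A\in\beta\uparrow\SA_\beta^-$ with $Q_0(A,\beta)=\beta S_k^\beta\hookrightarrow S_k^\emptyset\oplus 0$, and $A\in\beta\downarrow\SA_\beta^-$ with $Q_0(A,\beta)=S_k^\beta\hookrightarrow 0\oplus S_k^\emptyset$) impose no new constraint relating distinct scalars but are needed to check $f$ is a well-defined morphism at all. Then I need a connectivity argument: the relations $c_{A_w}=c_{s_{\beta,0}.A_w}$ obtained from the $\SA_\beta^-$-edges, ranging over $\beta\in R^+$ and $w\in\CW$, link all the alcoves $A_w$, $w\in\CW$, into one class — essentially because $\CW$ is generated by the simple reflections $s_\alpha=s_{\alpha,0}$ and for each such reflection and each $w$ one of $A_w$, $s_\alpha.A_w$ lies in $\SA_\beta^-$ for the appropriate $\beta=\alpha$ (the finite Weyl chambers through the origin), so one of the two is in $\SA_\alpha^-$ and the $\SA_\alpha^-$-relation applies. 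This yields $c_w=c$ for a single constant $c\in k$, hence $f=c\cdot\id$; idempotency gives $c\in\{0,1\}$.

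The step I expect to be the main obstacle is the combinatorial connectivity claim: verifying precisely that the relations coming from the three-part description of $Q_0(A,\beta)$ — in particular only those $\beta$ and $A$ for which $A\in\SA_\beta^-$, so that the "crosswise" submodule $\{(\beta x+y,y)\}$ appears — suffice to connect all the $|\CW|$ alcoves $A_w$. One has to make sure that for adjacent $w$, $ws_\alpha$ (differing by a simple reflection $s_\alpha\in\CS$), the corresponding pair of alcoves is joined by a hyperplane $H_{\gamma,0}$ of the right type with one of them on the negative side, so that the "$=$" relation rather than a vacuous one applies; this is exactly the local alcove geometry encoded in Figure \ref{fig:cases2} and one should cite the alcove facts from \cite{humphreys1990reflection} (or Lemma \ref{wallcomb}) to nail it down. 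Once that is in place the algebra is routine. An alternative, possibly cleaner route that avoids some of this is to instead observe that $Q_0$ is defined over $\ZZ$ (or over the prime field) with all graded pieces in a single degree, reduce to computing $\End$ as a finite-dimensional $k$-algebra concentrated in degree $0$, and then run the same scalar-propagation argument; but the geometric connectivity input is unavoidable either way.
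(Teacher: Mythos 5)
Your argument is the paper's proof recast in terms of idempotent endomorphisms rather than supports of direct summands: the paper decomposes $Q_0=V\oplus W$, observes that the crosswise submodule $\{(\beta x+y,y)\mid x,y\in S_k^{\beta}\}$ forces $A$ and $\beta\uparrow A$ to lie in the support of the same summand (it ``links'' them, in the paper's terminology), and then propagates over all $\beta\in R^{+}$ starting from $w_0$ to get $\supp V=\CW$; your relation $c_A=c_{\beta\uparrow A}$ along exactly these edges is the same mechanism, so the two proofs are essentially identical. One inaccuracy to repair: a degree-$0$ element of $S_k^{\emptyset}$ need \emph{not} lie in $k$ once the root system has rank at least $2$ (e.g.\ $\alpha\gamma^{-1}$ for distinct positive roots has degree $0$), so the reduction to tuples in $k^{\CW}$ does not follow from gradedness alone. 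This is harmless here: either restrict to idempotents from the start, so that $c_w^2=c_w$ in the integral domain $S_k^{\emptyset}$ gives $c_w\in\{0,1\}\subset k$ directly, or note that compatibility with the edges forces $c_w\in\bigcap_{\beta\in R^{+}}S_k^{\beta}=S_k$, whose degree-$0$ part is $k$; once $c_A-c_{\beta\uparrow A}\in k$, your divisibility-by-$\beta$ step is valid since a nonzero scalar is a unit in $S_k^{\beta}$ while $\beta S_k^{\beta}$ is a proper ideal. Finally, in the connectivity step the root separating $A_w$ from $s_{\beta,0}.A_w=A_{ws_{\alpha}}$ is $\beta=w(\alpha)^{+}$ rather than $\alpha$ itself, but the geometric input you correctly identify as essential --- that exactly one of these two alcoves lies in $\SA^{-}_{\beta}$, so the crosswise edge applies --- is true and is precisely what the paper's proof uses.
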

\begin{proof}
	Assume that $Q_0$ was decomposable, i.e. $Q_0=V\oplus W$ with $V\neq 0\neq W$. 
	We define the support of an object $M$ as follows:	
	\begin{equation*}
		\supp\,M=\{A \in \SA \mid M(A) \neq 0\}.
	\end{equation*}
	In our case we obtain 
	\begin{equation*}
		\CW = \supp\, Q_0= \supp\, V\,\dot\cup\, \supp\, W,
	\end{equation*}
	as the rank of each $Q_0 (A)$ is $\leq 1$. For both $V$ and $W$ the support is non-empty. But for any $w \in \supp\, V$ (resp. $W$) follows $s_{\beta}w \in \supp\, V$ (resp. $W$) because either $Q_0 (A,\beta)$ or $Q_0(\beta \uparrow A, \beta)$ equals to $\{(\beta x+y,y)\mid x,y \in S^{\beta}_k\}$. An object $M$ \emph{links} the alcoves $A$ and $\beta\uparrow A$ if $M(A,\beta)\hookrightarrow M(A)\oplus M(\beta \uparrow A)$ does not decompose into $S_k^{\beta}$-submodules of $M(A)\oplus 0$ and $0\oplus M(\beta\uparrow A)$. Hence $Q_0$ links $A$ and $\beta \uparrow A$. As $\rk\, Q_0 (A)=\rk\, Q_0(\beta \uparrow A)=1$ this implies that $A$ and $\beta \uparrow A$ lie in the support of the same indecomposable object. 
Take now $w_0 \in \supp\, V$ and via induction over all $\beta \in R^{+}$ we receive 
	\begin{equation*}	
		\CW=\supp\, V.
	\end{equation*}
	This contradicts the decomposition of $Q_0$.	 
\end{proof}

The next step is to compute $[\CC\circ \cD]Q_0$. In the $S^{\emptyset}_k$-components it is easily seen to be
\begin{align*}
[\CC\circ \cD] Q_0(A)&=
		\begin{cases}
	\Hom_{S^{\emptyset}_k}(S^{\emptyset}_k,S^{\emptyset}_k)& \text{if }A = A_w \text{ for } w\in \CW,\\
	0&\text{else}
		\end{cases}
\end{align*}
because $w_0 w \in \CW \iff w \in \CW$.
For any $\beta \in R^{+}$ and $A \in \SA$
\begin{align*}
[\CC\circ \cD] Q_0(A,\beta)&=
		\begin{cases}
	\Hom_{S^{\beta}_k}(\{(\beta x+y,y)\mid x,y \in S^{\beta}_k\},S^{\beta}_k)\\
	 \qquad\qquad\qquad\text{if }w_0.(\beta\uparrow A)\in \SA_{-}^{w(\beta)^+},\\
	\Hom_{S^{\beta}_k}(\beta S^{\beta}_k,S^{\beta}_k)\\
	\qquad\qquad\qquad \text{if }w_0.(\beta\uparrow A)\in w(\beta)^+ \uparrow \SA_{-}^{w(\beta)^+},\\
	\Hom_{S^{\beta}_k}(S^{\beta}_k,S^{\beta}_k)\\
	 \qquad\qquad\qquad\text{if }w_0.(\beta\uparrow A)\in w(\beta)^+ \downarrow \SA_{-}^{w(\beta)^+},\\
		0\qquad\qquad\qquad\qquad\qquad\text{else}.
		\end{cases}
\end{align*}

\begin{remark}
\begin{enumerate}
\item[a)] $w_0.(\beta\uparrow A) \in \SA_{-}^{w(\beta)^+}$ is equivalent to $A \in \SA_{-}^{\beta}$. That is because of the following arguments' chain: 
\begin{align*}
w_0.(\beta \uparrow A)&= w(\beta)^+ \downarrow w_0 .A \in \SA_{-}^{w(\beta)^+} \\
&\iff w_0 .A \in w(\beta)^+ \uparrow \SA_{-}^{w(\beta)^+} \\
&\iff A \in \beta \downarrow \SA_{+}^{\beta} \\
&\iff A\in \SA_{-}^{\beta},
\end{align*}
where we used Lemma \ref{kipp:wb}.
\item[b)]  The equivalence of $w_0.(\beta\uparrow A) \in w(\beta)^+ \uparrow \SA_{-}^{w(\beta)^+}$ and $A \in \beta \downarrow\SA_{-}^{\beta}$ is received by 
using part a) and replacing $A$ by $\beta \uparrow A$.
\end{enumerate}
\end{remark}
By this remark, $[\CC\circ \cD] Q_0$ transforms in the $\beta$-components into 
\begin{align*}
[\CC\circ \cD]Q_0(A,\beta)&=
		\begin{cases}
	\Hom_{S^{\beta}_k}(\{(\beta x+y,y)\mid x,y \in S^{\beta}_k\},S^{\beta}_k)\\
	\qquad\qquad\qquad\qquad\qquad
	 \text{if }A\in \SA_{-}^{\beta},\\
	\Hom_{S^{\beta}_k}(S^{\beta}_k,S^{\beta}_k)\\
	\qquad\qquad\qquad\qquad\qquad \text{if }A\in \beta\uparrow \SA_{-}^{\beta},\\
	\Hom_{S^{\beta}_k}(\beta S^{\beta}_k,S^{\beta}_k)\\
	\qquad\qquad\qquad\qquad\qquad \text{if }A\in \beta \downarrow \SA_{-}^{\beta},\\
		0	\qquad\qquad\qquad\qquad\text{else}.
		\end{cases}
\end{align*}

\begin{lemma}\label{q0dual}
$Q_0$ is tilting self-dual up to a shift by $-2l(w_0)$, which means in formula
\begin{equation*}
	[\CC \circ \cD] Q_0\cong Q_0\{-2l(w_0)\}.
\end{equation*}
\end{lemma}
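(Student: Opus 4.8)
The plan is to exhibit an explicit isomorphism
$\Phi\colon [\CC\circ\cD]Q_0\to Q_0\{-2l(w_0)\}$ in $\CK_k$; since all components of $[\CC\circ\cD]Q_0$ have just been computed, what remains is to assemble them. On the $\emptyset$-components there is essentially no freedom: for $A\notin\CW.A_e$ both sides vanish, and for $A=A_w$ with $w\in\CW$ both $[\CC\circ\cD]Q_0(A_w)$ and $Q_0(A_w)\{-2l(w_0)\}=S^{\emptyset}_k\{-2l(w_0)\}$ are free of rank one over $S^{\emptyset}_k$. As every positive root is a unit of $S^{\emptyset}_k$, one has $S^{\emptyset}_k\{l\}\cong S^{\emptyset}_k$ for every even $l$, so the global shift is invisible at this level; I set $\Phi_{A_w}$ to be multiplication by $(-1)^{l(w)}\pi$, where $\pi:=\prod_{\alpha\in R^+}\alpha$ is a homogeneous unit of $S^{\emptyset}_k$ of degree $2l(w_0)=2|R^+|$, precomposed with the semilinear twist by $w_0^{-1}$ that $\CC=\CC_{w_0}$ carries through $t_{w_0}$. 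This is an isomorphism of degree $0$, and I expect both the exponent of $\pi$ and the signs $(-1)^{l(w)}$ to be forced by the next step rather than freely chosen.

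The substance of the proof is the verification that $\Phi=(\Phi_A)_{A\in\SA}$, together with its inverse, is a morphism in $\CK_k$; once this is known, $\Phi$ is an isomorphism and the lemma follows. Concretely one must check, for every $\beta\in R^+$ and $A\in\SA$, that $\Phi_A\oplus\Phi_{\beta\uparrow A}$ carries $[\CC\circ\cD]Q_0(A,\beta)$ into $Q_0\{-2l(w_0)\}(A,\beta)$, and the reverse for $\Phi^{-1}$. Using the explicit four-case description of $[\CC\circ\cD]Q_0(A,\beta)$ (the cases $A\in\SA_-^{\beta}$, $A\in\beta\uparrow\SA_-^{\beta}$, $A\in\beta\downarrow\SA_-^{\beta}$, and ``otherwise'') this is a finite computation. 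The ``otherwise'' case is vacuous, and the two rank-one cases reduce -- after dualising the relevant piece of $Q_0$ and applying $\CC$, which replaces $\beta$ by $\gamma:=w_0(\beta)^+$, interchanges the two alcove summands, and twists the scalars by $w_0$ -- to the statement that multiplication by $\pi$ (times a unit of the appropriate $S^{\gamma}_k$) sends $\gamma^{-1}S^{\gamma}_k$ onto $S^{\beta}_k$, respectively $S^{\gamma}_k$ onto $\beta S^{\beta}_k$, which holds because $\pi$ has a simple zero along each root hyperplane. The decisive case is $A=A_w\in\SA_-^{\beta}$: here $Q_0(A_w,\beta)=\{(\beta x+y,y)\mid x,y\in S^{\beta}_k\}$, while $[\CC\circ\cD]Q_0(A_w,\beta)$ is, by the computation above, obtained by dualising a copy of $\{(\gamma x+y,y)\}$ and realising $\cD\{(\gamma x+y,y)\}=\{(\tfrac b\gamma,\,a-\tfrac b\gamma)\mid a,b\in S^{\gamma}_k\}$ inside $[\CC\circ\cD]Q_0(A_w)\oplus[\CC\circ\cD]Q_0(A_{s_\beta w})$ (up to the twist and the interchange of the two summands). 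One then matches this with the image of $\{(\beta x+y,y)\}$ under $\Phi_{A_w}\oplus\Phi_{A_{s_\beta w}}$. Comparing the modules forces each $\Phi_{A_v}$ to be multiplication by a unit having a simple zero along $w_0(\beta)^+$ for each $\beta$ ``adjacent'' to $A_v$; letting $\beta$ run over all of $R^+$ this forces the shape $\pi$ up to a scalar, and hence the shift $\{-2l(w_0)\}$. It further forces the scalars at two wall-adjacent alcoves to differ by a sign, which is compatible precisely because $l(w)$ and $l(s_\beta w)$ have opposite parity and $\ch k\neq2$, and the scalars may then be normalised to $(-1)^{l(w)}$.

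The one genuinely delicate point is the bookkeeping around the twist functor $t_{w_0}$ inside $\CC$ -- it is responsible both for the interchange of the two alcove summands in $\CC N(A,\beta)$ and for the $w_0^{-1}$-semilinearity of the maps $\Phi_{A_w}$ -- combined with the fact that the generators of the various modules $Q_0(A,\beta)$ and $[\CC\circ\cD]Q_0(A,\beta)$ sit in different internal degrees. One has to check that the global shift $\{-2l(w_0)\}$ is exactly the total internal degree accumulated by $\pi$ as $\Phi_{A_w}$ is transported across the $|R^+|=l(w_0)$ reflection hyperplanes separating $A_e$ from $A_{w_0}$. Once the case check is carried through, $\Phi$ and $\Phi^{-1}$ are morphisms in $\CK_k$, so $\Phi$ is an isomorphism, which is the assertion.
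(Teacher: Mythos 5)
Your proposal is correct and takes essentially the same route as the paper: the paper's proof likewise defines the isomorphism componentwise as multiplication by $(-1)^{l(w)}\delta^{\pm 1}$ with $\delta=\prod_{\alpha\in R^+}\alpha$ and checks compatibility with the $(A,\beta)$-submodules in the same three nontrivial cases. In particular, your identification of the dual of $\{(\beta x+y,y)\}$, the role of $\delta$ vanishing to first order along each root hyperplane in the two rank-one cases, and the alternating sign $(-1)^{l(w)}$ across the wall in the case $A\in\SA_-^{\beta}$ all match the paper's computation.
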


\begin{proof}
		We will explicitly define the isomorphism between those two objects. Let $\delta= \prod_{\alpha \in R^{+}} \alpha$ and the morphisms 
		\begin{equation*}
			[\CC \circ \cD] Q_0 \overset{g}{\underset{f}{\rightleftarrows}} Q_0\{-2l(w_0)\}
		\end{equation*}
be given by 
	\begin{align*}
		f_{A_w} \colon Q_0 (A_w) &\to [\CC \circ \cD] Q_0 \{-2l(w_0)\}(A_w),\\
		q &\mapsto (-1)^{l(w)} \delta^{-1} q \cdot\id_{S_k^{\emptyset}}
	\end{align*}
and 
		\begin{align*}
		g_{A_w} \colon [\CC \circ \cD] Q_0 (A_w) &\to  Q_0\{-2l(w_0)\} (A_w),\\
		\varphi &\mapsto (-1)^{l(w)} \delta \varphi(1).
	\end{align*}
	Indeed, both compositions yield the identity. Now the remaining question is if they are well-defined morphisms. We will study this question case by case. 
	
		\ul{Case 1} ($A \in \SA_{-}^{\beta}$): 
		This is the case where the signs in $f$ and $g$ really matter. Let $\varphi$ lie in
		\begin{equation*}
\Hom_{S^{\beta}_k}(\{(\beta x + y,y)\mid x,y\in S^{\beta}_k\}, S^{\beta}_k)\hookrightarrow [\CC \circ \cD]Q_0 (A) \oplus [\CC \circ \cD]Q_0 (\beta \uparrow A)
		\end{equation*}
		where it is embedded to
		\begin{equation*}
		(\varphi_1,\varphi_2)=(\varphi(\beta,0)\beta^{-1}\cdot \id_{S_k^{\emptyset}}, (\varphi(1,1)-\varphi(\beta,0)\beta^{-1})\cdot\id_{S_k^{\emptyset}}).
		\end{equation*}
		If $A=A_w$ and $\beta \uparrow A=A_v$ then 
		\begin{equation*}
			\{(-1)^{l(w)}, (-1)^{l(v)}\}=\{1,-1\}.
		\end{equation*} 
		By this fact, applying $g_A\oplus g_{\beta \uparrow A}$ yields
		\begin{equation*}
			(x,y)=(\pm \delta \varphi(\beta,0)\beta^{-1}, \mp \delta (\varphi(1,1)- \varphi (\beta,0) \beta^{-1}))\in S^{\beta}_k\oplus S^{\beta}_k
		\end{equation*}
		which additionally satisfy $x-y=\pm \delta \varphi(1,1) \in \beta S^{\beta}_k$. 
		
		Now, let $(\beta x + y, y)\in Q_0 (A,\beta)$. Then this is mapped by $f_A\oplus f_{\beta\uparrow A}$ to
		\begin{equation*}
			(\pm \delta^{-1}(\beta x+y)\cdot\id_{S^{\emptyset}_k}, \mp \delta^{-1}y\cdot\id_{S^{\emptyset}_k}).
\end{equation*}					
Applying this element of $[\CC \circ \cD]Q_0 (A)\oplus [\CC \circ \cD]Q_0 (\beta\uparrow A)$ to $(t,t+\beta s)$ yields
\begin{equation*}
	\pm \delta^{-1} \beta (xt-ys) \in S^{\beta}_k.
\end{equation*}

	\ul{Case 2} ($A \in \beta \uparrow\SA_{-}^{\beta}$ and $\Hom_{S^{\beta}_k}(S^{\beta}_k,S^{\beta}_k)\hookrightarrow [\CC \circ \cD] Q_0 (A)$):
	
		Let $\varphi \in \Hom_{S^{\beta}_k}(S^{\beta}_k, S^{\beta}_k)$ and map this by $g_A$ to
		\begin{equation*}
			(-1)^{l(w)}\delta \varphi(1)\in  \beta S^{\beta}_k
		\end{equation*}
		because $\varphi(1) \in S^{\beta}_k$. Consider now the map 
		\begin{equation*}
			f_A(\beta s)=(-1)^{l(w)}\delta^{-1} \beta s \cdot \id_{S^{\emptyset}_k}.
		\end{equation*}
		Restricting this to $S^{\beta}_k$ yields an element in $\Hom_{S^{\beta}_k}(S^{\beta}_k,S^{\beta}_k)$.
		
		\ul{Case 3} ($A \in \beta \downarrow \SA_{-}^{\beta}$ and $\Hom_{S^{\beta}_k}(\beta S^{\beta}_k,S^{\beta}_k)\hookrightarrow  [\CC \circ \cD]Q_0 (\beta \uparrow A)$):
		
		Let $\varphi \in \Hom_{S^{\beta}_k}(\beta S^{\beta}_k, S^{\beta}_k)$ and map this by $g_{\beta \uparrow A}$ to
		\begin{equation*}
			(-1)^{l(w)}\delta \varphi(\beta)\beta^{-1}\in S^{\beta}_k
		\end{equation*}
		because $\varphi(\beta) \in S^{\beta}_k$ and $\delta \beta^{-1} \in S^{\beta}_k$. Consider now the map 
		\begin{equation*}
			f_{\beta \uparrow A}(s)=(-1)^{l(w)}\delta^{-1} s \cdot \id_{S^{\emptyset}_k}.
		\end{equation*}
		Restricting this to $\beta S^{\beta}_k$ yields an element in $\Hom_{S^{\beta}_k}(\beta S^{\beta}_k,S^{\beta}_k)$.		
	\end{proof}

\subsection{$\CM_k^{\circ}$ is a subcategory of $\CM_k$}
Let $s \in \CS \subset \CW$ be a simple reflection. Then $Q_0$ is special in the sense that for each alcove $A\in \SA$ the $S^{\emptyset}_k$-modules $Q_0(\wm{A}{s})$ and $Q_0(\wp{A}{s})$ coincide (this is definitely false for the remaining affine reflection $\widehat{s}\in \widehat{\CS}\setminus \CS$). Hence, there is the diagonal embedding
\begin{align*}
\Delta_A \colon Q_0(A)&\to Q_0(\wm{A}{s})\oplus Q_0(\wp{A}{s})=\CT_s Q_0(A)\\
t&\mapsto (t,t)
\end{align*}
and this even defines a morphism $\Delta$ from $Q_0$ to $\CT_s Q_0$ as we will see in
\begin{lemma}\label{homsvonq}
Let $K\in \CK_k$, $f\in \Hom_{\CK_k}(Q_0, K)$ and $s\in \CS$. Then $\w{f}{s} = (\CT_s f) \circ \Delta \in \Hom_{\CK_k}(Q_0, \CT_s K)$.
\end{lemma}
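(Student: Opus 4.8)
The plan is to factor $\w{f}{s}$ as a composition of morphisms in $\CK_k$. Since $\CT_s$ is a functor, $\CT_s f\in\Hom_{\CK_k}(\CT_s Q_0,\CT_s K)$, so it suffices to show that the diagonal family $\Delta=(\Delta_A)_{A\in\SA}$, $\Delta_A(t)=(t,t)$, is a morphism $Q_0\to\CT_s Q_0$ in $\CK_k$; then $\w{f}{s}=(\CT_s f)\circ\Delta$ is a composition of $\CK_k$-morphisms and lands in $\Hom_{\CK_k}(Q_0,\CT_s K)$. (The special case $K=Q_0$, $f=\id$ recovers the assertion, made just before the lemma, that $\Delta$ itself is a morphism.) So the content is to check the two defining conditions for $\Delta$ to be a morphism of $\CK_k$.

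The first condition is immediate: $\w{A}{s}$ is a wall of $A$, so $\{\wm{A}{s},\wp{A}{s}\}=\{A,As\}$, where $As$ is the alcove across $\w{A}{s}$; writing $A=A_w$ we have $As=A_{ws}$, and as $s\in\CS$ and $\CW$ is a subgroup, $w\in\CW\Leftrightarrow ws\in\CW$. Hence $Q_0(\wm{A}{s})=Q_0(\wp{A}{s})=Q_0(A)$ (all equal to $S^\emptyset_k$ or all zero), and $\Delta_A$ is a well-defined graded $S^\emptyset_k$-homomorphism of degree $0$.

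The real work is the second condition: for each $A\in\SA$ and $\beta\in R^+$ one must verify that $\Delta_A\oplus\Delta_{\beta\uparrow A}$ maps $Q_0(A,\beta)\hookrightarrow Q_0(A)\oplus Q_0(\beta\uparrow A)$ into $\CT_s Q_0(A,\beta)\hookrightarrow\CT_s Q_0(A)\oplus\CT_s Q_0(\beta\uparrow A)$. I would split along the three cases in the definition of $\CT_s Q_0(A,\beta)$, i.e. according to whether $\beta\uparrow\w{A}{s}\ne\w{A}{s}$ (equivalently, the type of $\w{A}{s}$ is not $\beta$) or the type of $\w{A}{s}$ is $\beta$ with $A=\wm{A}{s}$, resp. $A=\wp{A}{s}$; within the latter two there are a few further sub-cases according to the position of $A$ relative to $\beta$, but whenever $Q_0(A,\beta)=0$ there is nothing to prove. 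Lemma \ref{wallcomb} is used throughout to identify the walls involved (e.g. $\w{\beta\uparrow A}{s}=\beta\uparrow\w{A}{s}$ when the type is not $\beta$, and $\w{\beta\uparrow A}{s}=\w{A}{s}$ when it is) together with the pairs of negative/positive alcoves attached to them. The point that makes the case $\beta\uparrow\w{A}{s}\ne\w{A}{s}$ go through is elementary: two adjacent alcoves are separated by exactly one hyperplane, namely the one containing their common wall, so crossing a wall of type $\ne\beta$ never moves an alcove across a $\beta$-hyperplane and therefore leaves unchanged its combinatorial position relative to $\beta$ (membership in $\SA_\beta^-$, in $\beta\uparrow\SA_\beta^-$, in $\beta\downarrow\SA_\beta^-$, or in none of these). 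Consequently $Q_0(\wm{A}{s},\beta)$ and $Q_0(\wp{A}{s},\beta)$ are literally the same submodule of $S^\emptyset_k\oplus S^\emptyset_k$, and the image $(m_1,m_1,m_2,m_2)$ of $(m_1,m_2)\in Q_0(A,\beta)$ visibly lies in the crosswise copy $Q_0(\wm{A}{s},\beta)\oplus Q_0(\wp{A}{s},\beta)=\CT_s Q_0(A,\beta)$. In the cases where the type of $\w{A}{s}$ is $\beta$, the nonvanishing of $Q_0(A,\beta)$ forces $A$ (or $\beta\downarrow A$, or $\beta\uparrow A$) into one of the sets $\SA_\beta^{\pm}$, so all the modules $Q_0(\cdot,\beta)$ entering $\CT_s Q_0(A,\beta)$ are explicitly known; using $Q_0(A)=Q_0(\beta\uparrow A)=S^\emptyset_k$ and torsion-freeness one then rewrites the diagonal image in the required normal form (for instance as $(\beta X+Y,Y)$ with $X$, $Y\in Q_0(A,\beta)$ the diagonals of the two coordinates of $(m_1,m_2)$) in one line.

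The only genuine difficulty is bookkeeping: one must keep track of which coordinate of the image falls into which summand of $\CT_s Q_0(A)$ and of $\CT_s Q_0(\beta\uparrow A)$ (the ``parallel'' versus ``crosswise'' conventions in the definition of $\CT_s$), and match these placements against the alcove identities of Lemma \ref{wallcomb}. It is convenient to record once at the outset the elementary fact that $\langle\beta,v\rangle\in(-1,1)$ for $v$ in any $\CW$-alcove, which underlies the identity $\SA_\beta^+=\beta\uparrow\SA_\beta^-$ and the description of $\beta\downarrow\SA_\beta^-$ needed in the sub-case analysis; the algebra inside each individual case is trivial.
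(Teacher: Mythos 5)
Your proposal is correct and follows essentially the same route as the paper: reduce to showing that the diagonal $\Delta\colon Q_0\to\CT_s Q_0$ is a morphism in $\CK_k$, handle the case $\beta\uparrow\w{A}{s}\neq\w{A}{s}$ by noting that $\wm{A}{s}$ and $\wp{A}{s}$ lie in the same $\beta$-strip (so the two crosswise submodules $Q_0(\wm{A}{s},\beta)$ and $Q_0(\wp{A}{s},\beta)$ coincide), and rewrite the diagonal image in the normal form of $\CT_s Q_0(A,\beta)$ in the type-$\beta$ cases. Only your illustrative formula in the last step is slightly off: for $A=\wm{A}{s}$ and $(m_1,m_2)\in Q_0(A,\beta)$ one should take $Y=(m_2,m_2)$ and $\beta X=(m_1-m_2,m_1-m_2)$, i.e.\ $X$ is the diagonal of $(m_1-m_2)/\beta$ (which lies in $Q_0(A,\beta)$ since $m_1-m_2\in\beta S^{\beta}_k$), not the diagonal of $m_1$.
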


\begin{proof}
We need to verify that $\Delta \in \Hom_{\CK_k}(Q_0, \CT_s Q_0)$. If the wall of $A\in \SA$ is not of type $\beta$ then there is no hyperplane $H_{\beta,n}$ separating $\wm{A}{s}$ and $\wp{A}{s}$. This means 
\begin{equation*}
[\beta \uparrow \w{A}{s}\neq \w{A}{s}\text{ and } A\in \SA_-^{\beta}]\Rightarrow \{\wm{A}{s},\wp{A}{s}\}\subset \SA_-^{\beta}
\end{equation*}
where $\SA_-^{\beta}=\{A \in \SA\mid A=A_w \text{ with } w\in \CW, A \subset H_{\beta}^{-}\}$ as in \eqref{abminus} 
and equivalent statements for $A\in \SA_+^{\beta}$ and $A\in \beta \downarrow\SA_-^{\beta}$. As $\wm{A}{s}$ and $\wp{A}{s}$ share the same \emph{$\beta$-strip}, i.e. there is a $n\in \ZZ$ such that $\wm{A}{s},\wp{A}{s}\subset H^{-}_{\beta,n+1}\cap H^{+}_{\beta,n}$, the $S_{k}^{\beta}$-modules $Q_0(\wm{A}{s},\beta)$ and $Q_0(\wp{A}{s},\beta)$ can be identified canonically. Hence, for $(p,q)\in Q_0(A,\beta)$ the element $(\Delta_A \oplus \Delta_{\beta \uparrow A})(p,q)\in \CT_s Q_0(A,\beta)$. 

Furthermore, observe that for $\beta \uparrow \w{A}{s}=\w{A}{s}$ by definition of $\SA_-^{\beta}$ there are the implications
\begin{align*}
A\in \SA_-^{\beta} &\Rightarrow A=\wm{A}{s} \text{ and }\\
[A\in \SA_+^{\beta} \text{ or } A\in \beta \downarrow \SA_-^{\beta}] &\Rightarrow A=\wp{A}{s}.
\end{align*}
With this in mind we will verify the remaining cases. If $\beta\uparrow \w{A}{s}=\w{A}{s}$, $A=\wm{A}{s}$ and $(m,n) \in Q_0(A,\beta)$ then by definition of $Q_0$ we know that $(m,m), (n,n)\in Q_0(A,\beta)$ and $(m-n,m-n) \in \beta\, Q_0(A,\beta)$. Hence, $(m,m,n,n)\in Q_0(A,\beta)$. 

As $(1,1)S_k^{\beta}\subset \{(\beta x+y,y)\mid x,y\in S_k^{\beta}\}$ we obtain 
\begin{equation*}
(\Delta_A \oplus \Delta_{\beta\uparrow A}) (Q_0(A,\beta))\subseteq [\CT_s Q_0](A,\beta)
\end{equation*}
even in the case when $\beta \uparrow \w{A}{s}=\w{A}{s}, A=\wp{A}{s}$. Therefore, $\Delta$ is indeed a morphism. 

But if $\Delta\in \Hom_{\CK_k}(Q_0,\CT_s Q_0)$, then 
$(\CT_s f) \circ \Delta \in \Hom_{\CK_k}(Q_0, \CT_s K)$ since $(\CT_s f)\in \Hom_{\CK_k}(\CT_s Q_0, \CT_s K)$.
\end{proof}

This lemma is very helpful for defining various morphisms with domain $Q_0$. One application can be seen in 

\begin{proposition}\label{q0summand}
$Q_0$ is an object in $\CM_k$. In particular, $\CM_k^{\circ}$ is a full subcategory of $\CM_k$. 
\end{proposition}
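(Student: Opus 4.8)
The goal is to exhibit $Q_0$ as a direct summand of a Bott--Samelson object built from $P_0$, thereby placing $Q_0 \in \CM_k$; since $\CM_k^\circ$ is generated from $Q_0$ by translation functors, direct sums and summands, and $\CM_k$ is closed under exactly those operations, the ``in particular'' follows immediately. The natural candidate, as announced in the earlier remark, is $\CT_{s_j}\circ\cdots\circ\CT_{s_1}(P_0)$ for a reduced expression $w_0 = s_1\cdots s_j$ with each $s_i \in \CS$ (the \emph{finite} simple reflections — note $w_0 \in \CW$, so no truly affine reflection is needed). First I would record the easy half: there is a canonical morphism $P_0 \to Q_0$. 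Indeed $P_0(A_e) = S_k^\emptyset = Q_0(A_e)$ and $P_0$ is supported only on $A_e$, and one checks directly from the definitions of $P_0(A_e,\beta) = S_k^\beta$ and $Q_0(A_e,\beta) = \{(\beta x + y, y)\}$ (for $A_e$, which lies in $\SA_\beta^-$ or $\beta\downarrow\SA_\beta^-$ depending on $\beta$) that the inclusion $S_k^\beta \hookrightarrow Q_0(A_e,\beta)$ is compatible, so the identity on $A_e$-components extends to $\iota\colon P_0 \to Q_0$.

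The crucial tool is Lemma~\ref{homsvonq}: starting from $\iota \in \Hom_{\CK_k}(P_0, Q_0)$, one iterates $\iota^{(s_1,\dots,s_j)} := (\CT_{s_j}\iota)\circ\cdots\circ(\CT_{s_1}\iota)\circ\Delta_{\cdots}$, applying the lemma at each step (each $s_i$ lies in $\CS$, which is the hypothesis of the lemma applied to $\CT_{s_{i-1}}\circ\cdots\circ\CT_{s_1}(P_0)$ and the morphism landing in $\CT_{s_{i-1}}\circ\cdots\circ\CT_{s_1}(Q_0)$). This produces a morphism
\begin{equation*}
\phi\colon \CT_{s_j}\circ\cdots\circ\CT_{s_1}(P_0) \longrightarrow \CT_{s_j}\circ\cdots\circ\CT_{s_1}(Q_0).
\end{equation*}
Simultaneously one needs a morphism the other way, $\psi\colon Q_0 \to \CT_{s_j}\circ\cdots\circ\CT_{s_1}(Q_0)$, coming from iterating the diagonal embedding $\Delta$ (this is exactly $\w{(\id_{Q_0})}{s_j\cdots s_1}$ in the notation of Lemma~\ref{homsvonq}, with $K = Q_0$ and $f = \id$). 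The plan is then to show that $\CT_{s_j}\circ\cdots\circ\CT_{s_1}(Q_0) \cong Q_0\{n\} \oplus (\text{rest})$ for a suitable shift $n$ — more precisely, that the composite of $\psi$ with an appropriate projection, or a comparison of $\phi$ with $\psi$, splits off a copy of $Q_0$ inside the Bott--Samelson object.

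The main obstacle, and the step I would spend the most care on, is the support/rank bookkeeping that proves the splitting. One computes the support of $\CT_{s_j}\circ\cdots\circ\CT_{s_1}(P_0)$: translating $P_0$ (supported on $A_e$) by the $\CT_{s_i}$ for a reduced word of $w_0$ yields an object whose restriction to the alcoves $\{A_w : w \in \CW\}$ has each component of rank exactly $1$ (this is the ``lowest'' layer, arising from always choosing the ``$\wm{}{s}$'' branch), while on other alcoves the ranks may be larger. Since $Q_0$ is indecomposable (proved just above) with support precisely $\{A_w : w\in\CW\}$ and all ranks $1$, and since $\phi$ restricted to these alcoves is an isomorphism in each component (it is built from $\iota$, which is the identity on $A_e$, composed with the branch maps), one argues that $Q_0$ appears as a summand: a Fitting-type argument shows $\phi$ composed with a suitable map back is an idempotent picking out an indecomposable summand with the right support, which must be $Q_0$ by indecomposability and the rank count. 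The delicate points are (i) checking that the composite endomorphism of $Q_0$ obtained by going $Q_0 \to \text{Bott--Samelson} \to Q_0$ is an isomorphism — equivalently a nonzero scalar on the rank-$1$ component $Q_0(A_e) = S_k^\emptyset$ — which requires tracking the multiplications by roots $\beta$ introduced at each $\CT_{s_i}$-step against the inverses introduced when composing with the dual-type maps; and (ii) knowing that a direct summand of a special object supported on $\{A_w\}$ with all ranks $1$ and linking all pairs $A, \beta\uparrow A$ (in the sense of the indecomposability proof) is forced to be $Q_0$ itself. Once the composite is shown to be a unit in $\operatorname{End}(Q_0)$ — or at least an automorphism — idempotent-completeness of $\CM_k$ delivers $Q_0$ as a summand of \eqref{eq:bottsamelson}, hence $Q_0 \in \CM_k$, and then $\CM_k^\circ \subseteq \CM_k$ is automatic.
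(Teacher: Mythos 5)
Your overall strategy (realize $Q_0$ as a summand of $\CT_{s_l}\circ\cdots\circ\CT_{s_1}(P_0)$ for a reduced word of $w_0$ in $\CS$, and conclude via a Fitting-type argument on the indecomposable $Q_0$) is the right one, but two steps break down. First, your ``easy half'' goes in the wrong direction: there is no morphism $P_0\to Q_0$ restricting to the identity on the $A_e$-component. Since $A_e$ lies in the dominant chamber, $A_e\in\SA_{\beta}^{+}=\beta\uparrow\SA_{\beta}^{-}$ for \emph{every} $\beta\in R^{+}$ (not in $\SA_\beta^-$ or $\beta\downarrow\SA_\beta^-$ as you claim), so $Q_0(A_e,\beta)=\beta S_k^{\beta}$, and the identity would have to send $P_0(A_e,\beta)=S_k^{\beta}$ into $\beta S_k^{\beta}$, which fails. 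The morphism that exists is $f\colon Q_0\to P_0$ with $f_{A_e}=\id$. This matters beyond bookkeeping, because Lemma~\ref{homsvonq} only applies to morphisms \emph{with domain} $Q_0$: its whole content is that the diagonal $\Delta\colon Q_0\to\CT_sQ_0$ is a morphism for $s\in\CS$ (using that $Q_0(\wm{A}{s})=Q_0(\wp{A}{s})$, which is false for $P_0$), so your iteration scheme starting from a map out of $P_0$ has no analogue of $\Delta$ to compose with. Correctly applied, the lemma yields $\tilde f\colon Q_0\to\CT_{s_l}\circ\cdots\circ\CT_{s_1}(P_0)$, i.e.\ the \emph{inclusion} half of the splitting.

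Second, and more seriously, you never construct the retraction $\CT_{s_l}\circ\cdots\circ\CT_{s_1}(P_0)\to Q_0$; you defer it to ``a suitable map back'' and flag the root-multiplication bookkeeping as delicate, but no mechanism is offered, and the detour through $\CT_{s_j}\circ\cdots\circ\CT_{s_1}(Q_0)$ cannot supply one (splitting $Q_0$ off an object of $\CM_k^\circ$ is circular for the purpose of showing $\CM_k^\circ\subseteq\CM_k$). This retraction is the actual content of the proof, and it is obtained by duality: one builds a second morphism $\tilde g\colon Q_0\to[\CT_{s_l}\circ\cdots\circ\CT_{s_1}](\CC P_0)$ based at the component $w_0.A_e$ (again via Lemma~\ref{homsvonq}), applies $\CC\circ\cD$ to it, and then uses $\cD P_0\simeq P_0$, Proposition~\ref{theo:dualtrans} ($\cD\circ\CT_s\simeq\{-2\}\circ\CT_s^{\vee}\circ\cD$), Proposition~\ref{theo:kipptrans} ($\CC\circ\CT_s^{\vee}\simeq\CT_s\circ\CC$) and Lemma~\ref{q0dual} ($\CC\circ\cD(Q_0)\cong Q_0\{-2l(w_0)\}$) to identify source and target of $\CC\circ\cD(\tilde g)$ with $\CT_{s_l}\circ\cdots\circ\CT_{s_1}(P_0)$ and $Q_0$ up to shift. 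The composite $Q_0\to\CT_{s_l}\circ\cdots\circ\CT_{s_1}(P_0)\to Q_0$ is then an isomorphism on the $A_{w_0}$-component (where the Bott--Samelson object has rank one --- note that your claim of rank one on \emph{all} of $\{A_w\mid w\in\CW\}$ is false, e.g.\ the rank at $A_e$ grows with the word), hence non-nilpotent, hence an automorphism by indecomposability. Without the duality input your argument cannot be completed as stated.
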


\begin{proof}
Let $(s_1,\ldots, s_l)$ be a reduced expression for $w_0$, the longest element of the finite Weyl group. We want to show that $Q_0$ is a direct summand of $\CT_{s_l}\circ \ldots \circ\CT_{s_1}(P_0)$.
In particular, for all $i \in \{1,\ldots, l\}$ the reflection $s_i$ is in $\CS$ so that we can make use of Lemma \ref{homsvonq}. We start with two morphisms with domain $Q_0$, namely $f \in \Hom_{\CK_k}(Q_0, P_0)$ given by $f_{A_e}=\id_{S_k^{\emptyset}}$ and $g \in \Hom_{\CK_k}(Q_0, \CC P_0)$ given by $g_{w_0.A_{e}}=\id_{S_k^{\emptyset}}$. Thereon, we continue applying Lemma \ref{homsvonq} until we get the morphisms
\begin{align}
\tilde{f}\colon Q_0 &\to \CT_{s_l}\circ \ldots \circ\CT_{s_1}(P_0),\label{homnachp}\\
\tilde{g}\colon Q_0 &\to [\CT_{s_l}\circ \ldots \circ\CT_{s_1}](\CC P_0).\label{homnachgekippt}
\end{align}
In a next step, we dualize the morphism in \eqref{homnachgekippt} to 
\begin{equation*}
[\CC\circ\cD] (\tilde{g})\colon \CC\circ\cD\circ\CT_{s_l}\circ \ldots \circ\CT_{s_1}\circ\CC (P_0)\to \CC\circ\cD (Q_0).
\end{equation*}
With $\cD P_0 \simeq P_0$ and following Proposition \ref{theo:dualtrans} and Proposition \ref{theo:kipptrans}, there is an isomorphism 
\begin{equation*}
\eta \colon \CT_{s_l}\circ \ldots \circ\CT_{s_1} (P_0) \to \CC\circ\cD\circ\CT_{s_l}\circ \ldots \circ\CT_{s_1}\circ\CC (P_0)\{2l\}
\end{equation*}
and by Lemma \ref{q0dual} there is an isomorphism
\begin{equation*}
\xi \colon \CC\circ \cD (Q_0)\{2l\}\to Q_0.
\end{equation*}
Then 
\begin{equation*}
\xi_{A_{w_0}}\circ [\CC\circ\cD] (\tilde{g})_{A_{w_0}}   \circ \eta_{A_{w_0}}=\xi_{A_{w_0}}\circ \id_{S_k^{\emptyset}}\circ \eta_{A_{w_0}}
\end{equation*}
is an isomorphism on the $A_{w_0}$-component, too. (Doing explicit calculation even shows that this is the identity.) 

An endomorphism $h$ of $Q_0$ which is an isomorphism of degree $0$ at the $A_{w_0}$-component cannot be nilpotent. As $Q_0$ is indecomposable, $h$ must be an automorphism.

We obtain the sequence
\begin{equation*}
Q_0 \to \CT_{s_l}\circ \ldots \circ\CT_{s_1}(P_0) \to Q_0
\end{equation*} 
which builds an automorphism of $Q_0$. So $Q_0$ is indeed a direct summand of $\CT_{s_l}\circ \ldots \circ\CT_{s_1}(P_0) $.
\end{proof}

We can define $Q_0$ equivalently as the indecomposable direct summand of $\CT_{s_l}\circ \ldots \circ\CT_{s_1}(P_0)$ with $w_0 \in \supp \, Q_0$ where $(s_1,\ldots,s_l)$ is a reduced expression of $w_0$. This summand must be unique because the rank of $[\CT_{s_l}\circ \ldots \circ\CT_{s_1}]P_0(A_{w_0})$ is one. Then another consequence of the above proposition is that this definition does not depend on the choice of the reduced expression.

\subsection{Bott-Samelson like objects}
\begin{theorem}\label{dual:mainthm}
For a Bott-Samelson like object $M\in \CM_k^{\circ}$ there is an $z \in \ZZ$ such that
\begin{equation*}
		\CC \circ \cD (M) \cong M \{z\}.
\end{equation*}
\end{theorem}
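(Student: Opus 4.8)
The plan is to prove the statement by induction on the length $r$ of the sequence $s_1,\ldots,s_r$ defining the Bott--Samelson like object $M=\CT_{s_r}\circ\cdots\circ\CT_{s_1}(Q_0)\{n\}$. The base case $r=0$ is exactly Lemma \ref{q0dual}, which gives $\CC\circ\cD(Q_0)\cong Q_0\{-2l(w_0)\}$, together with the compatibility \eqref{dualshift} of $\cD$ with the shift functor to absorb the $\{n\}$. For the inductive step the key tools are Proposition \ref{theo:dualtrans}, which rewrites $\cD\circ\CT_s$ as $\{-2\}\circ\CT^{\vee}_s\circ\cD$, and Proposition \ref{theo:kipptrans}, which rewrites $\CC\circ\CT^{\vee}_s$ as $\CT_s\circ\CC$.

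\textbf{The induction step.} Suppose $M=\CT_s M'$ where $M'=\CT_{s_{r-1}}\circ\cdots\circ\CT_{s_1}(Q_0)\{n\}$ is a Bott--Samelson like object and $s=s_r\in\widehat{\CS}$. By the inductive hypothesis there is $z'\in\ZZ$ with $\CC\circ\cD(M')\cong M'\{z'\}$. Then I compute, using the two propositions and the fact that shifts commute with all the functors in sight (they are additive and compatible with $\{m\}$, cf.\ \eqref{dualshift}),
\begin{align*}
\CC\circ\cD(M)=\CC\circ\cD\circ\CT_s(M')
&\cong \CC\circ\{-2\}\circ\CT^{\vee}_s\circ\cD(M')\\
&\cong \{-2\}\circ\CC\circ\CT^{\vee}_s\circ\cD(M')\\
&\cong \{-2\}\circ\CT_s\circ\CC\circ\cD(M')\\
&\cong \{-2\}\circ\CT_s(M'\{z'\})\\
&\cong \CT_s(M')\{z'-2\}=M\{z'-2\}.
\end{align*}
So $z=z'-2$ works, and unwinding the recursion gives $z=-2l(w_0)-2r$ for the object built from a length-$r$ sequence; the extra $\{n\}$ contributes nothing to $z$ because $\cD$ turns $\{n\}$ into $\{-n\}$ and $\CC$ leaves it alone, yet these are then reabsorbed — more precisely one tracks $\CC\circ\cD(M'\{n\})\cong(\CC\circ\cD M')\{-n\}$ and carries the $\{-n\}$ through, which only affects the final shift, not the isomorphism class statement ``there is a $z$''.

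\textbf{Main obstacle.} The computation above is formally a chain of natural isomorphisms, so the real content is ensuring that each ``$\cong$'' is legitimate in the relevant category. The delicate point is that $\cD$ was only defined (and shown to land in $\CK_k$) on $\CM_k$, and $\CC$, $\CC\circ\CT^{\vee}_s$ etc.\ are a priori functors on all of $\CK_k$; one must check that applying $\cD$ to the Bott--Samelson like objects $M'$, $\CT^{\vee}_s\circ\cD(M')$, and so on, stays inside the subcategory where Propositions \ref{theo:dualtrans} and \ref{theo:kipptrans} apply. Concretely, Proposition \ref{theo:dualtrans} is stated for $\cD\circ\CT_s\colon\CM_k\to\CK_k$, so I need $M'\in\CM_k$ (true, since $\CM_k^\circ\subset\CM_k$ by Proposition \ref{q0summand}) and I need $\cD M'$ to be an object on which $\CC\circ\CT^{\vee}_s$ behaves as in Proposition \ref{theo:kipptrans} — but that proposition holds for \emph{all} of $\CK_k$, so this is automatic once we know $\cD M'\in\CK_k$, which is exactly the content of the discussion preceding Proposition \ref{theo:dualtrans}. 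Thus the only genuine bookkeeping is to confirm that the shift functors can be pulled to the outside at each stage (they are exact and commute with $\CT_s$, $\CT^\vee_s$, $\CC$ by inspection of the definitions, and with $\cD$ up to sign by \eqref{dualshift}), after which the induction closes. I expect no conceptual difficulty beyond this, since the three ingredient propositions do all the heavy lifting.
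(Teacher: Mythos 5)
Your proof is correct and follows essentially the same route as the paper: induction on the word length, with Lemma \ref{q0dual} as the base case and Propositions \ref{theo:dualtrans} and \ref{theo:kipptrans} driving the inductive step. One small bookkeeping point: the shift $\{n\}$ does enter the final value of the shift (the paper records $z=-2r-2l(w_0)-2n$), though, as you observe, this does not affect the bare existence statement.
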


\begin{proof}
This proof uses the recursive definition of Bott-Samelson like objects (see \eqref{bottsamlike}) in $\CM_k$. It consists of applying Lemma \ref{q0dual} and inductively making use of Proposition \ref{theo:dualtrans} and Proposition \ref{theo:kipptrans}. The integer $z$ in particular depends on the length of the word in \eqref{bottsamlike} -- it equals 
\begin{equation*}
z= -2r-2l(w_0) -2n.
\end{equation*}
\end{proof}

\subsection{Indecomposables corresponding to the anti-fundamental box}
For the next steps we need the notion of \emph{$\alpha$-strings} in the alcove pattern. Let $\alpha \in R^+$ and $A\in \SA$. Then
\begin{equation*}
\{\alpha \uparrow^n A \mid n \in \ZZ\}\subset \SA
\end{equation*}
is such an $\alpha$-string. Consider a Bott-Samelson like object $M$ in $\CM_k^{\circ}$, i.e.
\begin{equation*}
M=\CT_{s_l}\circ\ldots\circ \CT_{s_1} (Q_0).
\end{equation*}
The intersection of an $\alpha$-string with the support of this object $M$ is \emph{connected}:
\begin{lemma} Let $M$ be a Bott-Samelson like object. For all $\alpha \in R^+$ and $A\in \supp\, M$ there are $k\in \NN$ and $B\in \SA$ such that
\begin{equation*}
\supp\, M \cap \{\alpha \uparrow^n A \mid n \in \ZZ\} =\{B,\alpha \uparrow B,\ldots, \alpha \uparrow^k B\}.
\end{equation*}
\end{lemma}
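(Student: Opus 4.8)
The plan is to prove the statement by the usual three-step induction that underlies every structural fact about Bott-Samelson like objects: verify it for the base object $Q_0$, show that it is preserved by each translation functor $\CT_s$ with $s\in\widehat\CS$, and observe that it is stable under direct sums, direct summands and shifts. Connectedness of an intersection of a subset with an $\alpha$-string means: if $\alpha\uparrow^{a}B$ and $\alpha\uparrow^{c}B$ both lie in the support, then so does $\alpha\uparrow^{b}B$ for every $a\le b\le c$. So the statement to be propagated is: \emph{for every $\alpha\in R^+$, the set $\{n\in\ZZ\mid \alpha\uparrow^{n}A_0\in\supp\,M\}$ is an interval (possibly empty) in $\ZZ$}, for any fixed reference alcove $A_0$ on the string.

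First I would check the base case. For $Q_0$ we have $\supp\,Q_0=\CW$, i.e.\ the alcoves $A_w$ with $w\in\CW$; these are exactly the alcoves in the closure of the ``finite'' box, and I would argue that for each $\alpha\in R^+$ an $\alpha$-string meets this set in a run of consecutive alcoves. Concretely, the alcoves of $\CW$ are those $A$ with $-1<\langle\gamma,v\rangle<1$ after applying an appropriate Weyl element, or more directly: an $\alpha$-string consists of alcoves stacked between consecutive hyperplanes $H_{\alpha,n}$, and $\SA^-_\beta$-type conditions cut out an interval — this is essentially the picture in Figure \ref{fig:cases2}. (A clean way: $A_w\in\CW$ iff $w\in\CW$, and moving along an $\alpha$-string corresponds to multiplying by reflections $s_{\alpha,n}$; the set of $n$ for which the result stays in the finite Weyl group is an interval because $\CW$ is an ``interval'' in each such string — one can cite \cite{humphreys1990reflection} or argue via the length function.)

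Second, and this is where the real work is, I would show $\CT_s$ preserves the interval property. Fix $\alpha\in R^+$, $s\in\widehat\CS$, and a string $\{\alpha\uparrow^n A\}_n$. By definition $\CT_s M(C)=M(\wm{C}{s})\oplus M(\wp{C}{s})$, so $C\in\supp\,\CT_sM$ iff $\wm{C}{s}\in\supp\,M$ or $\wp{C}{s}\in\supp\,M$. I would use Lemma \ref{wallcomb} and Remark \ref{updown} to track how the two maps $C\mapsto\wm{C}{s}$ and $C\mapsto\wp{C}{s}$ interact with the operator $\alpha\uparrow\cdot$ along the string. There are two regimes to separate, exactly as in the definition of $\CT_s$: along the string, either $\w{C}{s}$ has type $\alpha$ (so passing from $\wm{C}{s}$ to $\wp{C}{s}$ is itself a step $\alpha\uparrow$, by Remark \ref{updown}), or it does not (so $\alpha\uparrow\w{C}{s}\ne\w{C}{s}$ and by Lemma \ref{wallcomb}.d) the pair $\{\wm{C}{s},\wp{C}{s}\}$ is carried to $\{\wm{(\alpha\uparrow C)}{s},\wp{(\alpha\uparrow C)}{s}\}$ by $\alpha\uparrow\cdot$). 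In the second regime the two images $\{\wm{(\alpha\uparrow^nC)}{s}\}_n$ and $\{\wp{(\alpha\uparrow^nC)}{s}\}_n$ are each a single $\alpha$-string in $\supp\,M$'s ambient pattern, so by induction the $n$-set where each lies in $\supp\,M$ is an interval, and the union of two intervals that overlap (they do, because consecutive $C$'s share a wall) is again an interval. In the first regime the map $\alpha\uparrow\cdot$ ``folds'' the string: $\wm{C}{s}$ and $\wp{C}{s}$ lie on the same $\alpha$-string of $M$ but with the reflection in between, and I would show the preimage under this fold of an interval is again an interval. The main obstacle is bookkeeping the fold carefully — making sure no gap is created at the fixed wall — and this is exactly what Lemma \ref{wallcomb}.a),c) and the displayed equations in Remark \ref{updown} are designed to handle; I expect the argument to reduce to: if $\alpha\uparrow\w{C}{s}=\w{C}{s}$ then $\wp{C}{s}=\alpha\uparrow\wm{C}{s}$, so the two ``halves'' of the $\CT_sM$-string sit contiguously on the $M$-string, and an interval in $M$'s string pulls back to an interval.

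Finally, the interval property is visibly inherited by direct summands (support only shrinks, and a subset of an interval that is a support of a summand is again an interval — here one uses that the object is special so each $M(C)$ being nonzero is detected componentwise and a summand's support along the string is again an interval because the ``linking'' argument from the proof that $Q_0$ is indecomposable shows support-along-a-string of an indecomposable summand is connected), by direct sums of Bott-Samelson like objects sharing a common string segment, and by shifts (which do not change support at all). Assembling these three steps gives the lemma, with $B=\alpha\uparrow^{-k'}A$ where $k'$ is the largest integer with $\alpha\uparrow^{-k'}A\in\supp\,M$ and $k$ the length of the resulting interval.
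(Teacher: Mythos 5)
Your overall strategy (induction along the word defining the Bott--Samelson like object, base case $Q_0$, propagation through $\CT_s$) is the same as the paper's, and your treatment of the ``folded'' regime $\alpha\uparrow\w{C}{s}=\w{C}{s}$ is sound: there the string is partitioned into consecutive pairs $\{\wm{C}{s},\wp{C}{s}\}$ and an interval pulls back to an interval. The gap is in the other regime. When $\alpha\uparrow\w{C}{s}\neq\w{C}{s}$, the condition $\alpha\uparrow^{n}A\in\supp\,\CT_sM'$ becomes $\alpha\uparrow^{n}D\in\supp\,M'$ or $\alpha\uparrow^{n}E\in\supp\,M'$, where $D=\wm{A}{s}$, $E=\wp{A}{s}$ lie on two \emph{distinct} $\alpha$-strings; by induction each index set $I_D$, $I_E$ is an interval, but you conclude that $I_D\cup I_E$ is an interval because the two intervals ``overlap, since consecutive $C$'s share a wall.'' That justification does not work: the fact that $\alpha\uparrow^{n}D$ and $\alpha\uparrow^{n}E$ share a wall for each $n$ says nothing about the relative position of $I_D$ and $I_E$ inside $\ZZ$, and for a general object of $\CK_k$ whose string-supports are intervals the union can perfectly well have a gap (e.g.\ $I_D=\{0\}$, $I_E=\{5\}$). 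The missing ingredient, which the paper uses explicitly at exactly this point (``let $\wp{C}{s}\in\supp\,M'$, then so is $s_\alpha\wp{C}{s}\in\supp\,M'$''), is that the support of a Bott--Samelson like object is stable under the left action of the finite Weyl group --- a consequence of the rank invariance $\rk\,M'(A)=\rk\,M'(w.A)$ proved by the same induction. This $s_\alpha$-stability forces each nonempty $I_D$, $I_E$ to be symmetric about the half-integer indexing the hyperplane $H_{\alpha,0}$ on its string, hence to contain the two integers adjacent to that centre; since $D$ and $E$ sit in $\alpha$-strips differing by at most one, the two intervals then genuinely overlap. Without importing this symmetry your argument does not close.

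A secondary point: your final paragraph claims the interval property passes to direct summands via the linking argument. This is both unnecessary --- Bott--Samelson like objects are by definition the objects $\CT_{s_r}\circ\cdots\circ\CT_{s_1}(Q_0)\{n\}$ themselves, with no summands taken --- and unjustified: knowing that linked alcoves land in the same summand does not show that consecutive alcoves of $\supp\,M$ along a string are linked, which is precisely what the paper's subsequent Lemma \ref{link} establishes only under extra hypotheses ($|\supp\,M\cap\text{string}|=2$ and $\rk\,M(A)=1$). You should drop that step and restrict the statement to Bott--Samelson like objects as given.
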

\begin{proof}
This is true for $Q_0$ since for all $\alpha \in R^+$ we have $\supp\, Q_0 =\SA_-^{\alpha} \,\dot\cup\,\SA_+^{\alpha}$. If $A\in \SA_-^{\alpha}$ then 
\begin{equation*}
\supp\, Q_0 \cap \{\alpha \uparrow^n A \mid n \in \ZZ\}=\{A, \alpha \uparrow A\}.
\end{equation*}
If $A\in \SA_+^{\alpha}$ then 
\begin{equation*}
\supp\, Q_0 \cap \{\alpha \uparrow^n A \mid n \in \ZZ\}=\{\alpha \downarrow A, A\}.
\end{equation*}
Assume the claim holds for a Bott-Samelson like object $M'$. Consider now $M=\CT_s M'$ and let $C, \alpha \uparrow^k C\in \supp\,M$. Then the following two intersections are non-empty
\begin{align*}
\{\wm{C}{s},\wp{C}{s}\}\cap \supp \, M'\neq \emptyset,\\
\{\wm{(\alpha\uparrow^k C)}{s},\wp{(\alpha\uparrow^k C)}{s}\}\cap \supp \, M'\neq \emptyset.
\end{align*}
Without loss of generality let $\wp{C}{s} \in \supp \, M'$ then so is $s_{\alpha}\wp{C}{s}\in \supp \, M'$ where both live in the same $\alpha$-string. By induction hypothesis the intersection of the $\alpha$-string through $\wp{C}{s}$ with $\supp \, M' \subset \supp \, M$ is connected. Translating this connected $\alpha$-string by $s$ gives another one between $\wm{C}{s}$ and $\w{(s_{\alpha}\w{C}{s})}{s}_{\mp}$ in the support of $M$. 
If $\alpha\uparrow^k C$ was not in the translated $\alpha$-string yet then start the same procedure with $\w{\alpha\uparrow^k C}{s}_{\pm}$. We obtain
\begin{equation*}
\{C,\ldots, \alpha \uparrow^k C\}\subseteq \supp\, M \cap \{\alpha \uparrow^n A \mid n \in \ZZ\}
\end{equation*}
and with the finiteness of the support of $M$ we receive the claim.
\end{proof}

We will now consider the special case when the support of a Bott-Samelson like object intersects an $\alpha$-string in exactly two alcoves.
\begin{lemma}\label{link}
Let $M \in \CM^{\circ}_k$ be a Bott-Samelson like object and $B \in \SA$ with 
\begin{equation*}
|\supp\, M \cap \{\alpha \uparrow^n B \mid n \in \ZZ\}| =2. 
\end{equation*}
Let $A\in \SA$ be such that
\begin{equation*}
\supp\, M \cap \{\alpha \uparrow^n B \mid n \in \ZZ\}=\{A,\alpha \uparrow A\}. 
\end{equation*}
If $\rk\, M(A)=1$, then $M$ links the alcoves $A$ and $\alpha\uparrow A$. In particular, $\{A,\alpha \uparrow A\}\subseteq \supp \, N$ where $N$ is the indecomposable summand of $M$ with $A\in \supp \, N$. 
\end{lemma}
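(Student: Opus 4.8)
The plan is to separate the final assertion into the linking statement and the ``in particular'' clause, the latter being a formal consequence of the former; so the substance lies in showing that $M$ links $A$ and $\alpha\uparrow A$.

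For the formal part: write $M=\bigoplus_i N_i$ as a direct sum of indecomposables. Since $\rk\,M(A)=1$ there is a unique index $i_0$ with $N_{i_0}(A)\neq 0$, and $N=N_{i_0}$. For every $i\neq i_0$ we have $N_i(A)=0$, hence $N_i(A,\alpha)\subseteq 0\oplus N_i(\alpha\uparrow A)$. Thus, if $N$ did not link $A$ and $\alpha\uparrow A$, say $N(A,\alpha)=U\oplus V$ with $U\subseteq N(A)\oplus 0$ and $V\subseteq 0\oplus N(\alpha\uparrow A)$, then $M(A,\alpha)=U\oplus\bigl(V\oplus\bigoplus_{i\neq i_0}N_i(A,\alpha)\bigr)$ would exhibit $M$ as not linking $A$ and $\alpha\uparrow A$. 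Contrapositively, once $M$ links $A$ and $\alpha\uparrow A$, so does $N$, whence $N(\alpha\uparrow A)\neq 0$, i.e.\ $\alpha\uparrow A\in\supp\,N$.

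To prove the linking I would invoke Lemma~\ref{verma:imagel}, whose hypothesis $M\in\CM_k^{\circ}$ holds because $M$ is Bott-Samelson like. The key observation is that $\alpha\downarrow A\notin\supp\,M$, so $M(\alpha\downarrow A)=0$; hence $M(\alpha\downarrow A,\alpha)$ is canonically an $S^{\alpha}_k$-submodule of $M(A)$, and $\pr_A$ restricts to an isomorphism of $M(\alpha\downarrow A,\alpha)$ onto that submodule. Applying \eqref{verma:imagedown} to $A$ gives $\pr_A[M(A,\alpha)]=M(\alpha\downarrow A,\alpha)\cap M(A)=M(\alpha\downarrow A,\alpha)$ (the intersection being everything since $M(\alpha\downarrow A)=0$), while \eqref{verma:imageup}, which is an equality here, gives $M(A,\alpha)\cap M(A)=\alpha\,M(\alpha\downarrow A,\alpha)$. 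By Lemma~\ref{basics:dense}, $M(\alpha\downarrow A,\alpha)\otimes_{S^{\alpha}_k}S^{\emptyset}_k\cong M(\alpha\downarrow A)\oplus M(A)=M(A)\neq 0$ (using $\rk\,M(A)=1$), so $M(\alpha\downarrow A,\alpha)\neq 0$; it is free over $S^{\alpha}_k$ by Lemma~\ref{fingen:torfre}, and since $\alpha$ is not a unit in $S^{\alpha}_k$ this forces $\alpha\,M(\alpha\downarrow A,\alpha)\subsetneq M(\alpha\downarrow A,\alpha)$. Finally, if $M(A,\alpha)$ decomposed as $[M(A,\alpha)\cap M(A)]\oplus[M(A,\alpha)\cap M(\alpha\uparrow A)]$, applying $\pr_A$ would yield $\pr_A[M(A,\alpha)]=M(A,\alpha)\cap M(A)$, i.e.\ $M(\alpha\downarrow A,\alpha)=\alpha\,M(\alpha\downarrow A,\alpha)$, a contradiction. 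Hence $M$ links $A$ and $\alpha\uparrow A$.

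The delicate point is the bookkeeping with the symbols $M(\cdot,\alpha)\cap M(\cdot)$ and the projections: one must check that the hypothesis $\alpha\downarrow A\notin\supp\,M$ is exactly what (i) collapses the right-hand side of \eqref{verma:imagedown} to all of $M(\alpha\downarrow A,\alpha)$ and (ii) makes $\pr_A$ on $M(\alpha\downarrow A,\alpha)$ an isomorphism onto its image, so that the two forms of Lemma~\ref{verma:imagel} describe the ``$A$-part'' of $M(A,\alpha)$ once as $M(\alpha\downarrow A,\alpha)$ and once as $\alpha\,M(\alpha\downarrow A,\alpha)$; the mismatch between these is the obstruction to splitting. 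With this route neither the connectedness lemma nor an induction on the length of the Bott-Samelson word is needed (one could alternatively run such an induction, tracking the relevant $\alpha$-string through the three cases in the definition of $\CT_s$ via Lemma~\ref{wallcomb}, but that is more laborious). The remaining ingredients --- finite generation and freeness of the modules and $\alpha$ being a non-unit in $S^{\alpha}_k$ --- are routine.
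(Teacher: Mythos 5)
Your argument is correct and follows essentially the same route as the paper: both exploit $\alpha\downarrow A\notin\supp\,M$ to view $M(\alpha\downarrow A,\alpha)$ inside $M(A)$ and then play the two halves of Lemma~\ref{verma:imagel} (the second with equality, since $M\in\CM_k^{\circ}$) against each other to rule out a splitting of $M(A,\alpha)$. The only cosmetic difference is that the paper pins down $M(\alpha\downarrow A,\alpha)=\alpha^l S^{\alpha}_k$ explicitly using $\rk\,M(A)=1$, whereas you argue abstractly via freeness and $\alpha$ being a non-unit in $S^{\alpha}_k$ (and you spell out the formal reduction for the ``in particular'' clause, which the paper leaves implicit).
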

\begin{proof}
Since $\alpha \downarrow A \notin \supp \, M$, the $S^{\alpha}_k$-module $M(\alpha \downarrow A,\alpha)$ must be of the form $\alpha^l  S^{\alpha}_k \hookrightarrow 0\oplus S_k^{\emptyset}$. We obtain
\begin{align*}
M(\alpha\downarrow A,\alpha) \cap M(A) =\alpha^l S^{\alpha}_k,\\
\pr_A [M(\alpha \downarrow A, \alpha)] =\alpha^l S^{\alpha}_k.
\end{align*}
But $M \in \CM_k^{\circ}$ and we apply Lemma \ref{verma:imagel}:
\begin{align*}
\pr_A [M(A,\alpha)]&=\alpha^l  S^{\alpha}_k,\\
M(A,\alpha) \cap M(A)&=\alpha^{l+1}  S^{\alpha}_k.
\end{align*}
This is only true if $M$ links $A$ to $\alpha \uparrow A$. 
\end{proof}

Our goal is to extend the self-duality we have seen in the context of Bott-Samelson like objects in $\CM_k^{\circ}$ to certain indecomposable objects. The \emph{anti-fundamental box} $\Pi^-$ of $\SA$ consists of those alcoves that live in the strips
\begin{equation*}
A\subset H^-_{\alpha,0}\cap H^+_{\alpha,-1}
\end{equation*}
for all $\alpha \in \Delta$. For each $A\in\SA$ there is exactly one $\lambda \in X^{\vee}$ such that $\lambda + A \in \Pi^-$.

\begin{theorem}\label{selfdualanti}
Let $A_w \in \Pi^-$ and $(s_1,\ldots, s_l, s_{l+1},\ldots, s_j)$ be a reduced expression of $w\in \widehat{\CW}$ with $w_0=s_1 \ldots s_l$. Define 
\begin{equation*}
M=[\CT_{s_j}\circ \ldots \circ \CT_{s_{1}}]P_0.
\end{equation*}
Let $N$ be the up to isomorphism unique indecomposable direct summand of $M$ with $A_w\in \supp\, N$. Then 
\begin{equation*}
\CC \circ \cD (N)\cong N\{-2l(w)\}.
\end{equation*}
\end{theorem}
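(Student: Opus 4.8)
The strategy is to combine the self-duality of Bott--Samelson like objects (Theorem \ref{dual:mainthm}) with the fact, established in Proposition \ref{q0summand}, that $Q_0$ sits inside $\CT_{s_l}\circ\ldots\circ\CT_{s_1}(P_0)$ as a direct summand. Since $w_0 = s_1\ldots s_l$ is reduced, this makes $M = [\CT_{s_j}\circ\ldots\circ\CT_{s_1}]P_0$ contain $[\CT_{s_j}\circ\ldots\circ\CT_{s_{l+1}}]Q_0$ as a direct summand, because applying translation functors to a direct-summand inclusion yields again a direct-summand inclusion. Thus $M$ contains a Bott--Samelson like object $M' := [\CT_{s_j}\circ\ldots\circ\CT_{s_{l+1}}]Q_0 \in \CM_k^\circ$ as a direct summand, and by Theorem \ref{dual:mainthm} we have $\CC\circ\cD(M')\cong M'\{z\}$ for $z = -2(j-l) - 2l(w_0)$, which (since $l(w) = j$, i.e. $j - l = l(w) - l(w_0)$) equals $-2l(w) - 2l(w_0) + 2l(w_0) + \ldots$; one checks $z = -2l(w)$ using $z = -2r - 2l(w_0) - 2n$ with $r = j-l$, $n = 0$, and the shift bookkeeping from Proposition \ref{theo:dualtrans} applied $l$ times to reconstruct $M'$ from $M$.

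The key point is then an indecomposability/support argument. First I would show that the indecomposable summand $N$ of $M$ containing $A_w$ is in fact a summand of $M'$: since $M'$ is a summand of $M$, every indecomposable summand of $M$ either lies in $M'$ or in a complement, and I claim $A_w \in \supp M'$. This should follow from the recursive support lemma and the fact that $A_w \in \Pi^-$ forces $A_w$ to be reached from $w_0 . A_e = A_{w_0}$ by a chain of $\uparrow$-moves realized by the $s_{l+1},\ldots,s_j$; since $A_{w_0}\in\supp Q_0$, one gets $A_w\in\supp M'$. Uniqueness of $N$ comes from $\rk M(A_w) = \rk M'(A_w) = 1$ (translation functors applied to $Q_0$, whose $A_{w_0}$-component has rank one, keep this rank one along the $\uparrow$-path because each $\CT_s$-step at an alcove not on a relevant wall simply copies a rank-one module; the hypothesis $A_w\in\Pi^-$ is exactly what guarantees no "doubling" occurs). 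So $N$ is the unique indecomposable summand of $M'$ with $A_w$ in its support.

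Now the self-duality $\CC\circ\cD(M')\cong M'\{-2l(w)\}$ descends to indecomposable summands: the functor $\CC\circ\cD$ is an (anti)equivalence-type operation that takes indecomposables to indecomposables (since $\cD$ is a duality and $\CC$ is an equivalence of categories, being induced by the group element $w_0$ which is invertible), so the decomposition of $M'\{-2l(w)\}$ into indecomposables is, up to shift, the $\CC\circ\cD$-image of the decomposition of $M'$. It remains to match $N$ with its own image. For this I would track supports: $\CC\circ\cD$ sends an object with support $\Sigma$ to one with support $w_0.\Sigma$ (from the definition of $\CC$ and the fact that $\cD$ preserves support). The hypothesis $A_w\in\Pi^-$ should force $w_0.\supp N = \supp N$ — or more precisely, should force the unique indecomposable summand of $\CC\circ\cD(M')$ whose support contains $A_w$ to be $\CC\circ\cD(N)$ on one hand and (by uniqueness in $M'\{-2l(w)\}$) to be $N\{-2l(w)\}$ on the other. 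This is where Lemma \ref{link} enters: it guarantees that the linking structure forcing $A_w$ into a particular indecomposable component is symmetric under the duality, so that the summand of $M'$ carrying $A_w$ and the summand of $\CC\circ\cD(M')$ carrying $A_w$ correspond.

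**The main obstacle.** The delicate step is the support/uniqueness matching in the last paragraph: one must argue that $\CC\circ\cD(N)$ is again indecomposable with $A_w$ in its support, and that within the common refinement $M'\{-2l(w)\}\cong\bigoplus(\text{indec})$ there is only \emph{one} indecomposable summand containing $A_w$, so that it must coincide with $N\{-2l(w)\}$. The hypothesis $A_w\in\Pi^-$ is used twice — once to keep $\rk M(A_w)=1$ so the summand through $A_w$ is unique, and once (via $w_0.\Pi^- = \Pi^-$, as $\Pi^-$ is $w_0$-stable up to translation) to ensure the duality $\CC\circ\cD$ does not move $A_w$ out of the relevant support class. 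Making the bookkeeping of the shift $-2l(w)$ precise (reconciling $z = -2r-2l(w_0)-2n$ from Theorem \ref{dual:mainthm} with the $l$-fold $\{-2\}$-shifts coming from Proposition \ref{theo:dualtrans} as one "unwraps" $M$ back to $M'$) is the remaining routine-but-error-prone computation.
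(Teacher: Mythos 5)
Your overall strategy matches the paper's: reduce to the Bott--Samelson like object $M'=[\CT_{s_j}\circ\ldots\circ\CT_{s_{l+1}}]Q_0$ sitting inside $M$, invoke Theorem \ref{dual:mainthm} to get $\CC\circ\cD(M')\cong M'\{-2l(w)\}$, and then match indecomposable summands via their supports. The shift bookkeeping also comes out right ($z=-2(j-l)-2l(w_0)=-2j=-2l(w)$). But there is a genuine gap exactly at the step you flag as ``the main obstacle,'' and the two heuristics you offer in its place do not close it. Since $\supp\,\CC\circ\cD(N')=w_0.\supp\,N'$ for every summand $N'$ of $M'$, the indecomposable summand of $\CC\circ\cD(M')$ whose support contains $A_w$ is $\CC\circ\cD(N'')$ where $N''$ is the summand of $M'$ with $w_0.A_w\in\supp\,N''$. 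So everything reduces to the concrete claim $w_0.A_w\in\supp\,N$, i.e.\ that $A_w$ and $w_0.A_w$ lie in the support of the \emph{same} indecomposable summand of $M'$. Your appeal to ``the linking structure being symmetric under the duality'' is not an argument for this, and your parenthetical ``$w_0.\Pi^-=\Pi^-$ up to translation'' is false as stated: applying $w_0$ sends the strip $H^-_{\alpha,0}\cap H^+_{\alpha,-1}$ to $H^+_{-w_0(\alpha),0}\cap H^-_{-w_0(\alpha),1}$, so $w_0.\Pi^-$ is the \emph{fundamental} box, not the anti-fundamental one; in particular $w_0.A_w\neq A_w$ and nothing about $\Pi^-$-stability is available.

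What is actually needed (and what the paper supplies) is an explicit chain of linked alcoves from $A_w$ to $w_0.A_w$ inside $\supp\,M'$: writing $w_0=s_1\cdots s_l$ reduced, one shows each step $s_1\cdots s_{k-1}.A_w\to s_1\cdots s_k.A_w$ equals $\beta_k\uparrow(s_1\cdots s_{k-1}.A_w)$ for the positive root $\beta_k=s_1\cdots s_{k-1}(\alpha_k)$ --- here the hypothesis $A_w\in\Pi^-$ is used to place each intermediate alcove in the strip $H^-_{\beta_k,0}\cap H^+_{\beta_k,-1}$ --- then one shows, using the $\CW$-invariance of ranks $\rk\,M'(A)=\rk\,M'(u.A)$ and the connectedness of $\alpha$-string supports, that each relevant $\beta_k$-string meets $\supp\,M'$ in exactly two alcoves of rank one, so that Lemma \ref{link} applies at every step and forces both endpoints into the same indecomposable summand. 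Without this construction your proof asserts the conclusion of the hard step rather than proving it; the rest of your argument is sound but rests on it.
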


\begin{proof}
With the proof of Proposition \ref{q0summand} the object $Q_0$ is a direct summand of $[\CT_{s_l}\circ \ldots \circ \CT_{s_{1}}]P_0$ and so $[\CT_{s_j}\circ \ldots \circ \CT_{s_{l+1}}]Q_0=M'$ is a summand of $M$. Hence, the unique indecomposable direct summand of $M'$ with $A_w$ in its support is isomorphic to $N$, too. The rank in an alcove component of a Bott-Samelson like object is invariant under the action of the finite Weyl group which can be seen by induction. First of all, it is
\begin{equation*}
\rk\, Q_0(A_x) =\begin{cases}
1 & \text{if }x\in \CW,\\
0 & \text{else}
\end{cases}
\end{equation*}
and the induction step is achieved with the fact that $\{w.\wm{A}{s},w.\wp{A}{s}\}=\{\wm{(w.A)}{s}, \wp{(w.A)}{s}\}$. Hence for all $w\in \CW$ and $A \in \SA$ we have
\begin{equation}\label{rankinvariant}
\rk\, M'(A) =\rk\, M'(w.A).
\end{equation}
As $(s_1,\ldots,s_j)$ is a reduced expression of $w$, $\rk\, M(A_w)= \rk\, M'(A_w)=1$ and hence, $\rk\, M'(w_0. A_w)=1$. For this reason, there is a unique indecomposable summand $N_{\CC}$ of $M'$ with $w_0. A_w \in \supp\, N_{\CC}$. Because of Theorem \ref{dual:mainthm} and $\supp \, \CC N=w_0.\, \supp\, N$ there is 
\begin{equation*}
\CC \circ \cD (N)\cong N_{\CC}\{-2l(w)\}.
\end{equation*}
In the remaining part of the proof we will show that $w_0 A_w \in \supp \, N$. 

\begin{figure}[htbp] 
  \centering
  \includegraphics[width=0.7\textwidth]{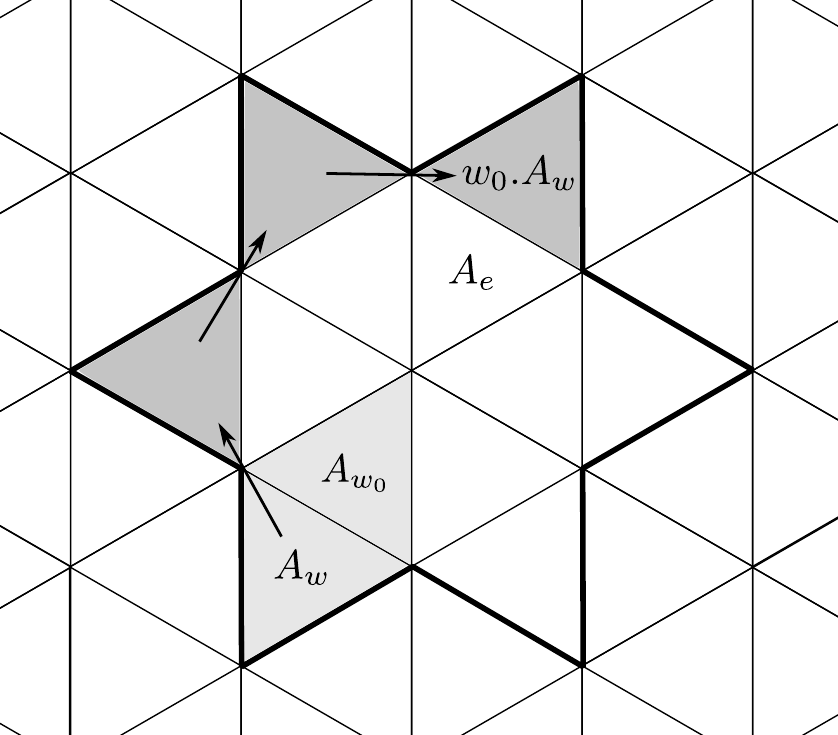}
  \caption{The path $A_w \to w_0.A_w$, the support of $M$ is framed}
  Color key \quad
\includegraphics{artcp2.pdf}\- Alcoves in $\Pi^-$ \quad
\includegraphics{artcp1.pdf}\- Constructed path $A_w \to w_0.A_w$
  \label{fig:path}
\end{figure}

We would like to construct a path via the $(\cdot \uparrow)$-operation through the support of $M'$ between the alcoves $A_w$ and $w_0 A_w$. For an example of type $\tilde{A}_2$, see Figure \ref{fig:path}. Recall that $(s_1,\ldots, s_l)$ was a reduced expression of $w_0$. Consider
\begin{equation}\label{path}
A_{w} \to s_1 .A_{w} \to s_1 s_{2} .A_{w} \to \ldots \to s_1 \ldots s_l .A_{w}=w_0 .A_w.
\end{equation}
As the alcove $A_w$ is in the anti-fundamental box, it lives in
\begin{equation*}
A_w \subset H_{\alpha,0}^-\cap H_{\alpha,-1}^+
\end{equation*}
for all simple roots $\alpha\in \Delta$. Hence for each $\alpha\in \Delta$ and $k\in {1,\ldots,l}$
\begin{equation*}
(s_1 \ldots s_{k-1})A_w \subset H_{s_1 \ldots s_{k-1}(\alpha),0}^-\cap H_{s_1 \ldots s_{k-1}(\alpha),-1}^+,
\end{equation*}
in particular when $\alpha$ corresponds to the simple root of the simple reflection $s_k$ noted by $\alpha_k$. Furthermore, it is known that there is a positive root $\beta_k \in R^+$ such that
\begin{equation*}
s_1 \ldots s_{k-1} s_k =s_{\beta_k} s_1\ldots s_{k-1}.
\end{equation*}
From this follows 
\begin{equation}\label{more:dual}
s_1 \ldots s_{k-1} (\alpha_k) \in \{\pm \beta_k\}.
\end{equation}
Since this describes the walk from $A_{w_0}$ to $A_e$ at the same time, all roots in \eqref{more:dual} are positive. 
We have got $s_1 \ldots s_{k-1} (\alpha_k) = \beta_k$ and on that account 
\begin{equation*}
s_1 \ldots s_{k-1}.A_w \subset H_{\beta_k,0}^-\cap H_{\beta_k,-1}^+.
\end{equation*}
Therefore
\begin{equation*}
s_1\ldots s_k.A_w=s_{\beta_k}s_1\ldots s_{k-1}.A_w=\beta_k \uparrow (s_1\ldots s_{k-1}.A_w).
\end{equation*}
So \eqref{path} indeed yields the desired path, represented by
\begin{equation*}
A_{w} \to \beta_1\uparrow A_{w} \to  \ldots \to  \beta_l\uparrow \ldots \beta_1 \uparrow A_{w}=w_0 A_w.
\end{equation*}

For any simple root $\alpha \in \Delta$ we would like to describe the $\alpha$-string through $A_w$ intersected with the support of $M'$.  Certainly, $\alpha \uparrow A_w=s_{\alpha} A_w \in \supp \, M'$. On the other side $\alpha \downarrow A_w \notin \supp \, M'$ as in the anti-fundamental chamber 
\begin{equation*}
l(\alpha \downarrow A_w) \geq l(A_w).
\end{equation*}
The $\alpha$-string of $A_w$ in $M'$ additionally does not contain $s_{\alpha}(\alpha \downarrow A_w)=\alpha \uparrow^2 A_w$. This leads to 
\begin{equation*}
 \supp \,M' \cap \{\alpha \uparrow^z A_w\mid z \in \ZZ\}=\{A_w, \alpha \uparrow A_w\}.
\end{equation*} 
This statement can be extended to all alcoves in the path we would like to study. Applying \eqref{rankinvariant} in shape of $w=s_1\ldots s_n$ with $n\in\{1,\ldots, l\}$ shows that the $\beta_k$-strings in $s_1\ldots s_{k-1}A_w$ all own exactly two elements. But now we can apply Lemma \ref{link} inductively and receive that $w_0 .A_w \in \supp \,N$. 
\end{proof}

\begin{remark}
So far, we do not have an intrinsic classification of the indecomposable objects in $\CM_k$ nor in $\CM_k^{\circ}$. However,  the representation theoretical side says that it is enough to know the self-duality of objects as in the previous Theorem  \ref{selfdualanti}. Other indecomposable objects in $\CM_k^{\circ}$ are received by translating those with respect to the alcove pattern and hence are tilting self-dual up to translating inside the alcove pattern. 
\end{remark}

\subsection*{Acknowledgements}
I would like to thank Peter Fiebig for very helpful discussions and ideas and for his useful comments on previous versions of this paper. This project is supported by the DFG priority program 1388.

\bibliography{steglich}
\bibliographystyle{alpha}

\end{document}